\documentclass[11pt]{amsart}
\usepackage{a4wide}
\usepackage{amsmath,amssymb}
\usepackage{amsopn}
\usepackage{epsfig}
\usepackage{amsfonts}
\usepackage{latexsym}
\usepackage{graphicx}
\usepackage{calrsfs}
\usepackage{tikz}
\usepackage{enumerate}
\usepackage{dsfont}
\usepackage{cite}
\usepackage[colorlinks,linkcolor=blue,anchorcolor=blue,citecolor=blue]{hyperref}

\newtheorem{theorem}{Theorem}[section]
\newtheorem{lemma}[theorem]{Lemma}
\newtheorem{proposition}[theorem]{Proposition}

\theoremstyle{definition}

\newtheorem{example}[theorem]{Example}

\theoremstyle{remark}
\newtheorem{remark}[theorem]{Remark}

\numberwithin{equation}{section}

\newcommand{\R}{\ensuremath{\mathbb{R}}}
\newcommand{\N}{\ensuremath{\mathbb{N}}}

\newcommand{\set}[1]{\left\{#1\right\}}

\newcommand{\f}{\infty}

\newcommand{\lf}{\lfloor}
\newcommand{\rf}{\rfloor}

\begin{document}

\title[ Invariant measures, subshifts of finite type and matching property]{Negative $\beta$-transformations: invariant measures, subshifts of finite type and matching property. }

\author[Y. Huang]{Yan Huang}
\address[Y. Huang]{School of Mathematics and Statistics, Wuhan University, Wuhan 430072, Hubei, People's Republic of China.}
\email{yanhuangyh@126.com}

\author[Y. Sun]{Yun Sun}
\address[Y. Sun]{Center for Mathematical Sciences, School of Mathematics and Statistics, Wuhan University of Technology, Wuhan 430070, Hubei, People's Republic of China.}
\email{sunyun@whut.edu.cn}
\date{\today}

\begin{abstract} We study the negative beta transformations $T_{-\beta}:=-\beta x +\lf\beta x\rf+1$ for $x\in(0,1]$ and $\beta>1$. We present a complete characterization of pairs of dstinct non-integers with the same $T_{-\beta}$-invariant measure: for two non-integers $\beta_1 ,\beta_2 >1$, the invariant measures of negative $\beta$-transformation  coincide if and only if $\beta_1$ is the root of equation $x^2-qx-p=0$, where $p,q\in\mathbb{N}$ with $p\leq q$, and $\beta_2 = \beta_1 + 1$. Furthermore, we show that $T_{-\beta}$ has matching property for all  $\beta$ being generalized multinacci numbers.
We also prove that the set of simple $-\beta$ numbers, whose $-\beta$-shifts are subshifts of finite type, is dense in the parameter interval $(1,\infty)$.
\end{abstract}

\keywords{negative $\beta$-transformation, invariant measure, subshift of finite type, simple $\beta$ number, matching property. }

\subjclass[2020]{28D05}

\maketitle

\section{Introduction}\label{sec:Introduction}

For a real number $\beta > 1$, the $\beta$-transformation is defined by
\[
T_\beta : [0, 1) \to [0, 1), \quad x \mapsto \beta x - \lfloor \beta x \rfloor.
\]
R\'enyi \cite{Renyi-1957} and Parry \cite{Parry-1960} used it to represent real numbers in base $\beta$, generalizing expansions in integer bases. The (greedy) $\beta$-expansion of $x \in [0, 1)$ is
$$
x = \frac{\lfloor \beta x \rfloor}{\beta} + \frac{\lfloor \beta T_\beta(x) \rfloor}{\beta^2} + \frac{\lfloor \beta T_\beta^2(x) \rfloor}{\beta^3} + \cdots.
$$
\par In this paper, we study the $(-\beta)$-transformations (negative $\beta$-transformations)
 \begin{equation}\label{eq:def-beta}
T_{-\beta}:(0,1] \to(0,1], \quad
x\mapsto-\beta x +\lf\beta x\rf+1.
\end{equation}
    There are several results concerning $T_{-\beta}$, see references \cite{Steiner-2013,Ito-Sadahiro-2009,Liao-Steiner-2012}. Similar to the greedy $\beta$-expansion, for any real $\beta > 1$, we can obtain  digit sequences   by $T_{-\beta}$ recursively as follows. For any $x\in(0,1]$ and $n \ge 1$, let \begin{equation}\label{eq:def-digit}
d_{1}:=d_{-\beta,1}(x)=\lf\beta x\rf+1\quad \text{and} \quad  d_n :=d_{-\beta,n}(x)=d_{-\beta,1}(T_{-\beta}^{n-1}(x))=\lf\beta T_{-\beta}^{n-1}(x)\rf+1.\end{equation}
Then $x$ can be express as
\begin{equation}\label{eq:beta-expansion}
x=\sum_{n=1}^\infty \frac{-d_n}{(-\beta)^n}=\frac{d_1}{\beta}-\frac{d_2}{\beta^2}+\ldots,
\end{equation}
and the infinite word $(d_n)_{n\geq1}$ is called the $(-\beta)$-expansion of $x$. We denote the $(-\beta)$-shift $S_{-\beta}\subset \{1,\ldots,\lf \beta\rf +1\}^\N$ by the collection of all infinite words $(d_n)_{n\geq1}$.
For any $\beta>1$, $T_{-\beta}$ induces a map $\pi_{-\beta}$ defined by
\begin{equation}\label{eq:def-pi}
\pi_{-\beta}:(0,1]\to S_{-\beta};\quad  x\mapsto (d_n)_{n\geq1}, \text{ where }x= \sum_{n=1}^\infty \frac{-d_n}{(-\beta)^n}.
\end{equation}

For any non-integer $\beta>1$, it's well-known that the Lebesgue measure is no longer $T_\beta$-invariant, but R\'{e}nyi \cite{Renyi-1957} proved that there exists a unique $T_\beta$-invariant Borel probability measure $\nu_\beta$ equivalent to Lebesgue measure.
Later, Parry \cite{Parry-1960} gave an explicit formula of the density function of $\nu_\beta$.
The measure $\nu_\beta$ is called the \emph{R\'{e}nyi-Parry measure} for $\beta$-transformation.
For any integer $\beta >1$, it's clear that $\nu_\beta$ is the Lebesgue measure.
The non-integer $(-\beta)$-transformations were first introduced by Ito and Sadahiro \cite{Ito-Sadahiro-2009}. Note that they considered a $(-\beta)$-transformation, defined by
$$
G_{-\beta} (x) :\left[-\frac{\beta}{\beta+1},\frac{1}{\beta+1}\right)\to \left[-\frac{\beta}{\beta+1},\frac{1}{\beta+1}\right),\quad x\to-\beta x -\left\lf \frac{\beta}{\beta+1}-\beta x\right \rf.
$$
They also proved that $G_{-\beta}$ has an unique invariant measure absolutely continuous with respect to Lebesgue measure and gave formula of the density function, which is
$$
g_{-\beta}(x) := \sum_{ \substack{n\geq 0, \ x \geq  T_{-\beta}^n (-\frac{\beta}{\beta+1})}} \frac{1}{(-\beta)^n},\quad  x \in \left[-\frac{\beta}{\beta+1},\frac{1}{\beta+1}\right).
$$
 Liao and Steiner \cite{Liao-Steiner-2012} gave a natural modification of negative $(-\beta)$-transformation and studies some properties of negative $\beta$-transformations. They stated that $G_{-\beta}$ is conjugate to $T_{-\beta}$ through the conjugacy function  $\phi(x)=\frac{1}{\beta+1}-x$. Then, for any $\beta>1$, $T_{-\beta}$ has an unique invariant measure absolutely continuous with respect to Lebesgue measure and  the density function is
\begin{equation}\label{eq:nonor-den}
h_{-\beta}(x) := \sum_{\substack{n\geq 0, \ T_{-\beta}^n (1)\geq x}} \frac{1}{(-\beta)^n},\quad  x \in(0,1].
\end{equation}
Denote the invariant probability measure  of $T_{-\beta}$ by $\nu_{-\beta}$, and the density function is
\begin{equation}\label{eq:nor-density}
\widetilde{h}_{-\beta}(x) := \frac{1}{K_{-\beta}}\sum_{\substack{n\geq 0, \  T_{-\beta}^n (1)\geq x}} \frac{1}{(-\beta)^n},\quad  x \in (0,1],
\end{equation}
where the summation is over all $n \ge 0$ satisfying $T_{-\beta}^n (1)>x$, and $K_{-\beta}$ is the normalization constant given by
$$
K_{-\beta} :=\int_{(0,1]}h_{-\beta}d\lambda = \sum_{n=0}^{\f} \frac{T_{-\beta}^n (1)}{(-(\beta)^n}.
$$

A \emph{Pisot number} is an algebraic integer greater than $1$ whose algebraic conjugates are of modulus strictly less than $1$.
Two positive real numbers $a,b>0$ are said to be \emph{multiplicatively independent}, denoted by $a\nsim b$, if $\log a / \log b \notin \mathbb{Q}$.
Hochman and Shmerkin \cite{Hochman-Shmerkin-2015} proved the following measure rigidity result for $\beta$-trans\-for\-ma\-tions.

\begin{theorem} {\rm({\cite[Corollary 1.11]{Hochman-Shmerkin-2015}}) }
  Let $\beta_1, \beta_2 > 1$ with $\beta_1\nsim \beta_2$ and $\beta_1$ a Pisot number.
  If $\mu$ is jointly invariant under $T_{\beta_1},T_{\beta_2}$, and if all ergodic components of $\mu$ under $T_{\beta_2}$ have positive entropy, then $\mu$ is the common R\'{e}nyi-Parry measure for $\beta_1$ and $\beta_2$; in particular, $\mu$ is absolutely continuous.
\end{theorem}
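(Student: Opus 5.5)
The statement is quoted from Hochman and Shmerkin, and we would follow the structure of their argument. The plan is to reduce the rigidity claim to a \emph{normality} statement: almost every point for $\mu$ is generic for the R\'{e}nyi-Parry measure $\nu_{\beta_1}$ under $T_{\beta_1}$. Joint invariance then pins $\mu$ down. The steps are as follows.

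First, decompose $\mu$ into ergodic components with respect to $T_{\beta_2}$, and fix a component $\eta$. By hypothesis $h_{\eta}(T_{\beta_2})>0$. By the Shannon--McMillan--Breiman theorem, together with the fact that $\beta_2$-adic cylinders at level $n$ have length comparable to $\beta_2^{-n}$, the measure $\eta$ is exact dimensional with $\dim \eta = h_\eta(T_{\beta_2})/\log\beta_2>0$.

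The key geometric input, and the main obstacle, is the following. An ergodic $T_{\beta_2}$-invariant measure of positive dimension is \emph{uniformly scaling}: its sceneries, i.e.\ the rescaled restrictions $\eta$ to $\beta_2$-adic intervals around typical points, equidistribute to a fixed distribution $P$ on measures. Moreover, $P$ is invariant under the scaling flow up to the discrete action by $\log\beta_2$, and it is not supported on atomic measures. One then needs the Hochman--Shmerkin equidistribution theorem: if $\beta_1$ is Pisot and the pure-point spectrum of the scaling dynamics of $P$ contains no nonzero multiple of $1/\log\beta_1$, then $\eta$-a.e.\ $x$ is $T_{\beta_1}$-normal, i.e.\ generic for $\nu_{\beta_1}$. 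The spectral condition is where multiplicative independence $\beta_1\nsim\beta_2$ enters. The spectrum of the suspension of the $\times\beta_2$ scenery flow lies in $\frac{1}{\log\beta_2}\mathbb{Q}$-type frequencies, which cannot resonate with $1/\log\beta_1$ because $\log\beta_1/\log\beta_2\notin\mathbb{Q}$. The Pisot hypothesis is used to control the $\beta_1$-expansion dynamics via the algebraic structure of $T_{\beta_1}$ (finite-type-like approximation and the absence of eigenvalue obstructions). I expect this step, the spectral analysis of the scenery flow combined with the Fourier/entropy-porosity argument giving normality, to be the genuine difficulty. We would invoke it as a black box from \cite{Hochman-Shmerkin-2015} rather than reprove it.

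Granting this, the conclusion is short. Applying the normality statement to every $T_{\beta_2}$-ergodic component shows that $\mu$-a.e.\ $x$ is generic for $\nu_{\beta_1}$ under $T_{\beta_1}$. Since $\mu$ is also $T_{\beta_1}$-invariant, the Birkhoff ergodic theorem says that $\mu$-a.e.\ $x$ is generic for its own $T_{\beta_1}$-ergodic component. Hence every $T_{\beta_1}$-ergodic component of $\mu$ equals $\nu_{\beta_1}$, so $\mu=\nu_{\beta_1}$; in particular $\mu$ is absolutely continuous. Finally, $\mu$ is a $T_{\beta_2}$-invariant probability measure absolutely continuous with respect to Lebesgue measure. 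By R\'{e}nyi's uniqueness of such a measure, $\mu=\nu_{\beta_2}$ as well, so $\mu$ is the common R\'{e}nyi-Parry measure.
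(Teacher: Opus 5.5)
The paper does not prove this statement. It is quoted from Hochman--Shmerkin as background, so the only argument to compare with is theirs. Your reduction is the standard way the corollary follows from their normality theorem for $T_{\beta_2}$-invariant measures whose ergodic components have positive entropy. The closing deduction is sound:
\begin{itemize}
\item once $\mu$-a.e.\ $x$ is generic for $\nu_{\beta_1}$ under $T_{\beta_1}$, integrating Birkhoff averages of continuous test functions against the $T_{\beta_1}$-invariant measure $\mu$ (dominated convergence) gives $\mu=\nu_{\beta_1}$;
\item R\'enyi's uniqueness of the absolutely continuous $T_{\beta_2}$-invariant probability measure then gives $\mu=\nu_{\beta_2}$.
\end{itemize}

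The caveat concerns your sketch of what lies inside the black box, which would fail if you tried to fill it in as written.
\begin{itemize}
\item \textbf{The spectral step.} A constant-roof suspension (roof $\log\beta_2$) has eigenvalues $(\theta+k)/\log\beta_2$ for \emph{every} eigenvalue $e^{2\pi i\theta}$ of its base. The base here has $(T_{\beta_2},\eta)$ as a factor, via the point coordinate. Positive-entropy ergodic measures can have irrational eigenvalues. For example, with $\beta_2=4$, encode a Bernoulli sequence $(a_i)$ and a Sturmian sequence $(b_i)$ of rotation number $\theta$ as the digits $2a_i+b_i$. Choosing $\theta\equiv j\log\beta_2/\log\beta_1 \pmod 1$ puts $j/\log\beta_1$ into the spectrum, even though $\beta_1\nsim\beta_2$. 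So the spectrum is not confined to ``$\frac{1}{\log\beta_2}\mathbb{Q}$-type'' frequencies, and multiplicative independence alone does not verify the spectral hypothesis. What you must cite is Hochman--Shmerkin's Host-type normality theorem for invariant measures itself, not their spectral criterion combined with your non-resonance argument.
\item \textbf{Exact dimensionality.} Level-$n$ cylinders of $T_{\beta_2}$ need not have length comparable to $\beta_2^{-n}$; they can be much shorter when the orbit of $1$ comes close to $0$. So Shannon--McMillan--Breiman alone does not give exact dimensionality. The conclusion $\dim\eta>0$ is still true, for instance from the Hofbauer--Raith formula $\dim\eta=h_\eta(T_{\beta_2})/\log\beta_2$, but this too is part of what is being cited rather than something your sketch establishes.
\end{itemize}
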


After presenting the above result, Hochman and Shmerkin raised the following {\em question:
  for what pairs $(\beta_1,\beta_2)$ the R\'{e}nyi-Parry measures coincide, i.e.,  $\nu_{\beta_1} = \nu_{\beta_2}$?}
For two integers $\beta_1, \beta_2 > 1$, $\nu_{\beta_1} = \nu_{\beta_2}$ is the Lebesgue measure.
The situation becomes more complicated when $\beta_1, \beta_2 > 1$ are two non-integers, for which
Bertrand-Mathis has made a conjecture in \cite[Section III]{Bertrand-Mathis-1998}.
The first author and Wang \cite{Huang-Wang-2025} give a complete characterization of two non-integers with the same R\'{e}nyi-Parry measure, which provides an affirmative answer to Bertrand-Mathis' conjecture.
\begin{theorem} {\rm({\cite[Theorem 1.1]{Huang-Wang-2025}})}
For two different non-integers $\beta_1, \beta_2 > 1$, the R\'{e}nyi-Parry measures coincide if and only if $\beta_1$ is the root of equation $x^2-qx-p=0$, where $p,q\in\mathbb{N}$ with $p\leq q$, and $\beta_2 = \beta_1 + 1$.
In particular, if $\nu_{\beta_1} = \nu_{\beta_2}$, then $\beta_1$ and $\beta_2$ are two Pisot numbers of degree $2$.
\end{theorem}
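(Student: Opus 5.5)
The statement splits into a sufficiency computation and a rigidity argument for necessity. Both rest on Parry's explicit density
$$
h_\beta(x)=\sum_{n\ge 0,\ T_\beta^n(1)>x}\beta^{-n},
$$
which is a step function. Its only discontinuities lie in the orbit $\{T^n_\beta(1)\}_{n\ge1}$, and the point $T_\beta^n(1)$ carries a jump of $\beta^{-n}$ (summed over all $n$ landing at the same point). So $\nu_{\beta_1}=\nu_{\beta_2}$ holds exactly when $h_{\beta_1}=c\,h_{\beta_2}$ almost everywhere for some constant $c>0$. I would translate this into a statement about orbits of $1$ and their weights.

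\emph{Sufficiency.} Suppose $\beta_1^2=q\beta_1+p$ with $1\le p\le q$.
\begin{itemize}
\item Since $(q+1)^2-q(q+1)-p=q+1-p>0$, we get $q<\beta_1<q+1$.
\item Hence $T_{\beta_1}(1)=\beta_1-q=p/\beta_1$ and $T_{\beta_1}^2(1)=p-p=0$.
\item So $h_{\beta_1}=1+\beta_1^{-1}\mathbf 1_{[0,p/\beta_1)}$.
\end{itemize}
For $\beta_2=\beta_1+1$ we have $\lfloor\beta_2\rfloor=q+1$.
\begin{itemize}
\item $T_{\beta_2}(1)=\beta_1-q=p/\beta_1<1$.
\item $\beta_2\cdot p/\beta_1=p+p/\beta_1$, so $T_{\beta_2}^2(1)=p/\beta_1$ is a fixed point.
\item Therefore $h_{\beta_2}=1+\sum_{n\ge1}\beta_2^{-n}\mathbf 1_{[0,p/\beta_1)}=1+\beta_1^{-1}\mathbf 1_{[0,p/\beta_1)}$.
\end{itemize}
The two densities are identical, so the measures coincide.

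\emph{Necessity.} Assume $h_{\beta_1}=c\,h_{\beta_2}$ almost everywhere.
\begin{enumerate}
\item Comparing the two functions near $x=1^-$ gives $c=1$, since both equal $1$ there.
\item The weighted jump measures $\sum_n\beta_i^{-n}\delta_{T^n_{\beta_i}(1)}$ must then coincide. So the two orbit closures of $1$ are the same set, with the same total weight at each point.
\item Let $x^*$ be the largest discontinuity point. Comparing the jump at $x^*$ and the total mass $\int h$ yields algebraic relations.
\item I would also use the invariance equation $\mathcal L_{\beta_i}h=h$ for the common density $h$, for both transfer operators. Evaluating it on the intervals between consecutive jump points ties the digits $\lfloor\beta_i\rfloor$ to the location of the jumps.
\end{enumerate}
The target is to show that the orbit of $1$ must be very short: under $\beta_1$ it terminates at $0$ after two steps, and under $\beta_2$ it reaches a fixed point after one step. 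The key observation is that the first jump $T_{\beta_1}(1)=\beta_1-\lfloor\beta_1\rfloor$ must also be the first jump for $\beta_2$. With the weights $\beta_1^{-1}$ versus a possibly summed weight for $\beta_2$, this forces $\beta_2-\beta_1\in\mathbb Z$. Comparing the remaining weights then forces $\beta_2-\beta_1=1$, together with $\beta_1$ satisfying the quadratic $x^2-qx-p=0$ with $p\le q$.

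The main obstacle is excluding long or aperiodic orbits in the necessity direction. Infinitely many jump points with geometrically decaying weights $\beta_1^{-n}$ and $\beta_2^{-m}$ must match pointwise, and this must be shown impossible unless the orbits collapse. My first attempt would be an ordering argument. I would order the jump points by decreasing weight and match them inductively. Multiplicative independence issues arise here: a weight $\beta_1^{-n}$ would have to equal a finite or geometric sum of powers of $\beta_2^{-1}$. Combined with the interval-by-interval transfer-operator identities, this should force each orbit to hit $0$ or a fixed point within two steps.
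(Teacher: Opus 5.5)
\paragraph{Verdict.} Your sufficiency computation and the first two necessity steps are correct. The rest of the necessity direction, which is the whole content of the theorem, is announced rather than proved.

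\paragraph{What is correct.} $h_\beta(1^-)=1$ forces $c=1$. Right-continuity of $h_\beta(x)=\mu_\beta((x,1])$, where $\mu_\beta=\sum_{n\ge0}\beta^{-n}\delta_{T^n_\beta(1)}$, then upgrades a.e.\ equality to $\mu_{\beta_1}=\mu_{\beta_2}$ on $(0,1)$. The paper does not prove this statement; it quotes it from Huang--Wang. Its proof of the negative analogue in Section~\ref{sec:measure} uses endpoint limits, then finiteness of the orbits and a dichotomy (Propositions~\ref{prop:finite} and~\ref{prop:0}).

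\paragraph{The gap.}
\begin{itemize}
\item Your ``key observation'' $T_{\beta_1}(1)=T_{\beta_2}(1)$ is asserted without argument. Equality of the jump measures only gives $T_{\beta_1}(1)=T_{\beta_2}^m(1)$ for some $m\ge1$.
\item The conclusions $\beta_2-\beta_1=1$ and $\beta_1^2=q\beta_1+p$ are stated, not derived.
\item Your plan for the ``main obstacle'' does not work as stated. Relations like $\beta_2^j=\beta_1^k$ cannot be excluded a priori, so the multiplicative-independence issue you flag stays open.
\item A periodic atom first hit at time $n\ge2$ has weight $\beta^{-n}/(1-\beta^{-p})$. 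This is not a priori below $\beta^{-1}$: for $n=2$, $p=1$ it exceeds $\beta^{-1}$ whenever $\beta<2$. So ordering atoms by weight is not automatic.
\item ``The largest discontinuity point'' need not exist until you know the orbit is finite.
\item You never use the limit at $0^+$, which is what gives finiteness in the endpoint scheme: if neither orbit hits $0$, then $\beta_1/(\beta_1-1)=\beta_2/(\beta_2-1)$.
\end{itemize}

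\paragraph{A way to close it.} The missing idea is a lemma that makes your observation true: $T_\beta(1)$ is the strictly heaviest atom of $\mu_\beta$ in $(0,1)$.
\begin{itemize}
\item An atom first hit at time $n\ge2$ and never revisited weighs at most $\beta^{-2}$.
\item If the atom is periodic with period $p$, let $Y$ be the largest point of its cycle. Then $\lfloor\beta Y\rfloor\ge1$, since otherwise $T_\beta Y=\beta Y>Y$. Hence $1>Y\ge\beta^{-1}/(1-\beta^{-p})$, and the weight satisfies $\beta^{-n}/(1-\beta^{-p})<\beta^{-(n-1)}\le\beta^{-1}$.
\end{itemize}
So $\{\beta_1\}=\{\beta_2\}=:t$; label so that $\beta_2=\beta_1+k$ with $k\in\mathbb N$. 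Now compare the weights $W_1,W_2$ of the atom at $t$.
\begin{itemize}
\item If $t$ is not $T_{\beta_2}$-periodic, then $W_2=\beta_2^{-1}<\beta_1^{-1}\le W_1$, a contradiction. So $W_2=1/(\beta_2-\beta_2^{1-p_2})$ for some period $p_2$.
\item If $t$ were also $T_{\beta_1}$-periodic, then $k=\beta_2^{1-p_2}-\beta_1^{1-p_1}<1$, a contradiction.
\item Hence $W_1=\beta_1^{-1}$, so $k=\beta_2^{1-p_2}\le1$, which forces $k=p_2=1$.
\item Then $T_{\beta_2}(t)=t$ gives $\beta_1t=p\in\mathbb N$. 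This is $\beta_1^2=q\beta_1+p$ with $q=\lfloor\beta_1\rfloor$ and $p=\beta_1t<q+1$, so $p\le q$.
\end{itemize}
This route needs no finiteness of the orbits and no matching of the remaining atoms, so it is genuinely shorter than the endpoint/finiteness scheme. You also omit the Pisot clause, but it is immediate: the conjugates are $-p/\beta_1$ and $1-p/\beta_1$.
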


It is nature to ask how about the invariant measures of $T_{-\beta}$ and conjecture that similar results still hold for negative $\beta$.

\begin{theorem}\label{thm:measure}
Let $\beta_1, \beta_2 > 1$ be two different non-integers. The negative $\beta$-transformations $T_{-\beta_1}$ and $T_{-\beta_2}$ have the same invariant measure if and only if $\beta_1$ is a root of the quadratic equation $x^2 - qx - p = 0$ for some natural numbers $p, q$ with $p \leq q$, and $\beta_2 = \beta_1 + 1$.
\end{theorem}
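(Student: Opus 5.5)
The plan is to translate equality of the invariant measures into equality of the normalized step-function densities $\widetilde h_{-\beta_1}=\widetilde h_{-\beta_2}$ almost everywhere, using formula \eqref{eq:nor-density}. Since $h_{-\beta}$ is a left-continuous step function whose possible jumps sit at the orbit points $T_{-\beta}^n(1)$, the two densities must have the same discontinuity set, and their values on each interval between consecutive discontinuities must be proportional with one common constant $K_{-\beta_2}/K_{-\beta_1}$.

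\emph{Sufficiency.} This is a direct computation.

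1. Suppose $\beta^2=q\beta+p$ with $1\le p\le q$. Then $\beta-q=p/\beta\in(0,1)$, so $\lf\beta\rf=q$ and $T_{-\beta}(1)=q+1-\beta=1-p/\beta=:c$.
2. Next, $\beta c=\beta-p=q-p+p/\beta$, so $\lf \beta c\rf=q-p$. Hence $T_{-\beta}(c)=c$, i.e.\ $c$ is a fixed point.
3. Summing the series, $h_{-\beta}=1$ on $(c,1]$ and $h_{-\beta}=1-\frac1\beta\sum_{k\ge0}(-\beta)^{-k}=\frac{\beta}{\beta+1}$ on $(0,c]$.
4. For $\beta_2=\beta+1$ we have $\lf\beta_2\rf=q+1$, so $T_{-\beta_2}(1)=q+2-\beta_2=q+1-\beta=c$, the \emph{same} point.
5. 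Moreover $\beta_2c=(\beta+1)(q+1-\beta)=q+1-p$ is an integer. Therefore $T_{-\beta_2}(c)=1$, and the orbit of $1$ is the $2$-cycle $1\to c\to1$.
6. Summing even and odd terms separately gives $h_{-\beta_2}=\frac{1}{1-\beta_2^{-2}}$ on $(c,1]$ and $\frac{1-\beta_2^{-1}}{1-\beta_2^{-2}}$ on $(0,c]$. The ratio of the two values is $1-\frac1{\beta+1}=\frac{\beta}{\beta+1}$, the same as for $\beta$.

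After normalization the densities therefore coincide, which proves the ``if'' direction.

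\emph{Necessity.} I would analyse the discontinuity points from the top down.

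1. Write $c_i=T_{-\beta_i}(1)=\lf\beta_i\rf+1-\beta_i\in(0,1)$; since $\beta_i$ is not an integer, $c_i<1$. The density $h_{-\beta_i}$ equals $1$ on the interval just below $1$ (up to contributions of later orbit points equal to $1$), and its first genuine jump below $1$ occurs at the largest orbit point $<1$ carrying a nonzero net coefficient.
2. Comparing the value immediately to the left and right of each common discontinuity gives equations of the form $\sum_{n\in A}(-\beta_1)^{-n}=K\sum_{m\in B}(-\beta_2)^{-m}$. Here $A$ and $B$ are the index sets of orbit points lying at that position.
3. The first step is to prove that the discontinuity set is finite, i.e.\ $1$ has an eventually periodic orbit for both maps. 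The idea is that an infinite orbit produces infinitely many jumps, of sizes decaying like $\beta_1^{-n}$ and $\beta_2^{-m}$, which must match pairwise with a common ratio $K$. That forces $\beta_1^{-n_k}\asymp\beta_2^{-m_k}$ along sequences, and hence $\log\beta_1/\log\beta_2\in\mathbb Q$ together with a rigid arithmetic coincidence. I would then rule this out by applying the same matching to the point $c_i$ and its images, whose $(-\beta)$-expansions are determined by the digit sequence.
4. With finite orbits, both $\beta_i$ are algebraic (Parry-type numbers for $-\beta$). The matching equations become a finite system.
5. The key reduction is to show $c_1=c_2$, and that the ratio of the density values on $(c,1]$ and $(0,c]$ is the same for both maps. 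Then one shows the orbit of $c$ must close up immediately: either $c$ is fixed, or $c\mapsto1$. Any longer orbit would create an extra jump point that the other map cannot reproduce with the correct weights.
6. From $c_1=c_2$ we get $\lf\beta_1\rf-\beta_1=\lf\beta_2\rf-\beta_2$, hence $\beta_2=\beta_1+k$ with $k\in\N$. The two closing conditions (fixed point, resp.\ return to $1$), combined with equality of the jump ratios, then force $k=1$ and $\beta_1^2=q\beta_1+p$ with $p\le q$. This mirrors the final algebra in \cite[Theorem 1.1]{Huang-Wang-2025}.

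The main obstacle is step 3 together with the combinatorial core of step 5: showing that no longer or infinite orbit configurations can produce identical step functions. Several orbit points may coincide or cancel, because the coefficients alternate in sign, and this complicates the jump bookkeeping. I would first try to control only the two largest discontinuities (near $1$ and at $c_i$) and derive enough equations there. The hope is that $\beta_2-\beta_1\in\N$ and the quadratic relation can be forced without classifying the full orbit. Failing that, I would adapt the case analysis of Huang--Wang via the conjugacy to $G_{-\beta}$.
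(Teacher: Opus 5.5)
Your sufficiency computation is correct and is the same as the paper's Proposition~\ref{prop:suff}. The necessity half, however, is a plan rather than a proof. Steps 3 and 5 carry all the content, and neither is carried out. Two ingredients are missing there.

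\emph{(a) The left endpoint.} Since $T_{-\beta}^n(1)>0$ for all $n$, we have $h_{-\beta}(0^+)=\sum_{n\ge0}(-\beta)^{-n}=\beta/(\beta+1)$. By Lemma~\ref{lemma:limit} this forces the proportionality constant to be $K=K_{-\beta_1}/K_{-\beta_2}=\frac{\beta_1(\beta_2+1)}{\beta_2(\beta_1+1)}\neq 1$ (Lemma~\ref{le:equal}). On the other hand, $h_{-\beta}(1^-)=1$ whenever $1$ is not periodic. So if $1$ is periodic for neither map, then $K=1$, a contradiction.

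\emph{(b) No cancellation.} The cancellation you worry about must be excluded, not just flagged. The times $n$ with $T_{-\beta}^n(1)=y$ form either $\{n_0\}$ or $\{n_0+kP:k\ge 0\}$. Hence the jump of $h_{-\beta}$ at $y$ is $W(y)=(-\beta)^{-n_0}$ or $(-\beta)^{-n_0}/(1-(-\beta)^{-P})$, never $0$ (Proposition~\ref{prop:property}(ii)). Only with this do a.e.-equal densities give equal orbit sets and the pointwise identities $W_1(y)=K\,W_2(y)$.

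Together, (a) and (b) give finiteness of both orbits with no Diophantine input. Your step 3 does not: comparability $\beta_1^{-n_k}\asymp\beta_2^{-m_k}$ does not yield $\log\beta_1/\log\beta_2\in\mathbb Q$, and multiplicative dependence alone contradicts nothing.

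Your step 5 asserts $c_1=c_2$ and that the orbit of $c$ ``closes up immediately'', but gives no mechanism. Equal orbit sets do not give $c_1=c_2$, because $T_{-\beta}(1)$ need not be the largest orbit point below $1$: for $\beta=1.9$, $T_{-\beta}(1)=0.1$ while $T_{-\beta}^2(1)=0.81$.

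The paper's route is different. It shows that $1$ is periodic for exactly one map, say with period $m$ for $T_{-\beta_1}$ (Proposition~\ref{prop:0}). It then compares the top interval to get $\frac{\beta_1(\beta_2+1)}{\beta_2(\beta_1+1)}=\frac{\beta_1^m}{\beta_1^m-(-1)^m}$. For $m=2$ this is $\beta_1=\beta_2+1$, and the single orbit point $c$ is then fixed by $T_{-\beta_2}$, which yields the quadratic.

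The exclusion of $m\ge 3$ is only sketched in the paper, and your step 2 is the right tool to finish it:
\begin{itemize}
\item With $K=1/(1-(-\beta_1)^{-m})$, the identities $W_1=KW_2$ say that the $m-1$ orbit points of $T_{-\beta_2}$ carry exactly the weights $(-\beta_1)^{-j}$, $1\le j\le m-1$.
\item Those weights are $r^n$ for $n<t$ and $r^n/(1-r^P)$ for $t\le n\le m-1$. Here $r=-1/\beta_2$, $t$ is the entry time into the cycle and $P$ is its length.
\item If either block has two terms, then $r=(-1/\beta_1)^k$ with $k\ge 3$ odd. The exponents $j$ then fall into at most two residue classes mod $k$. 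Yet they exhaust $\{1,\dots,m-1\}$, which contains $\{1,\dots,k\}$ because two of them differ by $k$. This is impossible.
\item Otherwise $m\le 3$, and a sign check leaves only $m=2$.
\end{itemize}
The same exponent comparison, $(-\beta_1)^{-j}=(-\beta_2)^{-\sigma(j)}$ with $\sigma$ a permutation, rules out the case where $1$ is periodic for both maps.
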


\par  To understand the dynamical behavior underlying this measure equivalence, we examine the orbits of the critical points.
\par  \ For $\beta_1$, a direct computation shows that $T_{-\beta_1}(1) = -\beta_1 + q + 1$ and $T_{-\beta_1}^2(1) = T_{-\beta_1}(-\beta_1 + q + 1) = T_{-\beta_1}(1)$. Consequently,
$
T_{-\beta_1}^n(1) = -\beta_1 + q + 1 \quad \text{for all } n \geq 1,
$
which implies that the orbit of 1 is \emph{eventually periodic}. If we extend the definition by setting $T_{-\beta}(0) := 1$, and inductively $T_{-\beta}^n( 0 ):= T_{-\beta}\big( T_{-\beta}^{n-1} (0) \big)$ for $n \ge 2$. A straightforward calculation reveals that $T_{-\beta_1}^n(1) = T_{-\beta_1}^n(0)$ for all $n \geq 2$. This means the orbits of the critical points 0 and 1 coincide after a finite number of iterations, a phenomenon known as {\em matching property}.
\par Matching property has attracted attentions in iterated piecewise maps and is often related with Markov partitions, entropy and invariant measures.
 There are several results concerning matching property, especially in the case of $\alpha$-continued fraction maps. In \cite{Carminati,Carminati2013,Kraaikamp} matching was used to study the entropy as a function of $\alpha$.
A wealth of results \cite{Bruin,sun2023} have also emerged regarding the matching property for intermediate $\beta$-transformations at the case $\beta$ being multinacci numbers. It is therefore natural for us to consider the matching property in $T_{-\beta}$. Given  $ q , m\in\N$, we define the {\em generalized multinacci number}  $\beta_{q ,m}\in( q , q +1)$ to be the real root of $$ x^m= q  x^{m-1}+ q  x^{m-2}+\ldots + q .$$
\begin{theorem}\label{thm:matching}
If $\beta$ is a generalized multinacci number, then $T_{-\beta}$ has matching property.
\end{theorem}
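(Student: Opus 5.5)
The plan is to prove the theorem by computing the forward orbit of the critical point $1$ under $T_{-\beta}$ for $\beta=\beta_{q,m}$ explicitly. We show that after finitely many steps it lands on a fixed point of $T_{-\beta}$. With the convention $T_{-\beta}(0):=1$, the orbit of $0$ is the orbit of $1$ shifted by one. So once $T_{-\beta}^{k}(1)$ is a fixed point $x^*$, we get $T_{-\beta}^{n}(0)=T_{-\beta}^{n-1}(1)=x^*=T_{-\beta}^{n}(1)$ for all $n\ge k+1$, which is the matching property in the sense used above for the quadratic case. Recall that the fixed points of $T_{-\beta}$ are exactly $x=d/(\beta+1)$, where $d$ is the digit of $x$.

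\textbf{Step 1 (data).} Since $\beta\in(q,q+1)$, we have $\lf\beta\rf=q$, hence $T_{-\beta}(1)=q+1-\beta=1-\varepsilon$ with $\varepsilon:=\beta-q$. Dividing the defining relation by $\beta^{m-1}$ gives
\[
\varepsilon=q\left(\beta^{-1}+\cdots+\beta^{-(m-1)}\right).
\]
The key algebraic identity we use is $\beta^m=q(\beta^{m-1}+\cdots+1)$, equivalently $(\beta-1)\beta^m=(q+1)\beta^{m}-\beta^{m}\cdot 1-q$ after multiplying by $\beta-1$, that is, $\beta^{m+1}=(q+1)\beta^m-q$.

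\textbf{Step 2 (small cases as a guide).} For $m=2$ this is the case $p=q$ treated above: $T_{-\beta}(1)=q+1-\beta=1/(\beta+1)$ is already fixed, with digit $1$. For $m=3$ and $q=1$ (tribonacci), a direct computation gives $T_{-\beta}(1)=2-\beta$ with digit $1$, then $T^2_{-\beta}(1)=(\beta-1)^2$. Using $\beta^3=\beta^2+\beta+1$, this last point satisfies $(\beta-1)^2(\beta+1)=2$, so it is the fixed point $2/(\beta+1)$. These cases suggest the general statement: the $(-\beta)$-digits of $1$ follow an explicit alternating pattern of length about $m-1$ built from the digits $q+1$ and $1$ (after the leading digit $q+1$), and $T^{m-1}_{-\beta}(1)=d/(\beta+1)$ for an explicit $d\in\{1,q+1\}$ depending on the parity of $m$.

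\textbf{Step 3 (general induction).} We define candidate points $x_k$ as explicit polynomials in $\beta$ obtained by iterating $x\mapsto -\beta x+d_{k+1}$ with the guessed digits. Then we prove by induction on $k\le m-1$ two things. First, $x_k\in(0,1]$. Second, $\lf\beta x_k\rf+1=d_{k+1}$, so that $x_k=T_{-\beta}^k(1)$. The inequalities needed are all of the form $d-1\le\beta x_k<d$. We verify them by writing $x_k$ via the identity $\varepsilon=q\sum_{j=1}^{m-1}\beta^{-j}$ as a finite alternating sum $\sum_j \pm q\beta^{-j}$ (plus a constant), and comparing it with partial geometric sums. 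Finally, we check the closing identity $x_{m-1}(\beta+1)=d$ by reducing with $\beta^{m+1}=(q+1)\beta^m-q$, exactly as in the tribonacci computation.

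\textbf{Main obstacle.} We expect the main difficulty to be the digit inequalities in Step 3, since they must hold uniformly in $q$ and $m$, and the alternating signs make monotonicity arguments delicate. The first thing to try is to translate the problem into the $(-\beta)$-expansion of $1$ and bound tails by geometric series in $\beta^{-1}\le q^{-1}$. If the guessed digit pattern turns out to be wrong for larger $m$, we will instead compute the orbit symbolically with a general $m$ (tracking $x_k$ as $a_k+b_k\beta$ modulo the minimal relation) to find the correct pattern before proving it.
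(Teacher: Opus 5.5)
Your strategy is the paper's: show that the orbit of $1$ lands on a fixed point at time $m-1$, so that $T_{-\beta}^{m}(0)=T_{-\beta}^{m-1}(1)=T_{-\beta}^{m}(1)$. Your guessed pattern is also correct. The digits are $q+1,1,q+1,1,\ldots$ for the first $m-1$ steps, and $T_{-\beta}^{m-1}(1)$ equals $1/(\beta+1)$ for even $m$ and $(q+1)/(\beta+1)$ for odd $m$.

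However, Step 3 describes what you intend to do rather than giving an argument. You never write down $x_k$. The inequalities $d_{k+1}-1\le\beta x_k<d_{k+1}$ are neither stated precisely nor proved. Your closing hedge concedes that the pattern is unverified beyond the two small cases you computed. These digit checks are the whole content of the theorem, so the proposal has a genuine gap. It is a gap of execution, not of strategy.

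The missing idea is to expand with $1=q\sum_{j=1}^{m}\beta^{-j}$ rather than with $\varepsilon$. This gives $T_{-\beta}(1)=q+1-\beta=q\beta^{-m}$, and every iterate is then a sub-sum with positive coefficients:
\[
T_{-\beta}^{k}(1)=q\sum_{j\in J_k}\beta^{-j},\qquad
J_k=\begin{cases}\{m,m-2,\ldots,m-k+1\}, & k\ \text{odd},\\
\{1,\ldots,m-k\}\cup\{m-k+2,m-k+4,\ldots,m\}, & k\ \text{even},\end{cases}
\qquad 1\le k\le m-1.
\]
This is the closed form the paper proves by induction. Write $T^{k+1}_{-\beta}(1)=d-\beta T^{k}_{-\beta}(1)$, with $d$ the digit of $T^k_{-\beta}(1)$, and substitute the expansion of $1$. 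In both parities this gives $J_{k+1}=\{1,\ldots,m\}\setminus\left((J_k\setminus\{1\})-1\right)$, where $A-1:=\{j-1:j\in A\}$.

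The digit checks then become one-liners with no alternating signs to control:
\begin{enumerate}
\item For odd $k$, $\beta T^k_{-\beta}(1)$ is a sub-sum of $q\sum_{j=1}^{m-1}\beta^{-j}=1-q\beta^{-m}<1$, so the digit is $1$.
\item For even $k$, $1\in J_k$ gives $\beta T^k_{-\beta}(1)\ge q$. Also $\beta T^k_{-\beta}(1)\le q\sum_{j=0}^{m-1}\beta^{-j}=\beta<q+1$, so the digit is $q+1$.
\end{enumerate}

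Finally, $J_m=J_{m-1}$. Equivalently, $(\beta+1)T^{m-1}_{-\beta}(1)$ equals $q\sum_{j=1}^{m}\beta^{-j}=1$ for even $m$ and $q\sum_{j=0}^{m}\beta^{-j}=q+1$ for odd $m$. Neither the identity $\beta^{m+1}=(q+1)\beta^m-q$ nor geometric tail estimates are needed.
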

\par Note that $q < \beta_{q ,m} < q+1$ and $\beta_{q ,m} \nearrow (q+1)$. Our second result shows that  matching occurs  for $T_{-\beta_{ q ,m}}$ for all $ q , m\in\N$.

\par   \ For $\beta_2 = \beta_1 + 1$, we have $T_{-\beta_2}(1) = -\beta_1 + q + 1$ and $T_{-\beta_2}^2(1) = T_{-\beta_2}(-\beta_1 + q + 1) = 1$. Then,
\[
T_{-\beta_2}^n(1) =
\begin{cases}
-\beta_1 + q + 1, & \text{for } n = 2k+1, \\
1, & \text{for } n = 2k,
\end{cases}
\]
which shows that the orbit of 1 under $T_{-\beta_2}$ is {\em periodic}.
A fundamental result by Frougny and Lai \cite{Frougny-Lai-2011} characterizes when the $(-\beta)$-shift is a subshift of finite type.

\begin{lemma}{\rm({\cite[Proposition 3]{Frougny-Lai-2011}})} \label{lem:SFT-condition}
Let $\beta > 1$ be a real number. The $(-\beta)$-shift $S_{-\beta}$ is a subshift of finite type (SFT) if and only if the $-\beta$-expansion of $1$ is periodic.
\end{lemma}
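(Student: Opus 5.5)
The plan is to reduce the statement to a lexicographic description of $S_{-\beta}$ and then copy Parry's argument for the classical $\beta$-shift. Write $d=d_1d_2\cdots$ for the $(-\beta)$-expansion of $1$. Equip $\{1,\ldots,\lf\beta\rf+1\}^\N$ with the \emph{alternating order}: $u\prec_{alt} v$ if, at the first index $k$ where they differ, $(-1)^k(u_k-v_k)>0$. Since $x\mapsto\pi_{-\beta}(x)$ is monotone for this order (with the sign flipping at each step, because $T_{-\beta}$ is decreasing on each branch), the first step is the Ito--Sadahiro type characterization. A sequence $x$ lies in $S_{-\beta}$ iff for every $n\ge 0$
\[
d^*\prec_{alt}\sigma^n x\preceq_{alt} d ,
\]
where $d^*$ denotes the left-limit coding of $0$. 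Using the convention $T_{-\beta}(0)=1$, $d^*$ is essentially $1d$ (or a limit of codings $\pi_{-\beta}(\varepsilon)$ as $\varepsilon\downarrow0$). I will prove this by the usual induction on prefixes: the cylinders of $T_{-\beta}$ are intervals whose endpoints are images of $0$ and $1$, and $\sigma$ is conjugate to $T_{-\beta}$ on $\pi_{-\beta}((0,1])$.

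\textbf{($\Leftarrow$)} Suppose $d=(d_1\cdots d_p)^\infty$ is purely periodic. Then every inequality $\sigma^n x\preceq_{alt} d$ is decided within the first $p+1$ symbols. Indeed, if $\sigma^n x$ agrees with $d$ on $p$ symbols, then the comparison restarts from the period, and we recurse to $\sigma^{n+p}x$. So the upper constraint is equivalent to forbidding a finite list of words of length at most $p+1$. The same holds for the constraint at $d^*$, because $d^*$ is a one-letter shift of the periodic word $d$ (up to the parity bookkeeping). Hence $S_{-\beta}$ is defined by finitely many forbidden blocks, i.e.\ it is an SFT. I must check the parity subtlety for odd $p$: here the comparison with $d$ after one period reverses orientation, so one should use the period $2p$.

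\textbf{($\Rightarrow$)} Assume $S_{-\beta}$ is an SFT with memory $N$, and suppose $d$ is not purely periodic. For each large $m$ I will build a word $w^{(m)}$ that is forbidden although all of its subwords of length $N$ are allowed. The natural candidate is to take the prefix $d_1\cdots d_{m}$ of $d$, with $m$ chosen so that $d_{m+1}\cdots$ starts again like $d_1d_2\cdots$ at most a bounded amount, and to replace the last letter by the letter making the word exceed $d$ in $\prec_{alt}$. Every proper suffix of the prefix stays strictly $\prec_{alt}$-below $d$, because $d$ is not periodic, and this gives allowed subwords of length $N$ while $w^{(m)}$ itself is forbidden. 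In the case where $d$ is eventually periodic but not purely periodic (or $d$ is aperiodic), choosing $m$ along the preperiod/tail mismatch gives minimal forbidden words of unbounded length, which is a contradiction.

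I expect the main obstacle to be this last construction: in the alternating order, replacing one letter flips the direction depending on the parity of its position, and the left constraint coming from $d^*$ can interfere with it. I would handle this by splitting according to the parity of $m$ and by using the extremal letters $1$ and $\lf\beta\rf+1$ as the replacement digits.
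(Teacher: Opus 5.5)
The paper does not prove this lemma; it is quoted from Frougny--Lai, so I assess your argument on its own.

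Your ($\Leftarrow$) direction is essentially sound once two points are fixed. First, the shift must be the closed one, with $d^*\preceq\sigma^n x$. With the strict inequality and $d$ of even period, the set omits $1d$, which is a limit of expansions of points near $0$, so the set is not closed and cannot be an SFT. Second, for odd $p$ the word $d^*=(1d_1\cdots d_{p-1}(d_p-1))^\infty$ is not a shift of $d$. It is, however, itself purely periodic with even period $p+1$, so your even-period argument applies to it directly. When $d^*=1d$, the lower bound is implied by the upper one.

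The ($\Rightarrow$) direction has a genuine gap. You claim that every proper suffix of $d_1\cdots d_m$ is strictly below $d$ ``because $d$ is not periodic''. This is false: a suffix of a prefix can itself be a prefix of $d$ (for instance whenever $d_m=d_1$), and non-periodicity of the infinite word $d$ does not rule out such borders for any given $m$. Because of these borders, changing the last letter so that the word exceeds $d$ can create a \emph{short} bad suffix $d_1\cdots d_j b$, coming from a border of length $j$ of the same parity with $b$ on the wrong side of $d_{j+1}$. So the $N$-factors of $w^{(m)}$ need not be allowed. Taking $b$ extremal makes this worse, and where $d_m$ is already extremal in the relevant direction no replacement exists at all. ``Preperiod/tail mismatch'' also gives no rule for aperiodic $d$. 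The choice of position and letter, which is the whole content of this direction, is therefore missing.

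The missing idea is Parry's follower-set comparison, which goes through in the alternating order. If $d$ is not purely periodic, the closed shift is $X=\{x:\sigma^n x\preceq d \text{ for all } n\}$. Suppose $X$ is an SFT with memory $N$, so that $uw\in B(X)$, $wy\in X$ and $|w|=N$ imply $uwy\in X$. Fix an even $m>N$ and put $w=d_{m-N+1}\cdots d_m$. Let $j$ be the largest even $j\le N$ such that $d_1\cdots d_j$ is a suffix of $w$ ($j=0$ is allowed).

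\begin{itemize}
\item Using only $\sigma^i d\preceq d$, you get $\sigma^m d\preceq\sigma^j d$.
\item You also get $w\,\sigma^j d\in X$. For borders $j'<j$ this is immediate from self-admissibility. Borders $j'\in(j,N]$ are odd by the choice of $j$, and there the needed inequality is again self-admissibility.
\item Gluing $d_1\cdots d_m$ and $w\sigma^j d$ along $w$ gives $d_1\cdots d_m\sigma^j d\in X$. Hence $d_1\cdots d_m\sigma^j d\preceq d_1\cdots d_m\sigma^m d$, i.e.\ $\sigma^j d\preceq\sigma^m d$ since $m$ is even.
\item Thus $\sigma^m d=\sigma^j d$, and $d=d_1\cdots d_{m-j}\,d$ is purely periodic.
\end{itemize}

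Read contrapositively, this gives exactly the forbidden word you wanted. Take $d_1\cdots d_m$ followed by $\sigma^j d$ up to its first disagreement with $\sigma^m d$, at some position $t$. The altered letter is $d_{j+t}$ at position $m+t$, not an extremal digit at position $m$.
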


This is analogous to Parry's classical result for greedy $\beta$-expansions \cite{Parry-1960}, where a number $\beta$ is called a \emph{simple $\beta$ number} if its greedy $\beta$-expansion of 1 is finite, and the $\beta$-shift is an SFT if and only if $\beta$ is a  simple $\beta$ number. In the negative $\beta$-expansion setting, we correspondingly define $\beta$ to be a {\em simple $-\beta$ number} if $\pi_{-\beta}(1)$ is periodic. It was proved by Parry \cite[Theorem 5]{Parry-1960} that, the set of simple $\beta$ numbers is dense in the interval $(1,+\infty)$, meaning the set of $\beta$ for which the $\beta$-shift is an SFT is dense in this interval. This denseness has since inspired extensive research in fractal geometry and Diophantine approximation, as seen in \cite{Shen,tan}. Motivated by these parallels, we now focus on establishing the denseness of simple $-\beta$ numbers, a result that similarly lays the foundation for applications across related areas of dynamical systems and number theory.

\begin{theorem}\label{thm:sft-matching}
The sets of simple $-\beta$ numbers is dense in the interval $(1, +\infty)$.
\end{theorem}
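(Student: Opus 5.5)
Given $1<a<b$, we will produce $\beta\in(a,b)$ with $\pi_{-\beta}(1)$ purely periodic, i.e. $T_{-\beta}^n(1)=1$ for some $n\ge1$. We work with the one-parameter family of functions $f_n(\beta):=T_{-\beta}^n(1)$ and apply an intermediate value argument to $f_n(\beta)-1$ on a small parameter interval where $f_n$ is continuous. We will find a subinterval $J\subset(a,b)$ and an $n$ such that $f_n$ restricted to $J$ is continuous and its image covers $(0,1]$. Then some $\beta\in J$ has $f_n(\beta)=1$. By Lemma~\ref{lem:SFT-condition}, or directly by the definition of simple $-\beta$ number, this $\beta$ is simple.

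\textbf{Step 1 (local structure).} Fix $n$. The map $\beta\mapsto T_{-\beta}^k(1)$ is piecewise continuous. On any parameter interval where the digits $d_1,\dots,d_n$ of $1$ stay constant, we have the explicit formula
$$T_{-\beta}^n(1)=(-\beta)^n+\sum_{k=1}^{n} d_k(-\beta)^{n-k},$$
obtained by iterating $x\mapsto -\beta x+d$. On such a cylinder the function is a polynomial, hence continuous. Its derivative has modulus comparable to $n\beta^{n-1}$, up to bounded corrections. Indeed, differentiating $x_{k+1}=-\beta x_k+d_k$ gives $x_{k+1}'=-\beta x_k'-x_k$, so $|x_n'|$ grows like $\beta^{n}$ times a sum of terms $x_k/\beta^{k+1}$ with alternating signs. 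This is the delicate point: the derivative could in principle suffer cancellation.

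\textbf{Step 2 (cylinders shrink).} We show that the parameter cylinders $I_n(w)=\{\beta:\ d_1\cdots d_n(1)=w\}$ meeting $(a,b)$ cannot all stay long. Suppose instead that some cylinder $I$ of length $\ge\varepsilon$ persists for all $n$, meaning a fixed interval $J\subset(a,b)$ on which the whole digit sequence of $1$ is constant. Then for every $\beta\in J$ we have $1=\sum_k d_k/(-\beta)^{k}\cdot(-1)$ with the same $(d_k)$. This is an analytic identity in $\beta$ holding on an interval. By uniqueness of analytic continuation it then holds on all of the region of convergence, which is easily contradicted, for instance by letting $\beta\to\infty$ within the region, or by a direct comparison of the series. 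Hence for some $n$ the digit $d_n$ changes inside $J$. When $d_n$ jumps, $f_{n-1}(\beta)$ crosses a discontinuity point $j/\beta$. Consequently $f_n=-\beta f_{n-1}+d_n$ jumps between the values $0^+$ and $1$, that is, it reaches the endpoints of $(0,1]$.

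\textbf{Step 3 (hitting 1).} We pick $n$ minimal so that $d_n$ is non-constant on $J$. Then $f_{n-1}$ is a continuous polynomial on $J$ and its range contains some point $j/\beta$. So there is a one-sided neighbourhood of some $\beta_0\in J$ on which $f_n$ is continuous with limit $1$ at $\beta_0$ from one side, while $f_n(\beta_0)$ takes the other boundary value. More precisely, at $\beta_0$ where $\beta f_{n-1}(\beta)=j$ exactly, the definition with the floor gives $d_n=j+1$ and $T^n_{-\beta_0}(1)=-j+j+1=1$. So $\beta_0$ itself satisfies $T_{-\beta_0}^n(1)=1$, and hence $\pi_{-\beta_0}(1)$ is purely periodic. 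This observation removes the need for any derivative estimate. The main obstacle is therefore Step 2: ensuring that the digit sequence of $1$ cannot be constant on a whole parameter interval. We would settle it by noting that on such an interval, $1=\sum_{k\ge1} d_k(-1)^{k+1}\beta^{-k}$ for all $\beta\in J$ gives a nonzero power series in $1/\beta$ with bounded coefficients ($d_1\ge1$) vanishing on an interval. That forces all of its coefficients, including the constant term $-1$, to vanish, which is a contradiction. The conclusion is that $\beta_0\in(a,b)$ is a simple $-\beta$ number, proving density.
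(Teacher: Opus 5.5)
Your argument is correct, and it takes a genuinely different route from the paper. The paper works symbolically. Starting from a non-periodic $\pi_{-\beta}(1)$, it builds periodic words $\upsilon_n$ sharing long prefixes with $\pi_{-\beta}(1)$ and verifies their self-admissibility through a lengthy case analysis (Lemma \ref{le:contract-periodic}). It then checks Steiner's remaining conditions in Proposition \ref{prop:beta-expansion-1} to obtain bases $\beta_n$ with $\pi_{-\beta_n}(1)=\upsilon_n$. Finally it argues $\beta_n\to\beta$ via $h_{top}(S_{-\beta})=\log\beta$.

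You instead work directly in parameter space. On any interval $J\subset(a,b)$ the digit sequence of $1$ cannot be constant. Your power-series argument in $z=1/\beta$ is valid here; this also follows at once from the strict monotonicity in Proposition \ref{prop:order-relation}. So at the first index $n$ where $d_n$ varies on $J$, $f_{n-1}$ is a polynomial on $J$. By the intermediate value theorem, $\beta f_{n-1}(\beta)$ takes an integer value $j$ at some $\beta_0\in J$. Then $T^n_{-\beta_0}(1)=-j+j+1=1$, so $\pi_{-\beta_0}(1)=\overline{d_1\cdots d_n}$.

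Your route is much shorter and bypasses two things: Steiner's admissibility criterion, and the convergence $\beta_n\to\beta$. In the paper that convergence rests on the agreement of $\#B_i$ for $i\le n_k$. It needs more care than it is given there: by subadditivity it directly yields only $\limsup\beta_n\le\beta$. What the paper's construction buys in exchange is explicit symbolic information: concrete periodic expansions approximating $\pi_{-\beta}(1)$, from below or above according to the parity of the period (Remark \ref{re1}).

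Three small points on your write-up. First, the derivative discussion in Step 1 is unnecessary and can be dropped. Second, the ``one-sided neighbourhood'' sentence in Step 3 is slightly off. In fact $f_n(\beta_0)=1$, which matches the one-sided limit on the side where $\lfloor\beta f_{n-1}(\beta)\rfloor=j$; the limit $0$ is approached from the other side. The ``more precisely'' computation that follows is all you need. Third, if you want $\beta_0$ to be a non-integer, choose $J$ free of integers; this also forces $n\ge 2$.
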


The rest of the paper is organized as follows. In Section \ref{sec: preliminaries}, we recall some basic properties of symbolic dynamic system and negative $\beta$-transformations. The proof of Theorem \ref{thm:measure} will be presented in Section \ref{sec:measure}. In the final section we present the results of SFT and matching.

\section{Preliminaries on Symbolic Dynamics and $(-\beta)$-expansions}\label{sec: preliminaries}
The study of negative $\beta$-expansions, particularly the concepts of simple $-\beta$ numbers and matching property, relies heavily on symbolic dynamics (cf.~\cite{Lind_Marcus_1995}). This section provides the necessary background.

\subsection{Symbolic Dynamics Basics}
Given $\beta \ge 1$, let the full shift $$\mathcal{A}^{\N}:=\set{ 1, \ldots, \lf \beta\rf+1}^\N$$ be the set of all infinite sequences $(d_i)=d_1d_2\ldots$ with each digit $d_i$ from the alphabet $\mathcal{A}:=\set{1,\ldots, \lf \beta\rf+1}$. By a \emph{word} we mean a finite string of digits over $\set{1,\ldots, \lf \beta\rf+1}$. Denote by $\mathcal{A}^*:=\set{1,\ldots, \lf \beta\rf+1}^*$ the set of all finite words including the empty word $\epsilon$.
For a word $\mathbf c=c_1\ldots c_m$, we denote its length by $|\mathbf c|=m$.
For two words $\mathbf c=c_1\ldots c_m$ and $\mathbf d=d_1\ldots d_n$ we write $\mathbf{cd}=c_1\ldots c_md_1\ldots d_n$ for their concatenation. In particular, for any $k\in\N$ we denote by $\mathbf c^k$ the $k$-fold concatenation of $\mathbf c$ with itself, and by $\mathbf c^\f$ the periodic sequence which is obtained by the infinite concatenation of $\mathbf c$ with itself.

Throughout the paper we will use alternating lexicographical order $\prec, \preceq, \succ$ or $\succeq$ between sequences in $\mathcal{A}^{\N}$. Let $\mathbf{x}=x_1\ldots x_k\ldots$ and $\mathbf{y}=y_1\ldots y_k\ldots$ with $x_1\cdots x_{k-1}=y_1\cdots y_{k-1}$ and $x_k\neq y_k$. Then
\begin{equation}
x_1x_2\cdots\prec y_1y_2\cdots
\quad
\text{if and only if}
\quad
\begin{cases}
x_k < y_k & \text{when } k \text{ is odd},\\
y_k < x_k & \text{when } k \text{ is even}.
\end{cases}
\end{equation}
And we write $\mathbf{x}\preceq\mathbf{y}$ if $\mathbf{x}\prec \mathbf{y}$ or $\mathbf{x}=\mathbf{y}$.
Equipped with the metric $\rho$ defined by
\begin{equation}\label{eq:rho}
\rho(\mathbf{x}, \mathbf{y})=(\lf \beta\rf+1)^{-\inf\set{n\ge 1: x_n\ne x_n}},
\end{equation}
the symbolic space $\set{1,\ldots, \lf \beta\rf+1}^\N$ becomes a compact metric space. One can verify that the induced topology by the metric $\rho$ coincides with the order topology on $\set{1,\ldots, \lf \beta\rf+1}^\N$.

For $\omega \in \set{ 1, \ldots, \lf \beta\rf+1}^\N$, the \emph{left-shift map} is defined by $\sigma(\omega_1 \omega_2 \dots) = (\omega_2 \omega_3\dots)$.
For any subshift $S\subset \set{ 1, \ldots, \lf \beta\rf+1}^\N$, we call $S$ is closed if it satisfies that  $\sigma(S) \subseteq S$.

Let $F$ be a collection of words in $\mathcal{A}^*$, we will think of as being the \emph{forbidden words}. For any such $F$, define $S_F$ to be the subset of sequences in $\mathcal{A}^\N$ which do not contain any words in $F$.
And we call such $S_F$ a \emph{shift space}.
For  a sequence $x\in\mathcal{A}^{\N}$ and a word $\omega\in \mathcal{A}^*$, we say that $\omega$ occurs in $x$ if there are indices $i\leq j$ such that $x_i\cdots x_j=\omega$.
Note that the empty word $\epsilon$ occurs in every $x$. Let $S$ be a subset of $\mathcal{A}^\N$, and let $B_n(S)$ denote the set of all words with length $n$ occur in $x\in S$. The \emph{language} of $S$ is the collection
$$B(S):=\bigcup_{n=0}^\infty B_n(S).$$
Let $S$ be a shift space. The topological entropy of $S$ is defined by
\begin{equation}\label{eq:entropy}
 h_{top}(S):=\lim_{n\to \infty}\frac{1}{n} \log|B_n(S)|.
\end{equation}

A \emph{subshift of finite type} (SFT) $S\subset \set{ 1, \ldots, \lf \beta\rf+1}^\N$ is a sushift space that can be described by a finite set of forbidden blocks, i.e., there exists a finite set $F\subset \set{ 1, \ldots, \lf \beta\rf+1}^*$ such that all $\omega\in S$ do not contain any block in $F$.

SFTs are fundamental objects in symbolic dynamics due to their finite description and well-understood combinatorial structure.
Despite being the ``simplest" shifts, SFTs play an essential
role in mathematical subjects like dynamical systems. Their study has also provided solutions to important practical problems, such as finding efficient coding schemes to store data on computer disks.

%
%

\subsection{$(-\beta)$-expansions and $(-\beta)$-shifts}

 Recall from \eqref{eq:def-beta} and \eqref{eq:beta-expansion}, for any real $\beta > 1$, the $(-\beta)$-transformation is
 $$T_{-\beta}(x)=-\beta x +\lf\beta x\rf+1,\quad \text{for } x\in(0,1],$$
and then each $x\in(0,1]$ can be expressed as
\begin{equation*}
x=\sum_{n=1}^\infty \frac{-d_n}{(-\beta)^n}=\frac{d_1}{\beta}-\frac{d_2}{\beta^2}+\ldots.
\end{equation*}
A sequence $b_1b_2\ldots\in\mathcal{A}^\N$ is \emph{$(-\beta)$-admissible} if and only if it is the $(-\beta)$-expansion of some $x\in(0,1]$, i.e., $b_n=d_{-\beta,n}(x)$ for all $n\geq 1$ by \eqref{eq:def-digit}.
Since the map $T_{-\beta}$ is order-reversing, the $(-\beta)$-admissible sequence are characterized by the alternating lexicographic order. For $x,y\in(0,1]$, with $x<y$, by \eqref{eq:def-pi}, then we have  $\pi_{-\beta}(x) \prec \pi_{-\beta}(y)$.

Ito and Sadahiro \cite{Ito-Sadahiro-2009} gave a full description of $(-\beta)$-admissible under their system
$$G_{-\beta} (x) =-\beta x -\left\lf \frac{\beta}{\beta+1}-\beta x\right \rf,\quad  x\in \left[-\frac{\beta}{\beta+1},\frac{1}{\beta+1}\right).$$
Since $G_{-\beta}$ is conjugate to $T_{-\beta}$ through the conjugacy function  $\phi(x)=\frac{1}{\beta+1}-x$, we can adopt the description of
$(-\beta)$-admissible in our system.

\begin{proposition}{\rm({\cite[Theorem 10]{Ito-Sadahiro-2009}})}
An integer sequence $d_1d_2\ldots\in\mathcal{A}^\N$ is $(-\beta)$-admissible if and only if
\begin{equation}\label{eq:adimissible}
\pi_{-\beta}^*(0)\prec d_nd_{n+1}\ldots \preceq \pi_{-\beta}(1) \quad \text{for all } n\geq 1,
\end{equation}
where  $\pi_{-\beta}(1)=b_1b_2\ldots$ is the negative $\beta$ expansion of $1$ and
\[
\pi_{-\beta}^*(0) =
\begin{cases}
(1b_1\ldots (b_p-1))^\infty \quad & \text{if $p$ is odd and } \pi_{-\beta}(1)=(b_1b_2\ldots b_p)^\infty, \\
1b_1b_2\ldots \quad & \text{otherwise}.
\end{cases}
\]
\end{proposition}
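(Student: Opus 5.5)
The plan is to prove the characterization directly for $T_{-\beta}$ on $(0,1]$, instead of transporting Ito--Sadahiro's statement through the conjugacy $\phi$. Two facts drive the argument. First, on each branch interval $I_d=\left(\frac{d-1}{\beta},\frac{d}{\beta}\right]$ the map $T_{-\beta}$ is decreasing, onto $(0,1]$ for full branches, and the first digit is $d$ exactly on $I_d$. Second, $\pi_{-\beta}$ is increasing from the usual order to the alternating order $\prec$. This second fact I would prove by induction on the first index where two expansions differ, using that each application of $T_{-\beta}$ reverses order.

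\emph{Necessity.} Let $d_1d_2\ldots=\pi_{-\beta}(x)$. Then $d_nd_{n+1}\ldots=\pi_{-\beta}(T_{-\beta}^{n-1}x)$ with $T_{-\beta}^{n-1}x\in(0,1]$, so monotonicity gives the upper bound $\preceq\pi_{-\beta}(1)$ at once.

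For the lower bound, I compute $\lim_{y\downarrow0}\pi_{-\beta}(y)$ in the product topology and show every expansion is strictly above it. For small $y$ the first digit is $1$ and $T_{-\beta}y=1-\beta y\uparrow1$. So the limit is $1$ followed by $\lim_{z\uparrow1}\pi_{-\beta}(z)$.
\begin{itemize}
\item Now follow $z$ and $T_{-\beta}^kz$ alongside the orbit $T_{-\beta}^k1$. As long as no $T_{-\beta}^k1$ equals a breakpoint $j/\beta$, the digits of $z$ agree with those of $1$, and the side of approach alternates with $k$. This gives the limit $1b_1b_2\ldots$.
\item Exceptionally, suppose $T_{-\beta}^{k-1}1=j/\beta$. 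Then $T_{-\beta}^k1=1$, so $\pi_{-\beta}(1)$ is purely periodic with period $p=k$.
\item If $p$ is odd, $k-1$ is even, so $T_{-\beta}^{p-1}z$ approaches $j/\beta$ from below. The digit there drops to $b_p-1$, and the next iterate tends to $0^+$, which restarts the pattern. This gives $(1b_1\ldots b_{p-1}(b_p-1))^\infty$.
\item If $p$ is even, the approach is from above, the digit stays $b_p$, and we recover $1b_1b_2\ldots$.
\end{itemize}
Strictness comes from $0\notin(0,1]$: an expansion of a positive point cannot equal this limit word, since that word codes the missing left endpoint.

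\emph{Sufficiency.} Given $d_1d_2\ldots$ satisfying \eqref{eq:adimissible}, set $x_n=\sum_{k\ge1}-d_{n+k-1}(-\beta)^{-k}$. Then $x_n=\frac{d_n}{\beta}-\frac{x_{n+1}}{\beta}$, so $x_n=T_{-\beta}x_{n-1}$ as soon as $x_{n-1}\in I_{d_{n-1}}$. It therefore suffices to show $x_n\in(0,1]$ for every $n$; then $x_n\in I_{d_n}$ follows.

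To prove this, I would show that the value map $\mathbf d\mapsto\sum-d_k(-\beta)^{-k}$ is weakly monotone for $\prec$ on sequences obeying the bounds. Since it sends $\pi_{-\beta}(1)$ to $1$ and $\pi_{-\beta}^*(0)$ to $0$, this gives $0\le x_n\le1$.

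The main obstacle is ruling out the equality $x_n=0$ (and equality of values for different sequences) despite the strict inequality $\pi^*_{-\beta}(0)\prec d_n\ldots$. Here I would use a first-difference argument. If two sequences with values in $[0,1]$ first differ at position $m$, the tails after $m$ have values in $[0,1]$. So the gap in values is at least $\beta^{-m}(|d_m-d'_m|-1)$, and a tie forces the tails to be extremal, i.e.\ equal to $\pi_{-\beta}(1)$ or $\pi^*_{-\beta}(0)$. Those extremal tails are exactly what the strict left inequality and the description of $\pi^*_{-\beta}(0)$ from the necessity part exclude, with a separate check in the odd-period case.
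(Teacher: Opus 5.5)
\textbf{Gap in the sufficiency direction.} Your necessity half is fine, apart from a slip noted at the end. The sufficiency half, however, is circular at exactly the step that carries its content. You reduce everything to showing $x_n\in(0,1]$. You then propose to get $0\le x_n\le 1$ from weak monotonicity of $V(\mathbf d)=\sum_k -d_k(-\beta)^{-k}$ for $\prec$ on sequences obeying the bounds. The only tool you offer for that monotonicity is the first-difference estimate, and this estimate needs the tails $\sigma^m\mathbf d$ to have values in $[0,1]$; you even phrase it for ``two sequences with values in $[0,1]$''. Without this a priori bound, monotonicity genuinely fails. On the full shift it is false: for $1<\beta<2$ the values of $\{1,2\}^{\N}$ fill an interval of length $1/(\beta-1)>1$, so the value ranges of words beginning with $1$ and with $2$ overlap, although all the former are $\prec$ all the latter. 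Proving that the constraints $\pi^*_{-\beta}(0)\preceq\sigma^n\mathbf d\preceq\pi_{-\beta}(1)$ force $V(\sigma^n\mathbf d)\in[0,1]$ \emph{is} the sufficiency direction. So the real obstacle is this bound, not the equality $x_n=0$, and as written you never establish $0\le x_n\le 1$.

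\textbf{A non-circular fix.} Let $X$ be the set of $\mathbf d\in\mathcal A^{\N}$ with $\pi^*_{-\beta}(0)\preceq\sigma^n\mathbf d\preceq\pi_{-\beta}(1)$ for all $n\ge 0$. It is compact, since sets $\{\mathbf u:\mathbf u\preceq\mathbf v\}$ are closed, and $\sigma$-invariant. As $V$ is continuous, $E:=\max_{\mathbf d\in X}\max\{V(\mathbf d)-1,\,-V(\mathbf d),\,0\}$ is attained. Suppose $E>0$ and take a maximizer. Compare it with $\pi_{-\beta}(1)$ if its value is $1+E$, or with $\pi^*_{-\beta}(0)$ if its value is $-E$, at the first differing index $m$. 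The tails of these reference words have values in $[0,1]$, because by your own computation $\pi^*_{-\beta}(0)=\lim_{y\to0^+}\pi_{-\beta}(y)$. The tail of the maximizer has value in $[-E,1+E]$. The parity rule for $\prec$ then bounds the excess by $\beta^{-m}E<E$, a contradiction, so $E=0$.

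\textbf{The tie analysis, simplified.} Once $E=0$, your tie analysis works, and it is simpler than you suggest. A tie with $\mathbf c=\pi^*_{-\beta}(0)$ at index $m$ forces $V(\sigma^m\mathbf c)=0$ when $m$ is odd and $V(\sigma^m\mathbf c)=1$ when $m$ is even. Both are excluded by the explicit tails of $\mathbf c$:
\begin{itemize}
\item In the odd-period case, $\mathbf c$ has even period $p+1$. Its tail value is $0$ only at multiples of $p+1$ and $1$ only at indices $\equiv 1 \pmod{p+1}$, so the parities never match.
\item Otherwise, $V(\sigma^m\mathbf c)=T^{m-1}_{-\beta}(1)\in(0,1]$ for $m\ge1$. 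For even $m$ this equals $1$ only if $\pi_{-\beta}(1)$ is purely periodic with odd period, which is the other case.
\end{itemize}
Do not try to identify the extremal tails themselves with $\pi_{-\beta}(1)$ or $\pi^*_{-\beta}(0)$. In the odd-period case, $b_1\cdots b_{p-1}(b_p-1)\pi^*_{-\beta}(0)\in X$ also has value $1$.

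\textbf{Branch convention.} With the paper's $T_{-\beta}(x)=-\beta x+\lf\beta x\rf+1$, the digit-$d$ set is $[\frac{d-1}{\beta},\frac{d}{\beta})\cap(0,1]$, not $(\frac{d-1}{\beta},\frac{d}{\beta}]$. A decreasing affine map cannot send a left-open, right-closed interval onto $(0,1]$. Your breakpoint analysis already uses the correct convention ($T_{-\beta}(j/\beta)=1$, and the digit drops to $b_p-1$ when $j/\beta$ is approached from below), so only the statement needs fixing.

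For comparison, the paper proves nothing here. It simply transports Ito--Sadahiro's theorem through the conjugacy $\phi(x)=\frac{1}{\beta+1}-x$, with digits shifted by $1$.
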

Equip the left shift map $\sigma$ for $\mathcal{A}^{\mathbb{N}} = \{1, 2, \ldots, \lfloor \beta \rfloor + 1\}^{\mathbb{N}}$. Then,  the $(-\beta)$-shift
\begin{equation}\label{eq:S-beta-odd}
S_{-\beta} = \left\{ d_1d_2\cdots \in \mathcal{A}^{\mathbb{N}}: (1b_1\ldots (b_p-1))^\infty \prec \sigma^n(d_1d_2\cdots) \preceq \pi_{-\beta}(1) \text{ for all } n \ge 0 \right\},
\end{equation} if $\pi_{-\beta}(1)=(b_1\ldots b_p)^\infty$ and $p$ is odd. Otherwise,
\begin{equation}\label{eq:S-beta-non-odd}
S_{-\beta} = \left\{ d_1d_2\cdots \in \mathcal{A}^{\mathbb{N}}: 1\pi_{-\beta}(1) \prec \sigma^n(d_1d_2\cdots) \preceq \pi_{-\beta}(1) \text{ for all } n \ge 0 \right\}.
\end{equation}

Recall that a number $\beta > 1$ is called a simple negative $\beta$ number if $\pi_{-\beta}(1)$ is periodic. The importance of simple $(-\beta)$ numbers lies in their connection to shifts of finite type. Frougny and Lai \cite{Frougny-Lai-2011} proved  $(-\beta)$-shift $S_{-\beta}$ is an SFT if and only if $\beta$ is a simple negative $\beta$ number.
This result is analogous to Parry's theorem for positive $\beta$-expansions \cite{Parry-1960}, establishing SFTs as a natural class with desirable dynamical properties. The concept of \emph{matching} for negative $\beta$-expansions, which we will explore, is another important property related to the existence of absolutely continuous invariant measures and topological conjugacies.

So, it is important to study the $(-\beta)$-expansion of $1$.
The following results by Steiner \cite{Steiner-2013} are crucial for understanding the order structure of $(-\beta)$-expansions.

\begin{proposition}{\rm({\cite[Corollary 1]{Steiner-2013}})}\label{prop:beta-expansion-1}
An integer sequence $d_1d_2\cdots$ is the $(-\beta)$-expansion of 1 for some $\beta > 1$ if and only if it satisfies:
\begin{enumerate}
    \item $d_kd_{k+1}\cdots \preceq d_1d_2\cdots$ for all $k \ge 2$;
    \item $d_1d_2\cdots \succ w_1w_2\cdots := \lim_{n \to \infty} \phi^n(2)$, where $\phi(2)=211$, $\phi(1)=2$;
    \item $d_1d_2\cdots \notin \{d_1\cdots d_k, d_1\cdots d_{k-1}(d_k - 1)\}^\infty \setminus \{(d_1\cdots d_k)^\infty\}$ for all $k$ with $(d_1\cdots d_k)^\infty \succ w$;
    \item $d_1d_2\cdots \notin \{d_1\cdots d_k1, d_1\cdots d_{k-1}(d_k+1)\}^\infty$ for all $k$ with $(d_1\cdots d_{k-1}(d_k + 1))^\infty \succ w$.
\end{enumerate}
\end{proposition}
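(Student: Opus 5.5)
Since this proposition is quoted from \cite{Steiner-2013}, the paper may simply cite it; the plan below is how I would reprove it, following the strategy used for Parry's characterization of greedy $\beta$-expansions of $1$, adapted to the alternating order. The basic tool is the map $\beta\mapsto\pi_{-\beta}(1)$. The first step is to show that this map is strictly increasing with respect to $\prec$. I would compare the digits of $1$ under $T_{-\beta}$ and $T_{-\beta'}$ for $\beta<\beta'$, using that $T_{-\beta}$ is order-reversing on each branch, which is exactly why the alternating order appears. Its discontinuities occur only at parameters where some $T_{-\beta}^k(1)$ hits a branch endpoint.

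\emph{Necessity.} Condition (1) follows from admissibility: $T_{-\beta}^{k-1}(1)\le 1$ and the order-preservation $x<y\Rightarrow\pi_{-\beta}(x)\prec\pi_{-\beta}(y)$ give $\sigma^{k-1}\pi_{-\beta}(1)\preceq\pi_{-\beta}(1)$. For condition (2), I would show that $\lim_{\beta\downarrow1}\pi_{-\beta}(1)=w$, the fixed point of $2\mapsto211$, $1\mapsto2$, via a renormalization argument. The substitution $\phi$ encodes how the first return map of $T_{-\beta}$ to a suitable subinterval is again a negative $\beta$-map with a smaller parameter. Strict monotonicity then gives $\pi_{-\beta}(1)\succ w$ for every $\beta>1$. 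Conditions (3) and (4) describe one-sided limits of $\beta\mapsto\pi_{-\beta}(1)$ at the jump parameters, which are not attained.
\begin{enumerate}
\item If $\pi_{-\beta}(1)=(d_1\cdots d_k)^\infty$, then nearby parameters produce sequences in $\{d_1\cdots d_k,\,d_1\cdots d_{k-1}(d_k-1)\}^\infty$. Only the periodic word itself is realized; this is condition (3).
\item At the other type of discontinuity, where the orbit of $1$ reaches the left endpoint $0$ and is continued via $T_{-\beta}(0):=1$, the relevant limits are words over $\{d_1\cdots d_k1,\,d_1\cdots d_{k-1}(d_k+1)\}$. None of these is realized; this is condition (4).
\end{enumerate}
To make these statements precise, I would analyse $T_{-\beta}^k(1)$ as a piecewise affine function of $\beta$ near the critical parameters.

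\emph{Sufficiency.} Given $d$ satisfying (1)--(4), let $f_d(x)=\sum_{n\ge1}-d_n/(-x)^n$. Using (2) and the monotonicity above, I would show that $f_d(\beta)=1$ has a root $\beta>1$; I would pick $\beta$ as the supremum of the parameters with $\pi_{-\beta}(1)\preceq d$. Then I would prove $\pi_{-\beta}(1)=d$ by induction on the digits. At each step, condition (1) guarantees $\sigma^{k}d\preceq d$, which translates into the points $f_d$-evaluated on shifts lying in $(0,1]$ and having the correct digit. The only way the induction can fail is at a boundary case, where some tail value equals $0$ or $1$ exactly and the true expansion switches to a different word. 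Those boundary cases are exactly the sequences excluded in (3) and (4).

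The main obstacle is this boundary analysis, together with identifying $w$ as the infimum. Showing that the excluded sets in (3) and (4) are precisely the limit points of the non-surjective jumps, and no more, requires a careful combinatorial description of the gaps of the monotone map $\beta\mapsto\pi_{-\beta}(1)$. It also needs the $\phi$-renormalization for parameters near $1$, where the alternating order interacts with even and odd period lengths. I would first treat the case $\beta>2$ directly, where all gaps come from periodic orbits of $1$. I would then use the renormalization by $\phi$ to transfer the analysis down to $(1,2]$.
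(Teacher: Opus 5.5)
The paper does not prove this proposition; it only quotes it from \cite{Steiner-2013}. The more important problem is that condition (3) as printed is false, and your plan asserts exactly the false step.

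\textbf{Where the argument for (3) breaks.} You claim that near a parameter $\beta_0$ with $\pi_{-\beta_0}(1)=(d_1\cdots d_k)^\infty$, the expansions lie in $\{d_1\cdots d_k,\,d_1\cdots d_{k-1}(d_k-1)\}^\infty$. At such a $\beta_0$ we have $\beta_0T^{k-1}_{-\beta_0}(1)=d_k-1\in\mathbb{Z}$. On the side where $\beta T^{k-1}_{-\beta}(1)$ drops just below $d_k-1$, the $k$-th digit does become $d_k-1$. But then $T^k_{-\beta}(1)=d_k-1-\beta T^{k-1}_{-\beta}(1)$ is close to $0^+$. This forces the next digit to be $1$, and only after that does the orbit return near $1$. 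So the competing blocks are $d_1\cdots d_k$ and $d_1\cdots d_{k-1}(d_k-1)1$, of lengths $k$ and $k+1$. Condition (4) is this same pair of blocks, seen from a sequence that begins with the longer one. In particular there are not two types of discontinuity: on $(0,1]$ the orbit of $1$ never reaches $0$. It only approaches $0^+$ on one side of a periodic parameter, and (3) and (4) are the two halves of one exclusion.

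\textbf{Counterexample to necessity.} Take $\beta=1+\sqrt2$, the case $q=2$, $p=1$ of Theorem \ref{thm:measure}. Then $T_{-\beta}(1)=2-\sqrt2=T^2_{-\beta}(1)$, so $\pi_{-\beta}(1)=3\,2^\infty$. This lies in $\{3,2\}^\infty\setminus\{3^\infty\}$, and $3^\infty\succ w$. So (3) with $k=1$ excludes a genuine expansion of $1$, and your claim that ``only the periodic word itself is realized'' cannot be proved.

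\textbf{Counterexample to sufficiency.} Take $3(21)^\infty$, which is the limit of $\pi_{-\beta}(1)$ as $\beta\downarrow 2$. It satisfies (1), (2), (4) and the printed (3). However, the equation $\sum_n -d_n(-\beta)^{-n}=1$ gives $(\beta-2)(\beta^2-\beta-1)=0$. Since $d_1=3$ requires $\beta\in[2,3)$, this forces $\beta=2$, and $\pi_{-2}(1)=3^\infty$. So your sufficiency induction would accept a sequence that is not an expansion.

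\textbf{Repair and remaining work.} The condition that makes the statement true, and that is symmetric with (4), is
$d_1d_2\cdots\notin\{d_1\cdots d_k,\,d_1\cdots d_{k-1}(d_k-1)1\}^\infty\setminus\{(d_1\cdots d_k)^\infty\}$.
Even with this correction, your proposal only announces the substantive steps: the strict monotonicity of $\beta\mapsto\pi_{-\beta}(1)$, the $\phi$-renormalization identifying $w$ as the unattained infimum, and the exact description of the gaps of this map. Each still has to be actually proved.
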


\begin{proposition}{\rm({\cite[Theorem 3]{Steiner-2013}})} \label{prop:order-relation}
For $\beta,\, \beta' > 1$, we have $\pi_{-\beta}(1) \prec \pi_{-\beta'}(1)$ if and only if $\beta < \beta'$.
\end{proposition}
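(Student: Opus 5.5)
The statement immediately above is Proposition~\ref{prop:order-relation}, Steiner's monotonicity result: $\pi_{-\beta}(1)\prec\pi_{-\beta'}(1)$ if and only if $\beta<\beta'$. The plan is to prove the forward direction ``$\beta<\beta'\Rightarrow \pi_{-\beta}(1)\prec\pi_{-\beta'}(1)$''. The converse then follows at once. The map $\beta\mapsto\pi_{-\beta}(1)$ is injective, since the expansion determines $\beta$ through $1=\sum_n -d_n/(-\beta)^n$ and this equation has a unique solution $\beta>1$ (see below). Hence $\beta\ge\beta'$ gives either equality of the two expansions or the reversed strict inequality, and $\prec$ is a total order.

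For the forward direction I would compare the orbits $x_n=T_{-\beta}^{n}(1)$ and $x'_n=T_{-\beta'}^{n}(1)$, with digits $d_n=\lf\beta x_{n-1}\rf+1$ and $d'_n=\lf\beta' x'_{n-1}\rf+1$. Let $k$ be the first index with $d_k\ne d'_k$. If no such $k$ exists, the two digit sequences coincide, and the identities $1=\sum_n -d_n/(-\beta)^n=\sum_n -d_n/(-\beta')^n$ should force $\beta=\beta'$; this is the uniqueness claim used above.

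Assume $d_1\cdots d_{k-1}=d'_1\cdots d'_{k-1}$. On the common cylinder, unwinding $T^{j}_{-\beta}$ gives the explicit formula
\[
x_{j}=(-\beta)^{j}\Bigl(1+\sum_{i=1}^{j}\frac{d_i}{(-\beta)^i}\Bigr),
\]
and the same formula holds for $x'_j$ with $\beta'$ and the same digits. I would then prove by induction on $j<k$ that $(-1)^j(x'_j-x_j)>0$, i.e.\ the orbit of $\beta'$ lies above that of $\beta$ at even times and below at odd times. For $j=1$ this is $x_1-x'_1=(\beta'-\beta)>0$. In the induction step,
\[
x'_{j+1}-x_{j+1}=-\beta' x'_j+\beta x_j=-\beta'(x'_j-x_j)-(\beta'-\beta)x_j .
\]
For odd $j$ the first term is positive, but the second term pushes the other way. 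So the clean sign propagation only works at even steps, and this is where care is needed.

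This sign analysis is the main obstacle. My first attempt would be to work with the tail quantities $t_j=(-\beta)^{-j}x_j=1+\sum_{i\le j}d_i(-\beta)^{-i}$ and exploit monotonicity in $\beta$ of the alternating sum. Alternatively, I would follow Steiner and characterize $\pi_{-\beta}(1)$ as the $\preceq$-largest sequence in $S_{-\beta}$, and show that $S_{-\beta}\subseteq S_{-\beta'}$ when $\beta<\beta'$. For this I would check directly that any $(-\beta)$-admissible sequence evaluates to a point of $(0,1]$ when valued in base $-\beta'$, by viewing the value as a continuous family in $\beta$ and using the fact that a boundary crossing at $0$ or $1$ cannot occur, by the admissibility inequalities \eqref{eq:adimissible}.

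At the first differing index $k$, the digit comparison then gives $d_k<d'_k$ when $k$ is odd and $d_k>d'_k$ when $k$ is even. The case $x_{k-1}=x'_{k-1}$ is handled by the $\beta'x\ge\beta x$ comparison combined with the floor. By the definition of the alternating order, this is exactly $\pi_{-\beta}(1)\prec\pi_{-\beta'}(1)$.
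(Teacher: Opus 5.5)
The paper gives no proof of this proposition; it only cites Steiner's Theorem~3. Your argument therefore has to stand on its own, and its central step fails. The inductive claim $(-1)^j(x'_j-x_j)>0$ for $1\le j<k$ is not merely hard to propagate at odd $j$; it is false.

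Take $\beta=1.2$ and $\beta'=1.21$. Both expansions of $1$ begin with $2\,1\,1\,2\,2$, so $k\ge 6$. On this common cylinder, $x_4=2-\gamma+\gamma^2(\gamma-1)^2$ as a function of the base $\gamma$. Its derivative $-1+2\gamma(\gamma-1)(2\gamma-1)$ is negative at $\gamma=1.2$, and numerically $x_4=0.8576>x'_4\approx 0.8546$, although $j=4$ is even.

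The digit comparison at $j=k-1$ really requires the product inequality $(-1)^j(\beta'x'_j-\beta x_j)\ge 0$. This also fails as an invariant: at $j=3$ one has $\beta x_3=1.1424<\beta'x'_3\approx 1.1454$. So the orbits of two nearby bases on a common cylinder are not consistently ordered, and no step-by-step comparison of this type can deliver the digit inequality.

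Your final paragraph also fails even if the invariant is granted. For even $k$, the digits $d_k,d'_k$ are decided by $\beta x_{k-1}$ versus $\beta'x'_{k-1}$. The facts $x'_{k-1}<x_{k-1}$ and $\beta'>\beta$ do not order these two products.

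The remaining ingredients are unsupported as well.
\begin{itemize}
\item You only assert that $1=\sum_n -d_n/(-\beta)^n$ has a unique root $\beta>1$ (``should force''). The series alternates, so there is no evident monotonicity in $\beta$. You need this claim inside the forward direction to rule out identical expansions, not just for the converse.
\item The alternative route via $S_{-\beta}\subseteq S_{-\beta'}$ is circular as sketched. Admissibility inequalities say something about values in base $-\gamma$ only for sequences already known to be $(-\gamma)$-admissible. They are also phrased through $\pi_{-\gamma}(1)$, whose position relative to $\pi_{-\beta}(1)$ is exactly what is being proved. Moreover, this route would give only $\preceq$.
\end{itemize}

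An argument that works with tools the paper already quotes goes through entropy.
\begin{enumerate}
\item Suppose $\beta<\beta'$ but $\pi_{-\beta'}(1)\preceq\pi_{-\beta}(1)$. Prefixing the digit $1$ reverses the alternating order, so $1\pi_{-\beta}(1)\preceq 1\pi_{-\beta'}(1)$. Directly from the definition, $1\pi_{-\beta'}(1)\preceq\pi^*_{-\beta'}(0)$.
\item The Ito--Sadahiro description then places $S_{-\beta'}$ inside the closed shift $X_\beta=\{d: 1\pi_{-\beta}(1)\preceq\sigma^n(d)\preceq\pi_{-\beta}(1)\ \text{for all } n\ge 0\}$.
\item $X_\beta$ contains $S_{-\beta}$ and differs from it by at most countably many sequences. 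In the odd-period case, the only points of $X_\beta$ lying between $1\pi_{-\beta}(1)$ and $\pi^*_{-\beta}(0)$ are countably many explicit eventually periodic sequences.
\item Hence $X_\beta$ still has entropy $\log\beta$. Use the variational principle: an ergodic measure charging a countable set is carried by a periodic orbit, and $S_{-\beta}$ and its closure have the same language.
\item Proposition~\ref{prop:entropy} then gives $\log\beta'\le\log\beta$, a contradiction.
\end{enumerate}
This proves the strict forward implication. Injectivity and the converse follow from it, so no statement about roots of the series is needed.
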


The standard definition for the topological entropy of continuous maps \cite{de1993} using $(n,\epsilon)$-separated sets can be used to define entropy for piecewise continuous maps. An alternative way of calculating topological entropy, which is particularly convenient for the symbolic approach, is via the lap numbers\cite{milnor1988} or the cardinality of finite words, that is,
$$ h_{top}(f)=h_{top}(\sigma|_{\Omega(f)})=\displaystyle{\lim_{n \to \infty}  \frac{\log \left( \# \Omega(f)|_{n}  \right)}{n}}=\displaystyle{\inf_{n}  \frac{\log \left( \# \Omega(f)|_{n}  \right)}{n}}.
$$
The existence of the limit is due to the sub-additivity of the sequence $\{ \log(\# \Omega(f)\vert_{n} \}_{n \in \mathbb{N}}$.
For simplicity, we denote $h_{top}(\sigma|_{\Omega(f)})$ as $h_{top}(\Omega(f))$ when we do not want to mention the left shift map $\sigma$.
In \cite{Takahashi-1980} Takahashi proved the equality between the topological entropy of a piecewise continuous dynamical system and the topological entropy of an appropriate subshift. The following results give the topological entropy of $T_{-\beta}$ and $S_{-\beta}$.

\begin{proposition}\label{prop:entropy}
\begin{enumerate}
  \item {\rm({\cite[Proposition 3.7]{Shultz-2007}})}  For $\beta>1$, let $T$ be a piecewise linear map with slope $\pm\beta$. Then the topological entropy of $T$ is equal to $\log\beta$.
  \item  {\rm({\cite[Theorem 5.13]{Frougny-Lai-2011}})}  The topological entropy of $S_{-\beta}$ is equal to $\log\beta$.
\end{enumerate}
\end{proposition}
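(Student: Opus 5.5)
Both parts of Proposition \ref{prop:entropy} reduce to counting the monotone branches (laps) of $T^n$. I will prove (1) by a two-sided lap count, then deduce (2) from the coding $\pi_{-\beta}$.

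\textbf{Step 1: reduce to laps.} Let $T:[0,1]\to[0,1]$ be piecewise linear with slope $\pm\beta$ on finitely many intervals. Let $\ell_n$ be the number of maximal intervals of monotonicity and continuity of $T^n$. By the Misiurewicz--Szlenk lap-number formula, extended to piecewise monotone, piecewise continuous maps as in Takahashi \cite{Takahashi-1980}, we have $h_{top}(T)=\lim_n\frac1n\log\ell_n$. So it suffices to prove
$$\beta^n\le \ell_n\le C\,n\,\beta^n$$
for some constant $C$.

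\textbf{Step 2: lower bound.} On each lap $J$ of $T^n$ the map $T^n$ is affine with slope $\pm\beta^n$. Hence $|T^n(J)|=\beta^n|J|\le 1$. Summing over laps gives
$$\beta^n=\sum_J \beta^n|J|\le \ell_n,$$
so $h_{top}(T)\ge\log\beta$.

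\textbf{Step 3: upper bound (the main obstacle).} This is the hard direction, since laps can be very short and are not controlled by length alone. The first thing I would try is to classify the laps of $T^n$ by the last time their image was cut by a discontinuity. Let $N$ be the number of discontinuities of $T$.
\begin{itemize}
\item A lap $J$ of $T^n$ splits into laps of $T^{n+1}$ only at preimages of these $N$ points.
\item The pieces are of two kinds: those whose image $T^{n+1}(\cdot)$ has an endpoint at $T(c^\pm)$ for some discontinuity $c$, and those whose image is a full branch of $T$. There are at most $2N$ one-sided boundary orbits.
\item A lap whose last cut occurred at time $n-k$ has image equal to $T^{k}$ of a one-sided neighbourhood of a critical value. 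Its length is therefore at least the length of a lap of $T^k$ restricted there.
\end{itemize}
Then I would sum $\beta^{n}|J|$ over these families, grouped by the time $k$ of the last cut. Each group contributes at most of order $\beta^n$, giving $\ell_n\le C n\beta^n$.

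If this bookkeeping becomes messy, the fallback is Ruelle's inequality combined with the variational principle, applied on Hofbauer's Markov extension: every ergodic measure $\mu$ has $h_\mu\le\int\log|T'|\,d\mu=\log\beta$. Either route yields $h_{top}(T)=\log\beta$.

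\textbf{Step 4: part (2).} Since $\pi_{-\beta}$ is order preserving and injective, each word in $B_n(S_{-\beta})$ is the prefix of some $\pi_{-\beta}(x)$. The set of $x\in(0,1]$ with a given digit prefix $d_1\cdots d_n$ is a single interval on which $T_{-\beta}^n$ is affine. This gives a bijection between $B_n(S_{-\beta})$ and the laps of $T_{-\beta}^n$, up to finitely many boundary words coming from the closure in \eqref{eq:adimissible}. Such boundary effects change $|B_n(S_{-\beta})|$ by at most a factor $O(n)$. Applying (1) to $T=T_{-\beta}$, whose slope is $-\beta$ on every branch, gives $h_{top}(S_{-\beta})=\log\beta$.
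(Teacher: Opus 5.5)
The paper does not prove this proposition: it quotes (1) from Shultz and (2) from Frougny--Lai as two separate facts. Your proposal actually argues, and two parts of it are correct and worth keeping:
\begin{itemize}
\item the lower bound of Step~2, obtained by summing $\beta^n|J|=|T^nJ|\le 1$ over the $n$-cylinders;
\item Step~4, which shows that (2) is just (1) for $T=T_{-\beta}$, because $B_n(S_{-\beta})$ consists of the labels of nonempty $n$-cylinders and $T_{-\beta}^n$ is affine of slope $(-\beta)^n$ on each.
\end{itemize}
Step~4 needs one convention fixed. $\ell_n$ has to count $n$-cylinders of the branch partition, not maximal intervals of continuity and monotonicity, because these differ when matching occurs. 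For example, if $\beta^2=q\beta+p$ with $p\le q$, both one-sided limits of $T_{-\beta}^3$ at $1/\beta$ equal $T_{-\beta}(1)$. So $T_{-\beta}^3$ is continuous and decreasing across $1/\beta$, and two cylinders merge into one lap. With cylinders, Steps~1, 2 and~4 go through as written.

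The first version of Step~3 does not work. After the last cut at time $n-k$, the interval $U=T^{n-k}(J)$ has one endpoint at a critical value, and $T^k$ is continuous and monotone on $U$. This only says that $U$ is \emph{contained in} a lap of $T^k$, which is an upper bound on $|U|$. Bounding the number of laps through $\sum_J\beta^n|J|\le\beta^n$ needs a \emph{lower} bound on $|T^nJ|$, and none exists in general. For $T_{-\beta}$ the endpoints of the images $T_{-\beta}^n(J)$ lie in $\{0,1\}\cup\mathcal{O}_{-\beta}$, and an image such as $(0,T_{-\beta}^m(1)]$ is as short as $T_{-\beta}^m(1)$ is close to $0$. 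Without length information, grouping by last cut time only gives recursions like $\ell_n\le (N+1)\sum_{m<n}\ell_m$, whose growth rate is governed by $N$, not by $\beta$. The claim that each group contributes $O(\beta^n)$ is the whole content of the theorem. The bound $\ell_n\le Cn\beta^n$ is neither proved nor needed, since $\beta^n e^{o(n)}$ suffices.

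What actually proves the upper bound is your fallback, and it is sound:
\begin{itemize}
\item apply the variational principle to the compact coding shift;
\item transfer measures to $T$: the coding map is injective and intertwines $\sigma$ with $T$ off a countable set, and ergodic measures charging a countable set live on periodic orbits and have zero entropy;
\item use the piecewise-monotone Ruelle inequality $h_\mu\le\max(0,\int\log|T'|\,d\mu)=\log\beta$, for instance from the Hofbauer--Raith formula $\dim\mu=h_\mu/\lambda_\mu\le 1$.
\end{itemize}
This imports a theorem of the same depth as Shultz's proposition. So, relative to the paper, what you gain is the elementary lower bound and the observation that (2) is a corollary of (1).

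If you want the upper bound to use the constant slope as directly as the lower bound does, use horseshoes. If $T^n$ has an $s$-horseshoe over an interval $I$, the $s$ disjoint subintervals each have length at least $|I|\beta^{-n}$, so $s\le\beta^n$. For piecewise continuous piecewise monotone maps, entropy is approximated by such horseshoes (Misiurewicz--Ziemian).
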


\section{The coincidence of invariant measures} \label{sec:measure}
In this section, we will give a complete proof of Theorem \ref{thm:measure}.
\begin{proposition}\label{prop:suff}
If $\beta_1>1$ is the root of equation $x^2-qx-p=0$, where $p,q\in\mathbb{N}$ with $p\leq q$, and $\beta_2=\beta_1+1$, then the invariant measures for $T_{-\beta_1}, T_{-\beta_2}$ coincide.
\end{proposition}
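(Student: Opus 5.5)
The plan is to compute both densities explicitly from formula \eqref{eq:nonor-den}, using the orbit of $1$ described in the introduction, and to check that they are proportional. Throughout, write $\beta:=\beta_1$. Since $1\le p\le q$ and $\beta^2=q\beta+p$, we have $q<\beta<q+1$. Hence $\lf\beta\rf=q$ and $\lf\beta_2\rf=q+1$.

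\textbf{Step 1: orbit of $1$ under $T_{-\beta_1}$.} Set $a:=q+1-\beta\in(0,1)$. Then $T_{-\beta}(1)=-\beta+q+1=a$. Using $\beta^2=q\beta+p$, we get
\[
\beta a=(q+1)\beta-\beta^2=\beta-p\in(q-p,\,q-p+1),
\]
so $\lf\beta a\rf=q-p$ and $T_{-\beta}(a)=-(\beta-p)+q-p+1=a$. Thus $T^n_{-\beta_1}(1)=a$ for all $n\ge1$.

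Plugging this into \eqref{eq:nonor-den}, and using $\sum_{n\ge1}(-\beta)^{-n}=-\frac{1}{\beta+1}$, gives
\[
h_{-\beta_1}(x)=1-\frac{1}{\beta+1}\mathds{1}_{(0,a]}(x)=\frac{\beta}{\beta+1}\mathds{1}_{(0,a]}(x)+\mathds{1}_{(a,1]}(x).
\]

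\textbf{Step 2: orbit of $1$ under $T_{-\beta_2}$.} Here $T_{-\beta_2}(1)=-(\beta+1)+q+2=a$, the same point as in Step 1. Next, again by $\beta^2=q\beta+p$,
\[
\beta_2a=(\beta+1)(q+1-\beta)=q+1-p,
\]
which is an integer in $[1,q]$. Therefore $T_{-\beta_2}(a)=-(q+1-p)+(q+1-p)+1=1$, and the orbit of $1$ is $1,a,1,a,\dots$ with period $2$.

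Summing the even terms (where $T^n(1)=1\ge x$ always holds) and the odd terms (where the condition is $a\ge x$) in \eqref{eq:nonor-den} gives
\[
h_{-\beta_2}(x)=\frac{1}{1-\beta_2^{-2}}\Big(1-\frac{1}{\beta_2}\mathds{1}_{(0,a]}(x)\Big).
\]
On $(0,a]$ the factor in parentheses equals $1-\frac1{\beta_2}=\frac{\beta}{\beta+1}$, and on $(a,1]$ it equals $1$.

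\textbf{Step 3: comparison.} Both $h_{-\beta_1}$ and $h_{-\beta_2}$ are step functions with the single breakpoint $a$. Each takes the value $\frac{\beta}{\beta+1}$ times a constant on $(0,a]$ and the same constant on $(a,1]$. Hence $h_{-\beta_2}=c\,h_{-\beta_1}$ with $c=(1-\beta_2^{-2})^{-1}$. After dividing by the normalizing constants $K_{-\beta_1}$ and $K_{-\beta_2}$, we get $\widetilde h_{-\beta_1}=\widetilde h_{-\beta_2}$. Since each $T_{-\beta_i}$ has a unique absolutely continuous invariant probability measure, this gives $\nu_{-\beta_1}=\nu_{-\beta_2}$.

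\textbf{Main obstacle.} The only delicate point is the boundary behaviour of $T_{-\beta_2}$ at $a$, where $\beta_2a$ is an integer. One must check that the convention $x\mapsto-\beta x+\lf\beta x\rf+1$ indeed sends $a$ to $1$, and not to $0$ (which lies outside the domain). The formula in \eqref{eq:def-beta} does give $1$.

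The other care point is the non-strict inequality $T^n(1)\ge x$ in \eqref{eq:nonor-den}. This only changes the densities at the single point $x=a$, which is irrelevant for the measures.
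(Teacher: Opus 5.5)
Your proposal is correct and follows the same route as the paper. You compute the orbit of $1$ under both maps: $a=q+1-\beta_1$ is a fixed point of $T_{-\beta_1}$, and $\{1,a\}$ is a $2$-cycle of $T_{-\beta_2}$. You then insert these orbits into the density formula to obtain two step functions with the single breakpoint $a$, check that they are proportional, and conclude. You also supply details the paper leaves implicit: that $\lfloor\beta_1 a\rfloor=q-p$, that $\beta_2 a=q+1-p$ is an integer, and the explicit proportionality constant $(1-\beta_2^{-2})^{-1}$.
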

\begin{proof}
Note that $\beta_1^2 - q \beta_1 - p =0$, where $p,q\in\mathbb{N}$ with $p\leq q$.
Then we have $q< \beta_1 < q+1$, $T_{-\beta_1} (1) = -\beta_1 + q+1$, and $T_{-\beta_1}^2 (1) =-\beta_1 + q+1$. This implies that $T_{\beta_2}^n (1) = \beta_1 -q$ for all $n \ge 1$.
It follows from \eqref{eq:nonor-den} that
$${h}_{-\beta_1}(x) = \frac{\beta_1}{\beta_1+1}\mathds{1}_{(0,-\beta_1+q+1]}(x) +\mathds{1}_{(-\beta_1+q+1,1]}(x)$$
where $\mathds{1}_{A}(x)$ is the characteristic function of set $A$.

Recall that $\beta_2=\beta_1+1$. It follows that $T_{-\beta_2}(1)=-\beta_1 + q+1$ and $T_{-\beta_2}^2(1)=1$. So,
$$T_{-\beta_2}^{2k+1} (1) = -\beta_1 + q+1\quad \text{and}\quad T_{-\beta_2}^{2k} (1) = 1\quad \text{for all }k\geq 0.$$
Again by \eqref{eq:nonor-den} we obtain
$${h}_{-\beta_2}(x) = \frac{\beta_2}{\beta_2+1}\mathds{1}_{(0,-\beta_1+q+1]}(x) +\frac{\beta_2^2}{\beta_2^2-1}\mathds{1}_{(-\beta_1+q+1,1]}(x)$$
Thus we are led to the conclusion that $\nu_{\beta_1} = \nu_{\beta_2}$.
\end{proof}

Before proceeding with the detailed proof, we give some properties of initial density function.

\begin{proposition}\label{prop:property}
  Let $\beta > 1$ be a non-integer. Then we have

    {\rm(i)} $\displaystyle\lim_{x \to 0^{+}} {h}_{-\beta}(x) =\frac{\beta}{\beta+1}$;


    {\rm(ii)} ${h}_{-\beta}(x)$ is constant on an open interval $(a,b)\subset(0,1)$ if and only if $(a,b) \cap \mathcal{O}_{-\beta} = \emptyset$.
\end{proposition}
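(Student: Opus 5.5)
For (i), recall that every point of the orbit $\mathcal{O}_{-\beta}=\{T_{-\beta}^n(1):n\ge 0\}$ lies in $(0,1]$. So for each fixed $n$, the condition $T_{-\beta}^n(1)\ge x$ holds once $x$ is small enough. Fix $N$ and put $\delta_N:=\min_{0\le n\le N}T_{-\beta}^n(1)>0$. For $0<x<\delta_N$, all indices $n\le N$ appear in the sum \eqref{eq:nonor-den}. Hence
$$\Big|h_{-\beta}(x)-\sum_{n=0}^{\infty}\frac{1}{(-\beta)^n}\Big|\le \sum_{n>N}\beta^{-n}=\frac{\beta^{-N}}{\beta-1},$$
and the right-hand side tends to $0$ as $N\to\infty$. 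The full alternating geometric series equals $\frac{1}{1+1/\beta}=\frac{\beta}{\beta+1}$, which gives (i). Equivalently, one can apply dominated convergence to the indicator functions $\mathds{1}_{\{T_{-\beta}^n(1)\ge x\}}$ with the summable weights $\beta^{-n}$.

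For the ``if'' direction of (ii), assume $(a,b)\cap\mathcal{O}_{-\beta}=\emptyset$. Then for every $n$, the point $T_{-\beta}^n(1)$ lies either $\le a$ or $\ge b$. Therefore the index set $\{n: T_{-\beta}^n(1)\ge x\}$ is the same for all $x\in(a,b)$, and $h_{-\beta}$ is constant there.

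For the ``only if'' direction, suppose some $c=T_{-\beta}^m(1)\in(a,b)$, with $m$ chosen minimal. The plan is to show that $h_{-\beta}$ has a nonzero jump at $c$. By the same dominated-convergence argument as in (i), the right limit is $h_{-\beta}(c^+)=\sum_{n:\,T_{-\beta}^n(1)>c}(-\beta)^{-n}$. Hence
$$h_{-\beta}(c)-h_{-\beta}(c^+)=\sum_{n\in I_c}\frac{1}{(-\beta)^n},\qquad I_c:=\{n\ge0: T_{-\beta}^n(1)=c\}.$$
The key step is to show this sum is nonzero, using the structure of $I_c$. If $T_{-\beta}^k(1)=c$ for some $k>m$, then $T_{-\beta}^{k-m}(c)=c$, so $c$ is periodic. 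Let $p$ be its minimal period. Then $I_c=\{m+jp:j\ge0\}$, because the orbit from time $m$ onward is exactly periodic with minimal period $p$. So either $I_c=\{m\}$, and the jump is $(-\beta)^{-m}\neq0$, or the jump is
$$\frac{(-\beta)^{-m}}{1-(-\beta)^{-p}},$$
which is also nonzero since $\beta>1$. In either case $h_{-\beta}$ is discontinuous at the interior point $c$, so it is not constant on $(a,b)$.

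The only delicate point is this nonvanishing of the jump when the orbit returns to $c$ repeatedly. Possible cancellation among repeated visits is excluded by the arithmetic-progression structure of $I_c$.
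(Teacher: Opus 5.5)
Your proof is correct. Part (i) is the paper's argument: truncate the series where the tail $\sum_{n>N}\beta^{-n}$ is small, and take $x$ below the minimum of the first $N$ orbit points.

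For (ii) the paper gives no argument of its own. It only says the proof is similar to the positive-$\beta$ case in Huang--Wang. There the weights $\beta^{-n}$ are all positive, the density is monotone, and every orbit point in $(a,b)$ automatically forces a strict drop. That analogy skips the one place where the negative case differs: the weights $(-\beta)^{-n}$ alternate in sign, so repeated visits of the orbit to the same point $c$ could in principle cancel. Your argument supplies exactly this step. Dominated convergence gives $h_{-\beta}(c)-h_{-\beta}(c^{+})=\sum_{n\in I_c}(-\beta)^{-n}$. Since $I_c$ is either $\{m\}$ or the progression $\{m+jp: j\ge 0\}$, the jump equals $(-\beta)^{-m}$ or $(-\beta)^{-m}/(1-(-\beta)^{-p})$, and both are nonzero. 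What your route buys is a self-contained proof, valid for every $\beta>1$ and not only non-integers. Including $n=0$ in $\mathcal{O}_{-\beta}$, unlike the paper's $n\ge 1$, is harmless because $1\notin(a,b)$.

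One addition strengthens your result in a useful way. The same dominated-convergence argument shows $h_{-\beta}$ is left-continuous, so $h_{-\beta}(c^{-})=h_{-\beta}(c)$. Your jump then says the two one-sided limits at $c$ differ. Hence $h_{-\beta}$ is not even almost everywhere constant on $(a,b)$. That is the form in which (ii) is actually used in the proof of Proposition \ref{prop:finite}, where one only knows the two densities agree almost everywhere.
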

\begin{proof}
The proof of (ii) is similar with the proof of Proposition 2.2 (iii) in \cite{Huang-Wang-2025}. We drop it here.

  For $x \in (0,1]$, define $N_x := \big\{ n \ge 0: T_{-\beta}^n (1) \geq  x \big\}$.
  Note that $T_{-\beta}^n(x)>0$ for all $x\in(0,1]$, $n\geq 0$ and $\sum_{n=0}^\infty \frac{1}{(-\beta)^n}=\frac{\beta}{\beta+1}$. From \eqref{eq:nonor-den}, we have $${h}_{-\beta}(x) = \sum_{n \in N_x} \frac{1}{(-\beta)^n},\; x\in (0,1].$$

 For any $\varepsilon >0$, there exists $\kappa=\kappa(\varepsilon) \in \N$ such that $$\sum_{n=\kappa+1}^{\f} \left|\frac{1}{(-\beta)^n}\right| < \varepsilon.$$
  Set $\delta = \min\big\{ T_{-\beta}^n (1): n=1,2,\ldots, \kappa \big\}$.  For any $x<\delta$, we have $N_x \cap \{1,2,\ldots, \kappa\} = \{1,2,\ldots, \kappa\}$, and hence,
  $$\frac{\beta}{\beta+1}-\varepsilon =\frac{\beta}{\beta+1}-\sum_{n=\kappa+1}^{\f} \frac{1}{\beta^n} \leq    {h}_{-\beta}(x) =  \sum_{n \in N_x} \frac{1}{\beta^n} \le \frac{\beta}{\beta+1} + \sum_{n=\kappa+1}^{\f} \frac{1}{\beta^n} = \frac{\beta}{\beta+1}+ \varepsilon.$$
  Thus we get $$ \lim_{x \to 0^{+}} {h}_{\beta}(x) =\frac{\beta}{\beta+1}.$$
\end{proof}
The following lemma is an easy exercise in real analysis, which will be used in our subsequent proof.
The proof is left to the reader.

\begin{lemma}\label{lemma:limit}
  Let $f,g: (0,1) \to \R$ be two functions satisfying $f(x) = g(x)$ for Lebesgue almost everywhere $x\in(0,1)$.
  If the left limits $\displaystyle\lim_{x \to x_0^{-}} f(x)$ and $\displaystyle\lim_{x \to x_0^{-}} g(x)$ exist for some $0< x_0 \le 1$, then we have $$\lim_{x \to x_0^{-}} f(x) = \lim_{x \to x_0^{-}} g(x).$$
  Similarly, if the right limits $\displaystyle\lim_{x \to x_0^{+}} f(x)$ and $\displaystyle\lim_{x \to x_0^{+}} g(x)$ exist for some $0 \le x_0 < 1$, then $$\lim_{x \to x_0^{+}} f(x) = \lim_{x \to x_0^{+}} g(x).$$
\end{lemma}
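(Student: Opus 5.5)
The plan is to argue by contradiction, using Lebesgue's differentiation theorem (or Lusin-type density points). Take $f,g$ with $f=g$ off a null set $N\subset(0,1)$. Suppose $x_0\in(0,1]$ is such that the left limits $L_f:=\lim_{x\to x_0^-}f(x)$ and $L_g:=\lim_{x\to x_0^-}g(x)$ both exist, and assume $L_f\neq L_g$.

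First, set $\varepsilon:=|L_f-L_g|/3>0$. By the definition of the one-sided limits, there is $\delta>0$ with $(x_0-\delta,x_0)\subset(0,1)$ such that for all $x\in(x_0-\delta,x_0)$,
\[
|f(x)-L_f|<\varepsilon \quad\text{and}\quad |g(x)-L_g|<\varepsilon .
\]
By the triangle inequality, every such $x$ then satisfies $|f(x)-g(x)|>\varepsilon>0$, so $f(x)\neq g(x)$.

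Next, this says the whole interval $(x_0-\delta,x_0)$ lies in the exceptional set $\{x: f(x)\neq g(x)\}\subset N$. That interval has positive Lebesgue measure $\delta$, which contradicts $\lambda(N)=0$. Hence $L_f=L_g$.

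The right-limit statement is proved identically, using intervals $(x_0,x_0+\delta)\subset(0,1)$ for $0\le x_0<1$. The only point requiring care is choosing $\delta$ small enough that the interval stays inside the domain $(0,1)$, which is possible since $x_0\le 1$ (respectively $x_0\ge 0$). I do not expect a genuine obstacle here: the whole argument is the observation that two distinct one-sided limits force disagreement on a full one-sided neighbourhood, and such a neighbourhood cannot be null. In the later application, one takes $f=h_{-\beta_1}$ and $g$ a suitable constant multiple of $h_{-\beta_2}$, which agree almost everywhere when the invariant measures coincide. Proposition \ref{prop:property}(i) together with one-sided limits at orbit points of $1$ then lets one compare these limits exactly.
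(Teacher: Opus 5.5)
Your argument is correct, and it is the elementary argument the paper has in mind; the paper leaves this proof to the reader. Distinct one-sided limits force $f\neq g$ on a whole interval $(x_0-\delta,x_0)$ (respectively $(x_0,x_0+\delta)$), and such an interval cannot lie inside a null set. Your opening mention of Lebesgue's differentiation theorem or density points is never used and can be dropped; you only need the fact that a nondegenerate interval has positive Lebesgue measure.
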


Suppose that $\beta_1, \beta_2 > 1$ are two non-integers with $\nu_{\beta_1} = \nu_{\beta_2}$.
Then the density functions $\widetilde{h}_{-\beta_1}(x) = \widetilde{h}_{-\beta_2}(x)$ for Lebesgue almost everywhere $x \in (0,1]$.
\begin{lemma}\label{le:equal}
 If $\beta_1,\beta_2>1$ are two non-integers with $\nu_{-\beta_1}=\nu_{-\beta_2}$, then the normal constant $K_{-\beta_1}, K_{-\beta_2}$ must satisfy the following formula,
  $$\frac{K_{-\beta_1}}{K_{-\beta_2}}=\frac{\beta_1(\beta_2+1)}{\beta_2(\beta_1+1)}.$$
\end{lemma}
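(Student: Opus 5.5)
The plan is to compare the two normalized densities near $0$. Since $\nu_{-\beta_1}=\nu_{-\beta_2}$, both measures are absolutely continuous with respect to Lebesgue measure, and their Radon--Nikodym derivatives agree almost everywhere. That is,
$$\widetilde{h}_{-\beta_1}(x)=\frac{1}{K_{-\beta_1}}h_{-\beta_1}(x)=\frac{1}{K_{-\beta_2}}h_{-\beta_2}(x)=\widetilde{h}_{-\beta_2}(x)\quad\text{for Lebesgue a.e. } x\in(0,1].$$
Set $f:=\widetilde{h}_{-\beta_1}$ and $g:=\widetilde{h}_{-\beta_2}$ on $(0,1)$. Then $f=g$ almost everywhere, which is exactly the hypothesis of Lemma \ref{lemma:limit}.

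Next, invoke Proposition \ref{prop:property}(i) for each $\beta_i$. This gives
$$\lim_{x\to0^+}h_{-\beta_i}(x)=\frac{\beta_i}{\beta_i+1},\qquad i=1,2.$$
Hence the right limits of $f$ and $g$ at $x_0=0$ exist and equal $\frac{\beta_1}{K_{-\beta_1}(\beta_1+1)}$ and $\frac{\beta_2}{K_{-\beta_2}(\beta_2+1)}$ respectively. The right-limit part of Lemma \ref{lemma:limit} with $x_0=0$ then yields
$$\frac{\beta_1}{K_{-\beta_1}(\beta_1+1)}=\frac{\beta_2}{K_{-\beta_2}(\beta_2+1)}.$$
Rearranging gives the claimed ratio $K_{-\beta_1}/K_{-\beta_2}=\beta_1(\beta_2+1)/\big(\beta_2(\beta_1+1)\big)$.

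The only delicate point is to make sure $K_{-\beta_i}\neq 0$, so that the normalization and the final division are legitimate. This holds because $K_{-\beta_i}=\int_{(0,1]}h_{-\beta_i}\,d\lambda$ is the total mass of the (positive) absolutely continuous invariant measure before normalization. Since $\widetilde h_{-\beta_i}$ is a probability density, the integral is nonzero, and we may take it to be positive. Apart from this check, the argument is a direct combination of the two cited results, and I expect no substantive obstacle.
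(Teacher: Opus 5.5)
Your proposal is correct and is essentially the paper's own proof. You take right limits at $0$ of the normalized densities via Proposition~\ref{prop:property}(i), equate them using Lemma~\ref{lemma:limit} (since the densities agree a.e.), and rearrange. Your remark that $K_{-\beta_i}\neq 0$ is a small check the paper leaves implicit; it does not change the argument.
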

\begin{proof}
It follows from Proposition \ref{prop:property} (i) that $$\lim_{x \to 0^+} \widetilde{h}_{-\beta_i}(x) = \frac{1}{K_{-\beta_i}} \cdot  \lim_{x \to 0^+} {h}_{-\beta_i}(x)= \frac{\beta_i}{K_{-\beta_i}(\beta_i+1)} \quad\text{for } i =1,2. $$
As $\nu_{-\beta_1}=\nu_{-\beta_2}$, the density functions $\widetilde{h}_{-\beta_1}(x) = \widetilde{h}_{-\beta_2}(x)$ for Lebeague almost everywhere $x\in (0,1]$.
By Lemma \ref{lemma:limit}, we obtain $$ \frac{\beta_1}{K_{-\beta_1}(\beta_1+1)} = \lim_{x \to 0^+} \widetilde{h}_{-\beta_1}(x) = \lim_{x \to 0^+} \widetilde{h}_{-\beta_2}(x) = \frac{\beta_2}{K_{-\beta_2}(\beta_2+1)}. $$
That is, $$\frac{K_{-\beta_1}}{K_{-\beta_2}}=\frac{\beta_1(\beta_2+1)}{\beta_2(\beta_1+1)}.$$
\end{proof}

Next, we present a complete characterization of the left limit of $h_{-\beta}(x)$ at point $1$ by analysis the orbit of $1$ under $T_{-\beta}$.
\begin{proposition}\label{prop:lim-1}
For a non-integer $\beta>1$, if there exists a $k\geq 1$ with $T_{-\beta}^k(1)=1$, denote by $m:=\min\{n\geq 1:T_{-\beta}^n(1)=1\}$. Then $\lim_{x\to 1^-}h_{-\beta}(x)=\frac{\beta^m}{\beta^m-(-1)^m}$.  Otherwise, if $T_{-\beta}^n(1)<1$ for all $n\geq 1$, then we have that $\lim_{x\to 1^-}h_{-\beta}(x)=1$.
\end{proposition}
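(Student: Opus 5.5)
We compute the left limit directly from the representation
\[
h_{-\beta}(x)=\sum_{n\in N_x}\frac{1}{(-\beta)^n},\qquad N_x=\{n\ge 0: T_{-\beta}^n(1)\ge x\},
\]
used in the proof of Proposition \ref{prop:property}. As $x\to 1^-$, the index $n$ stays in $N_x$ in the limit exactly when $T_{-\beta}^n(1)=1$. Indeed, $T_{-\beta}$ maps into $(0,1]$, so every orbit point satisfies $T_{-\beta}^n(1)\le 1$. If $T_{-\beta}^n(1)=1$, then $n\in N_x$ for every $x\le 1$. If $T_{-\beta}^n(1)<1$, then $n\notin N_x$ as soon as $x>T_{-\beta}^n(1)$.

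Put $E:=\{n\ge 0: T_{-\beta}^n(1)=1\}$. The claim is that
\[
\lim_{x\to1^-}h_{-\beta}(x)=\sum_{n\in E}\frac{1}{(-\beta)^n}.
\]
We prove this by the same truncation argument as in Proposition \ref{prop:property}(i).
\begin{itemize}
\item Given $\varepsilon>0$, choose $\kappa$ with $\sum_{n>\kappa}\beta^{-n}<\varepsilon$.
\item Let $\delta$ be the maximum of $T_{-\beta}^n(1)$ over those $n\le\kappa$ with $T_{-\beta}^n(1)<1$. This is a finite maximum of numbers below $1$, so $\delta<1$; if there are no such $n$, set $\delta=0$.
\item For $x\in(\delta,1)$ we then have $N_x\cap\{0,\dots,\kappa\}=E\cap\{0,\dots,\kappa\}$.
\item Hence $|h_{-\beta}(x)-\sum_{n\in E}(-\beta)^{-n}|\le 2\varepsilon$.
\end{itemize}
The sum over $E$ converges absolutely, so this gives the claim.

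It remains to identify $E$ in the two cases.
\begin{itemize}
\item If $T_{-\beta}^n(1)<1$ for all $n\ge1$, then $E=\{0\}$ and the limit is $1$.
\item Otherwise, let $m$ be the minimal return time. Since the orbit is deterministic, $T_{-\beta}^n(1)=1$ holds if and only if $m\mid n$. One direction follows by iterating. For the other, write $n=jm+r$ with $0<r<m$; then $T_{-\beta}^n(1)=T_{-\beta}^r(1)\ne1$ by minimality of $m$. So $E=m\N_0$, and the geometric series gives
\[
\sum_{j\ge0}\bigl((-\beta)^{-m}\bigr)^j=\frac{1}{1-(-1)^m\beta^{-m}}=\frac{\beta^m}{\beta^m-(-1)^m}.
\]
\end{itemize}

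None of these steps is a real obstacle. The only point needing care is the uniformity: the tail bound must make the infinitely many orbit points that accumulate near $1$ (without equalling it) irrelevant. The truncation at $\kappa$ handles this, exactly as in Proposition \ref{prop:property}(i). Non-integrality of $\beta$ is not needed for this computation.
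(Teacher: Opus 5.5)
Your argument is correct and is essentially the paper's: the aperiodic case is handled there by the same truncation argument as in Proposition~\ref{prop:property}(i). In the periodic case the paper uses finiteness of the orbit directly, taking $\delta=\max\{T_{-\beta}(1),\dots,T_{-\beta}^{m-1}(1)\}$, so that $h_{-\beta}$ is exactly constant on $(\delta,1]$ and equal to $\sum_{j\ge 0}(-\beta)^{-jm}$; this is slightly stronger and is used later in the proof of Theorem~\ref{thm:measure}, whereas you absorb this case into the uniform truncation via $E=\{0,m,2m,\dots\}$. Your geometric series with ratio $(-\beta)^{-m}$ and your remark that non-integrality is unnecessary (integer $\beta$ gives $m=1$ and the value $\beta/(\beta+1)$) are both correct.
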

\begin{proof}
Note that $T_{-\beta}^0(1)=1$.
Suppose  exist an integer $k\geq 1$ such that $T_{-\beta}^{k}(1)=1$. Then $m\geq 2$ since $\beta$ is non-integer. Take $\delta=\max\{T_{-\beta}^1(1),\ldots, T_{-\beta}^{m-1}(1)\}$. Then for any $x\in(\delta,1]$, we have
$$h_{-\beta}(x)=1+\frac{1}{\beta^m}+\frac{1}{\beta^{2m}}+\ldots=\frac{\beta^m}{\beta^m-(-1)^m}.$$
Otherwise, assume $T_{-\beta}^n(1)<1$ for all $n\geq 1$. By the similar proof of Proposition \ref{prop:property} \rm{(i)}, we can prove that $\lim_{x\to 1^-}h_{-\beta}(x)=1$.
\end{proof}

Write $\mathcal{O}_{-\beta}$ for the orbit of $1$ under $(-\beta)$-transformation, i.e.,
\begin{equation*}\label{eq:def-orb-1}
\mathcal{O}_{-\beta}:=\big\{ T_{-\beta}^n (1): n \geq 1 \big\}.
\end{equation*}
By a simple observation, there are three cases for the orbit of $1$ under $T_{-\beta}$.
 \begin{enumerate}
   \item $\{ T_{-\beta}^n (1): n \geq 1 \}$ is periodic, i.e. there exists an integer $m\geq 2$ such that $T_{-\beta}^{m}(1)=1$.
   \item $\{ T_{-\beta}^n (1): n \geq 1 \}$ is eventually periodic (not periodic), i.e. there exist integers $m\geq 2,k\geq  1$ such that $T_{-\beta}^{m+k}(1)=T_{-\beta}^{m}(1)\neq 1$.
   \item $\{ T_{-\beta}^n (1): n \geq 1 \}$ is not eventually periodic, i.e. $\#\mathcal{O}_{-\beta}=\infty$ and $T_{-\beta}^n (1)<1$ for all $n\geq 1$.
 \end{enumerate}

The orbit of $1$ under negative $\beta$-transformation will be addressed in the following two propositions.
\begin{proposition}\label{prop:finite}
Let $\beta_1,\beta_2 > 1$ be two different non-integers.
If $\nu_{-\beta_1}=\nu_{-\beta_2}$, then the sets $\mathcal{O}_{-\beta_1}$ and $\mathcal{O}_{-\beta_2}$ are finite, and $\mathcal{O}_{-\beta_1} \setminus \{1\} = \mathcal{O}_{-\beta_2} \setminus \{1\}$.
\end{proposition}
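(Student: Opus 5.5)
Since $\nu_{-\beta_1}=\nu_{-\beta_2}$, the normalized densities agree almost everywhere:
\[
\frac{1}{K_{-\beta_1}}h_{-\beta_1}=\frac{1}{K_{-\beta_2}}h_{-\beta_2}
\quad\text{Lebesgue-a.e. on } (0,1].
\]
The plan is to compare the discontinuity sets of the two step-type functions, using Lemma \ref{lemma:limit} and Proposition \ref{prop:property} (ii).

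The first step is to show that the points of $\mathcal{O}_{-\beta}\setminus\{1\}$ are exactly the points in $(0,1)$ where $h_{-\beta}$ has a genuine jump. From \eqref{eq:nonor-den}, $h_{-\beta}$ is left-continuous, and at a point $y\in(0,1)$ it has left and right limits. These limits differ by
\[
\sum_{n\ge0:\;T_{-\beta}^n(1)=y}(-\beta)^{-n}.
\]
If the first hit of $y$ is at time $n_0$, this sum is either $(-\beta)^{-n_0}$ (a single hit) or $(-\beta)^{-n_0}\big(1-(-\beta)^{-p}\big)^{-1}$ (if $y$ is periodic with period $p$). Both are nonzero, so every $y\in\mathcal{O}_{-\beta}\cap(0,1)$ is a jump point. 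Conversely, Proposition \ref{prop:property} (ii) shows that $h_{-\beta}$ is locally constant off $\overline{\mathcal{O}_{-\beta}}$. By Lemma \ref{lemma:limit}, applied to the one-sided limits at each $y$, the two normalized densities have the same jump set in $(0,1)$. Hence $\mathcal{O}_{-\beta_1}\setminus\{1\}=\mathcal{O}_{-\beta_2}\setminus\{1\}$.

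The jump sizes must also match. Write $n_i(y)$ for the first hitting time of $y$ under $T_{-\beta_i}$. For every $y$ in the common set, matching the jumps gives an identity of the form
\[
\frac{(-\beta_1)^{-n_1(y)}c_1(y)}{K_{-\beta_1}}=\frac{(-\beta_2)^{-n_2(y)}c_2(y)}{K_{-\beta_2}},
\]
where $c_i(y)\in\big\{1,\,(1-(-\beta_i)^{-p})^{-1}\big\}$. By Lemma \ref{le:equal}, the ratio $K_{-\beta_1}/K_{-\beta_2}$ is a fixed constant $C=\frac{\beta_1(\beta_2+1)}{\beta_2(\beta_1+1)}$.

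The main step is finiteness, which I would argue by contradiction. Suppose the common orbit set $\mathcal{O}$ were infinite. Then both orbits fall into case (3), so all hitting times are distinct and $c_i\equiv1$. In that case the jump at $T_{-\beta_1}^n(1)$ has absolute size $\beta_1^{-n}/K_{-\beta_1}$. The points of $\mathcal{O}$ are thus enumerated by $n$ via $\beta_1$ and by $m(n)$ via $\beta_2$, and matching jumps gives
\[
\beta_1^{-n}=C'\beta_2^{-m(n)}
\]
for a bijection $n\mapsto m(n)$ of $\mathbb{N}$ and a constant $C'>0$. Since jump sizes strictly decrease along each enumeration, the bijection must preserve order, forcing $m(n)=n$ (after accounting for $n\ge1$). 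Then $(\beta_2/\beta_1)^n=C'$ for all $n$, so $\beta_1=\beta_2$, a contradiction.

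I expect this to be the main obstacle, and the point needing care is that the ordering-by-jump-size argument must handle possible coincidences of absolute values. Using the signs $(-1)^n$ gives extra rigidity here. Some care is also needed at the point $1$ itself, where the jump is not visible inside $(0,1)$; that is exactly why the statement excludes $1$.
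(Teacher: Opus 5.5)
Your proof is correct, but it runs in the opposite order from the paper's and uses a different mechanism for finiteness.

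\textbf{The paper's route.} It proves finiteness first, at the boundary. If both orbits were infinite, then $1\notin\mathcal{O}_{-\beta_1}\cup\mathcal{O}_{-\beta_2}$, so $\lim_{x\to1^-}h_{-\beta_i}(x)=1$ (Proposition \ref{prop:lim-1}). Hence $K_{-\beta_1}=K_{-\beta_2}$, and Lemma \ref{le:equal} then forces $\beta_1=\beta_2$. With one orbit known to be finite, the paper partitions $(0,1)$ by its points. It then uses Proposition \ref{prop:property} (ii) with Lemma \ref{lemma:limit} to get $\mathcal{O}_{-\beta_2}\setminus\{1\}\subset\mathcal{O}_{-\beta_1}\setminus\{1\}$, and the reverse inclusion by symmetry.

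\textbf{Your route.} You work entirely in the interior of $(0,1)$.
\begin{itemize}
\item The explicit jump $\sum_{T^n_{-\beta}(1)=y}(-\beta)^{-n}\neq0$ identifies the jump set with $\mathcal{O}_{-\beta}\cap(0,1)$. This gives the set equality immediately, with no finiteness hypothesis.
\item Finiteness then comes from matching jump sizes along the induced bijection of hitting times. That step is rigorous as written.
\item The coincidence of absolute values you worry about cannot occur. Since $n\mapsto\beta_i^{-n}$ is strictly decreasing, the bijection is strictly increasing, hence the identity. Then $(\beta_2/\beta_1)^n=C'$ for $n=1,2$ gives $\beta_1=\beta_2$.
\item You never actually use the value of $C$ from Lemma \ref{le:equal}.
\end{itemize}

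\textbf{What each buys.} The paper's argument is shorter because it reuses the boundary limits at $0^+$ and $1^-$, which it needs later anyway. Yours avoids those limits altogether. It also makes explicit the nonvanishing of jumps, which the omitted proof of Proposition \ref{prop:property} (ii) relies on. Finally, it yields the pointwise identity $K_{-\beta_1}^{-1}(-\beta_1)^{-n_1(y)}c_1(y)=K_{-\beta_2}^{-1}(-\beta_2)^{-n_2(y)}c_2(y)$. This is the kind of comparison that Proposition \ref{prop:0} and the proof of Theorem \ref{thm:measure} carry out interval by interval.
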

\begin{proof}
  Suppose first that both $\mathcal{O}_{-\beta_1}$ and $\mathcal{O}_{-\beta_2}$ are infinite which implies that $1\notin\mathcal{O}_{-\beta_1}\cup\mathcal{O}_{-\beta_2}$.
It follows from Proposition \ref{prop:property} (i) that $$\lim_{x \to 1^-} \widetilde{h}_{-\beta_i}(x) = \frac{1}{K_{-\beta_i}} \cdot  \lim_{x \to 1^-} {h}_{-\beta_i}(x)= \frac{1}{K_{-\beta_i}} \quad\text{for } i =1,2. $$
By Lemma \ref{lemma:limit}, we obtain $$ \frac{1}{K_{-\beta_1}} = \lim_{x \to 1^-} \widetilde{h}_{-\beta_1}(x) = \lim_{x \to 1^-} \widetilde{h}_{-\beta_2}(x) = \frac{1}{K_{-\beta_2}}. $$
That is, $K_{-\beta_1}=K_{-\beta_2}$. In Lemma \ref{le:equal} we also proved that
$$\frac{\beta_1(\beta_2+1)}{\beta_2(\beta_1+1)}
=\frac{K_{-\beta_1}}{K_{-\beta_2}}=1.$$
This yields that $\beta_1 = \beta_2$, a contradiction.
Thus we have derived that at least one of sets $\mathcal{O}_{-\beta_1}$ and $\mathcal{O}_{-\beta_2}$ is finite.

  Without loss of generality, we may assume that $\mathcal{O}_{-\beta_1}$ is finite.
  Then write $\mathcal{O}_{-\beta_1} \cup \{0,1\} = \{ x_0, x_1, \ldots, x_{\ell+1} \}$, where $0=x_0< x_1  < \cdots < x_\ell < x_{\ell+1}=1$.
  For each $0 \le k \le \ell$, since $(x_{k}, x_{k+1}) \cap \mathcal{O}_{-\beta_1} = \emptyset$, it follows from Proposition \ref{prop:property} (ii) that  ${h}_{-\beta_1}(x)$ is constant on $(x_{k}, x_{k+1})$.
  According to Lemma \ref{lemma:limit}, ${h}_{-\beta_2}(x)$ is also constant on $(x_{k}, x_{k+1})$.
  Again by Proposition \ref{prop:property} (ii), we have that $\mathcal{O}_{-\beta_2} \cap (x_{k}, x_{k+1}) = \emptyset$ for each $0\le k \le \ell$.
  Thus we conclude that $\mathcal{O}_{-\beta_2} \setminus \{1\} \subset \mathcal{O}_{-\beta_1} \setminus \{1\}$.
  In particular, $\mathcal{O}_{-\beta_2}$ is a finite set.
  Using the same argument, it's easy to show the inverse conclusion that $\mathcal{O}_{-\beta_1} \setminus \{1\} \subset \mathcal{O}_{-\beta_2} \setminus \{1\}$.
  Therefore we arrive at the conclusion that $\mathcal{O}_{-\beta_1}, \mathcal{O}_{-\beta_2}$ are finite sets and $\mathcal{O}_{-\beta_1} \setminus \{1\} = \mathcal{O}_{-\beta_2} \setminus \{1\}$.
\end{proof}

\begin{proposition}\label{prop:0}
  Let $\beta_1,\beta_2 > 1$ be two different non-integers. If $\nu_{-\beta_1}=\nu_{-\beta_2}$, then the number $1$ is exactly in one of sets $\mathcal{O}_{-\beta_1}$ and $\mathcal{O}_{-\beta_2}$.
\end{proposition}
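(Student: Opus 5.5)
We must show: if $\nu_{-\beta_1}=\nu_{-\beta_2}$ then $1$ lies in exactly one of $\mathcal{O}_{-\beta_1}$, $\mathcal{O}_{-\beta_2}$. By Proposition \ref{prop:finite} both orbits are finite, so each case is either periodic ($1\in\mathcal{O}$) or eventually periodic with $T^n_{-\beta_i}(1)<1$ for all $n\ge1$. There are two cases to exclude.

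\emph{Case A: $1\notin\mathcal{O}_{-\beta_1}\cup\mathcal{O}_{-\beta_2}$.} Here I would copy the first paragraph of the proof of Proposition \ref{prop:finite}, which only used $T^n_{-\beta_i}(1)<1$ for all $n\ge 1$. By Proposition \ref{prop:lim-1}, $\lim_{x\to1^-}h_{-\beta_i}(x)=1$ for $i=1,2$. Lemma \ref{lemma:limit} applied to $\widetilde h_{-\beta_1}=\widetilde h_{-\beta_2}$ a.e. then gives $K_{-\beta_1}=K_{-\beta_2}$. Combined with Lemma \ref{le:equal}, this yields $\beta_1(\beta_2+1)=\beta_2(\beta_1+1)$, hence $\beta_1=\beta_2$, a contradiction.

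\emph{Case B: $1\in\mathcal{O}_{-\beta_1}\cap\mathcal{O}_{-\beta_2}$.} Let $m_i$ be the minimal periods, which satisfy $m_i\ge2$. By Proposition \ref{prop:lim-1} and Lemma \ref{lemma:limit}, the left limits at $1$ give
$$\frac{1}{K_{-\beta_1}}\frac{\beta_1^{m_1}}{\beta_1^{m_1}-(-1)^{m_1}}=\frac{1}{K_{-\beta_2}}\frac{\beta_2^{m_2}}{\beta_2^{m_2}-(-1)^{m_2}}.$$
Substituting Lemma \ref{le:equal}, and writing $r_i:=\beta_i/(\beta_i+1)$ for the limit at $0^+$, this becomes
$$\frac{\beta_1^{m_1}}{r_1(\beta_1^{m_1}-(-1)^{m_1})}=\frac{\beta_2^{m_2}}{r_2(\beta_2^{m_2}-(-1)^{m_2})}.$$
This single identity is not obviously contradictory, so I would bring in more structure. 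By Proposition \ref{prop:finite}, the points $\{T^n_{-\beta_1}(1)\}_{n=1}^{m_1-1}$ and $\{T^n_{-\beta_2}(1)\}_{n=1}^{m_2-1}$ form the same set $\{x_1<\dots<x_\ell\}$. Both orbits are periodic, so $\ell=m_1-1=m_2-1$ and $m_1=m_2=:m$.

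With a common period, the identity reduces to a comparison of the functions
$$\beta\mapsto \frac{\beta+1}{\beta}\cdot\frac{\beta^m}{\beta^m-(-1)^m}.$$
For $m$ even this is $\frac{(\beta+1)\beta^{m-1}}{\beta^m-1}$. I would check monotonicity on $(1,\infty)$ by examining the derivative of its logarithm, $\frac{1}{\beta+1}+\frac{m-1}{\beta}-\frac{m\beta^{m-1}}{\beta^m-1}$, and show it has constant sign. The $m$ odd case is handled the same way. Strict monotonicity then forces $\beta_1=\beta_2$.

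If monotonicity fails for some $m$, I would use a second, independent relation. The density values on each interval $(x_k,x_{k+1})$ must also match after normalization. In particular, on $(0,x_1)$ the value is the full sum $\frac{\beta}{\beta+1}$, and on $(x_\ell,1)$ it is $\frac{\beta^m}{\beta^m-(-1)^m}$. The intermediate intervals give sums over index sets determined by the common ordering of the orbit. Comparing, for example, the interval just left of $1$ with the next one should yield two equations in $\beta_1,\beta_2$ forcing equality.

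\textbf{Main obstacle.} The main obstacle is Case B: establishing $m_1=m_2$ rigorously and then the monotonicity (or injectivity) argument for the resulting function of $\beta$.
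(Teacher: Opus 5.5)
Your Case A, the deduction $m_1=m_2$ (the points $T^n_{-\beta_i}(1)$, $1\le n\le m_i-1$, are distinct by minimality of $m_i$), and the even-$m$ part of Case B are fine and match the paper. For even $m$, your function $\frac{\beta+1}{\beta}\cdot\frac{\beta^m}{\beta^m-1}$ is a product of two positive decreasing functions.

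\textbf{The gap is odd $m$.} Your claim that this case ``is handled the same way'' is false. For odd $m\ge 3$, consider $F(\beta)=\frac{\beta+1}{\beta}\cdot\frac{\beta^m}{\beta^m+1}=\frac{\beta^{m-1}(\beta+1)}{\beta^m+1}$. It equals $1$ at $\beta=1$, tends to $1$ as $\beta\to\infty$, and is $>1$ in between, since numerator minus denominator is $\beta^{m-1}-1$. So it is not injective on $(1,\infty)$. For $m=3$ it is $\beta^2/(\beta^2-\beta+1)$, which takes the value $64/49$ at both $8/5$ and $8/3$. Hence the relation coming from the limits at $0^+$ and $1^-$ can never give the contradiction on its own. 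This is exactly why the paper, for odd $m$, goes on to compare the densities on all intervals between consecutive orbit points.

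Your fallback points in that direction but is not carried out, and the version you describe is not enough. The interval $(x_{\ell-1},x_\ell)$ only adds $(-\beta_1)^{-a}=(-\beta_2)^{-b}$, where $T^a_{-\beta_1}(1)=T^b_{-\beta_2}(1)=x_\ell$. Together with $F(\beta_1)=F(\beta_2)$ this does not force $\beta_1=\beta_2$. For instance, take $m=5$, $a=3$, $b=1$: the plastic number $\beta_1$ (so $\beta_1^3=\beta_1+1$) and $\beta_2=\beta_1^3$ satisfy both equations.

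\textbf{How to close it.} The missing idea is to use the jump at \emph{every} orbit point, in particular at the point of index $1$.

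Let $n^i_k\in\{1,\dots,m-1\}$ be the index with $T^{n^i_k}_{-\beta_i}(1)=x_k$, and let $L_i=\frac{\beta_i^m}{\beta_i^m-(-1)^m}$. Moving $x$ across $x_k$ removes from the sum defining $h_{-\beta_i}$ exactly the terms with $n\equiv n^i_k \pmod m$. So $h_{-\beta_i}$ drops by $L_i(-\beta_i)^{-n^i_k}$ at $x_k$, and it equals $L_i$ on $(x_\ell,1)$.

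The normalized densities are step functions with the same breakpoints that agree a.e. Hence they agree on each open interval and have equal jumps: $\frac{L_1}{K_{-\beta_1}}(-\beta_1)^{-n^1_k}=\frac{L_2}{K_{-\beta_2}}(-\beta_2)^{-n^2_k}$. Dividing by the common value $\frac{L_1}{K_{-\beta_1}}=\frac{L_2}{K_{-\beta_2}}$ on $(x_\ell,1)$ gives $(-\beta_1)^{-n^1_k}=(-\beta_2)^{-n^2_k}$ for every $k$.

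Now suppose $\beta_1<\beta_2$ and take $k$ with $n^1_k=1$. This yields $\beta_1=\beta_2^{n^2_k}\ge\beta_2$, a contradiction.

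This argument handles both parities at once and needs neither Lemma \ref{le:equal} nor any monotonicity. It is what the paper's ``continue this process'' step amounts to once its interval values are written out correctly as cumulative sums.
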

\begin{proof}
  The proof is conducted by contradiction. Suppose first that $1 \not\in \mathcal{O}_{-\beta_1} \cup \mathcal{O}_{-\beta_2}$. Then $T_{-\beta_1}^n (1)<1$ and $T_{-\beta_2}^n (1)<1$ for all $n \ge 1$.
  From the proof of Proposition \ref{prop:finite}, we conclude that $\beta_1 = \beta_2$, a contradiction.

  Next, suppose that $1 \in \mathcal{O}_{-\beta_1} \cap \mathcal{O}_{-\beta_2}$.
  Let $n_i = \min\{ n \ge 1: T_{-\beta_i}^{n}(1) =1 \}$ for $i =1,2$.
  Then we have $$ \mathcal{O}_{-\beta_i} = \big\{ T_{-\beta_i}(1), T_{-\beta_i}^2 (1), \ldots, T_{-\beta_i}^{n_i}(1) \big\}\quad \text{and}\quad
  \lim_{x\to 1^-}{h}_{-\beta_i}(x) = \sum_{n=0}^{n_i} \frac{1}{(-\beta_i)^n}=\frac{\beta_i^{n_i}}{\beta_i^{n_i}-(-1)^{n_i}}.$$
  From Proposition \ref{prop:finite}, we have $\mathcal{O}_{-\beta_1}  = \mathcal{O}_{-\beta_2}$, which yields that $n_1 = n_2=m\geq 2$.
Then we have
$$\frac{K_{-\beta_1}}{K_{-\beta_2}}=\frac{\beta_1(\beta_2+1)}{\beta_2(\beta_1+1)}
=\frac{\beta_1^m(\beta_2^m-(-1)^m)}{\beta_2^m(\beta_1^m-(-1)^m)}.$$
Without lose of generation, we assume that $\beta_1<\beta_2$. For $m$ is even,
$$\frac{\beta_1(\beta_2+1)}{\beta_2(\beta_1+1)}<1,\quad \text{but}\quad \frac{\beta_1^m(\beta_2^m-1)}{\beta_2^m(\beta_1^m-1)}>1.$$
This is a contradiction.

For $m\geq2$ is odd, then we have that
$$\frac{K_{-\beta_1}}{K_{-\beta_2}}=\frac{\beta_1(\beta_2+1)}{\beta_2(\beta_1+1)}
=\frac{\beta_1^m(\beta_2^m+1)}{\beta_2^m(\beta_1^m+1)}.$$
Then we rewrite $\mathcal{O}_{-\beta_1}\cup \{0\}= \mathcal{O}_{-\beta_2} \cup \{0\} = \{ x_0, x_1, \ldots, x_{m} \}$, where $0=x_0< x_1  < \cdots < x_{m}=1$.
  For each $0 \le k \le m-1$, since  ${h}_{-\beta_1}(x)$ is constant on $(x_{k}, x_{k+1})$.
  Denote by $n^i_{k}=\min\{n:T_{-\beta_i}^n(1)=x_{k}\}$, for all $1\leq k\leq m-1$ and $n=1,2$.
  For $x\in(x_{m-1}, 1)$, we have that
  $$h_{-\beta_i}(x)=\left(1+\frac{1}{(-\beta_i)^{n^i_{m-1}}}\right)\frac{\beta_i^m}{\beta_i^m+1}.$$
  Then we can conclude that $$\frac{K_{-\beta_1}}{K_{-\beta_2}}=\frac{1+\frac{1}{(-\beta_1)^{n_{1_{m-1}}}}}{1+\frac{1}{(-\beta_2)^{n_{2_{m-1}}}}}=1.$$
Continue this process, we can conclude that
$$\frac{1+\frac{1}{(-\beta_1)^{n^1_1}}}{1+\frac{1}{(-\beta_2)^{n^2_1}}}=\ldots =
\frac{1+\frac{1}{(-\beta_1)^{n^1_{m-1}}}}{1+\frac{1}{(-\beta_2)^{n^2_{m-1}}}}=1.$$
Since $\beta_1<\beta_2$, there exists at least one $1\leq k\leq m-1$ with $$\frac{1+\frac{1}{(-\beta_1)^{n^1_k}}}{1+\frac{1}{(-\beta_2)^{n^2_k}}}<1.$$
This is contrary.

  Therefore we conclude that the number $1$ is exactly in one of sets $\mathcal{O}_{-\beta_1}$ and $\mathcal{O}_{-\beta_2}$.
\end{proof}

With the help of the preceding propositions, our main theorem can be proved.

\begin{proof}[Proof of Theorem \ref{thm:measure}]
The sufficiency follows from Proposition \ref{prop:suff}.
In the following, we focus on the necessity, and recall our hypothesis that $\beta_1, \beta_2 > 1$ are two different non-integers with $\nu_{-\beta_1} = \nu_{-\beta_2}$.

  From Proposition \ref{prop:0}, the number $1$ is exactly in one of sets $\mathcal{O}_{-\beta_1}$ and $\mathcal{O}_{-\beta_2}$.
  Without loss of generality, we assume that $1\in \mathcal{O}_{-\beta_1}$ and $1\notin \mathcal{O}_{-\beta_2}$.
  Let $m = \min\{ n \ge 1: T_{-\beta_1}^n(1) =1 \}$.
  $\mathcal{O}_{-\beta_1}\cup \{0\}= \mathcal{O}_{-\beta_2} \cup \{0\} = \{ x_0, x_1, \ldots, x_{m} \}$, where $0=x_0< x_1  < \cdots < x_{m}=1$. Note that $x_i\neq x_j$ for all $1 \le i < j \le m$.
  It follows from Proposition \ref{prop:0} that
  \begin{equation}\label{eq:beta-1}
    {h}_{-\beta_1}(x) = \sum_{n=0}^{\infty} \frac{1}{(-\beta_i)^m}=\frac{\beta_1^m}{\beta_1^m-(-1)^m}\quad \text{for any }x\in(x_{m-1},1).
  \end{equation}

  Note that $1\notin \mathcal{O}_{-\beta_2}$. By Proposition \ref{prop:finite},  we have that \begin{equation}\label{eq:beta-2}
    {h}_{-\beta_2}(x) = 1 \quad \text{for any }x\in(x_{m-1},1).
  \end{equation}
  Then we can deduce that
$$\frac{K_{-\beta_1}}{K_{-\beta_2}}=\frac{\beta_1(\beta_2+1)}{\beta_2(\beta_1+1)}
=\frac{\beta_1^m}{\beta_1^m-(-1)^m}.$$

We first consider  $m\geq 2$ is even. So $$\frac{K_{-\beta_1}}{K_{-\beta_2}}=\frac{\beta_1(\beta_2+1)}{\beta_2(\beta_1+1)}
=\frac{\beta_1^m}{\beta_1^m-1}>1,$$
which implies that $\beta_1>\beta_2$. For $m=2$, we have that $\beta_1=\beta_2+1$.
By a simple calculation, we have that $\beta_2$ is the root of equation $x^2-qx-p=0$, where $p,q\in\mathbb{N}$ with $p\leq q$.
For $m\geq4$, and $x\in(x_{m-2},x_{m-1})$, we have that
$$h_{-\beta_1}(x)=\left(1+\frac{1}{(-\beta_1)^{n^1_{m-1}}}\right)\frac{\beta_1^m}{\beta_1^m+1}$$
and
$$h_{-\beta_2}(x)=1+\frac{\beta_2^{m-1}}{\beta_2^{m-1}+1}.$$
 Then we can conclude that
 $$\frac{K_{-\beta_1}}{K_{-\beta_2}}=\frac{\beta_1(\beta_2+1)}{\beta_2(\beta_1+1)}
=\frac{\beta_1^m}{\beta_1^m+1}
=\frac{\left(1+\frac{1}{(-\beta_1)^{n^1_{m-1}}}\right)
\frac{\beta_1^m}{\beta_1^m+1}}{1+\frac{\beta_2^{m-1}}{\beta_2^{m-1}+1}}.$$
Then we have that
$$\frac{\left(1+\frac{1}{(-\beta_1)^{n^1_{m-1}}}\right)}{1+\frac{\beta_2^{m-1}}{\beta_2^{m-1}+1}}=1,$$
which can not happen for $\beta_1>\beta_2$..

For $m\geq 2$ is odd,
$$\frac{K_{-\beta_1}}{K_{-\beta_2}}=\frac{\beta_1(\beta_2+1)}{\beta_2(\beta_1+1)}
=\frac{\beta_1^m}{\beta_1^m+1}>1,$$
which implies that $\beta_1<\beta_2$.
By the same argument as $m$ is even, we can prove this can not happen.
\end{proof}

\section{Matching property and simple $(-\beta)$ numbers}
In this section, we will prove Theorem \ref{thm:matching} and Theorem \ref{thm:sft-matching}.
\subsection{Matching property}
Matching property has attracted attentions in iterated piecewise maps and is often related with Markov partitions, entropy and invariant measures.  We say negative  $\beta$-transformation $T_{-\beta}$ has matching, if there exist a finite integer $n$ such that $T_{-\beta}^{n}(0)=T_{-\beta}^{n}(1)$, and the smallest integer $n$ is referred to be the matching time.
Denote $$\mathcal{M}:=\{\beta\in(1,+\infty):T_{-\beta}\ {\rm has  \ matching \ property}\}.$$ First we state the orbit of $1$ under $T_{-\beta}$ if matching occurs.
\begin{lemma} \label{le:orbit-fixed}
For any $\beta\in(1,+\infty)$, $\beta\in \mathcal{M}$ if and only if the orbit $\{T_{-\beta}^n(1)\}_{n\geq 0}$ arrives at some fixed point.
\end{lemma}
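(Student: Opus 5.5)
The statement should follow directly from the convention $T_{-\beta}(0):=1$ introduced before Theorem \ref{thm:matching}. The plan is to rewrite the matching condition purely in terms of the orbit of $1$. Since $T_{-\beta}^n(0)$ is defined inductively by $T_{-\beta}^n(0)=T_{-\beta}\big(T_{-\beta}^{n-1}(0)\big)$ starting from $T_{-\beta}(0)=1$, I will first check by a trivial induction on $n\ge 1$ that
$$T_{-\beta}^n(0)=T_{-\beta}^{n-1}(1)\qquad\text{for all } n\ge 1,$$
with the usual convention $T_{-\beta}^0(1)=1$.

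With this identity, the matching condition $T_{-\beta}^n(0)=T_{-\beta}^n(1)$ for some $n\ge1$ reads
$$T_{-\beta}^{n-1}(1)=T_{-\beta}\big(T_{-\beta}^{n-1}(1)\big).$$
This says exactly that the point $y:=T_{-\beta}^{n-1}(1)\in(0,1]$ is a fixed point of $T_{-\beta}$.

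For the forward direction, if $\beta\in\mathcal{M}$ with matching time $n$, then the orbit $\{T_{-\beta}^k(1)\}_{k\ge0}$ reaches the fixed point $y$ at time $n-1$ and stays there afterwards. For the converse, suppose $T_{-\beta}^{j}(1)$ is a fixed point for some $j\ge 0$. Taking $n=j+1$ gives
$$T_{-\beta}^{n}(0)=T_{-\beta}^{j}(1)=T_{-\beta}^{j+1}(1)=T_{-\beta}^n(1),$$
so matching occurs.

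I would also record explicitly which points can be fixed. The equation $x=-\beta x+k$ gives $x=\frac{k}{\beta+1}$ with $k=\lfloor\beta x\rfloor+1\in\{1,\dots,\lfloor\beta\rfloor+1\}$. This is the form used later for generalized multinacci numbers. The case $j=0$ (that is, $n=1$) means $T_{-\beta}(1)=1$, which happens only for integer $\beta$. It is still consistent with the statement, so no case needs to be excluded.

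There is no real obstacle here. The only point requiring care is bookkeeping the index shift between the orbits of $0$ and $1$, together with the convention $T^0_{-\beta}(1)=1$. The identity $T_{-\beta}^n(0)=T_{-\beta}^{n-1}(1)$ carries the whole argument.
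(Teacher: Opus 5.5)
Your proposal is correct and is essentially the paper's argument: both proofs rest on the identity $T_{-\beta}^{n}(0)=T_{-\beta}^{n-1}(1)$, which comes from $T_{-\beta}(0)=1$. This identity turns $T_{-\beta}^{n}(0)=T_{-\beta}^{n}(1)$ into the statement that $T_{-\beta}^{n-1}(1)$ is a fixed point, and conversely a fixed point reached at time $j$ gives matching at time $j+1$. Your side remark that fixed points have the form $k/(\beta+1)$ is correct but not needed, and the paper's multinacci proof does not actually use it; it checks $T_{-\beta}^{m}(1)=T_{-\beta}^{m-1}(1)$ directly.
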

\begin{proof}
As usual, we say a point $x$ is a fixed point if $T_{-\beta}(x)=x$.
For any $\beta>1$, $T_{-\beta}(0)=1$. Then, $T_{-\beta}^{i-1}(1)=T_{-\beta}^{i}(0)$, for all $i\geq 1$.
By the definition of matching property, $\beta\in \mathcal{M}$ implies that there exists a finite integer $n$ such that $T_{-\beta}^{n}(1)=T_{-\beta}^{n}(0)$. Combining with the fact that $T_{-\beta}^{n}(1)=T_{-\beta}^{n}(0)=T_{-\beta}^{n-1}(1)$, we have $T_{-\beta}^{n-1}(1)=T_{-\beta}^{i}(1)$, for all $i\geq n$,  which indicates the orbit $\{T_{-\beta}^n(1)\}_{n\geq 0}$ arrives at a fixed point.

On the other side, if the orbit $\{T_{-\beta}^n(1)\}_{n\geq 0}$ arrives at some fixed point, without loss of generation, we assume that $T_{-\beta}^i(1)=T_{-\beta}^n(1)$ for all $i\geq n$.
By the fact that $T_{-\beta}^{n+1}(0)=T_{-\beta}^n(1)$, we prove the matching property for $T_{-\beta}$.
\end{proof}

\begin{remark} Let $\beta\in (1,+\infty)$ with $\pi_{-\beta}(1)= d_{ 1} d_{ 2} \cdots \in \{1,2,\ldots,\lfloor \beta \rfloor + 1\}^{\mathbb{N}}$.
Applying the proof of Lemma \ref{le:orbit-fixed}, we can quickly have that $\beta\in \mathcal{M}$ if and only if $\pi_{-\beta}(1)$ ends with $\overline{d}$ where $d\in \{1,2,\ldots,\lfloor \beta \rfloor + 1\}$. Moreover, since $1$ is not a fixed point for $T_{-\beta}$, we have  $\mathcal{M}\cap\mathcal{F}=\emptyset$.
\end{remark}

In the following we assume $\beta$ belongs to a special class of algebraic integers and give the matching properties.      Denote $$\mathcal{M}:=\{\beta\in(1,+\infty):T_{-\beta}\ {\rm has  \ matching \ property}\}.$$

\begin{theorem}\label{prorpo1}
If $\beta$ is a generalized multinacci number, then $T_{-\beta}$ has matching property.
\end{theorem}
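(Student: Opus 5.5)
By Lemma \ref{le:orbit-fixed}, it suffices to show that for $\beta=\beta_{q,m}$ the orbit $\{T_{-\beta}^n(1)\}_{n\ge0}$ eventually lands on a fixed point of $T_{-\beta}$. The fixed points are the points $x=\frac{d}{\beta+1}$ with digit $d\in\{1,\ldots,\lf\beta\rf+1\}$, whose $(-\beta)$-expansion is $d^\infty$. For $m=2$ one checks directly that $T_{-\beta}(1)=1-\frac{q}{\beta}=\frac{1}{\beta+1}$, which is the fixed point with digit $1$. So I aim to show that in general the orbit of $1$ reaches $\frac{1}{\beta+1}$. Equivalently, $\pi_{-\beta}(1)=w1^\infty$ for an explicit finite word $w$.

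\textbf{Step 1 (tracking the orbit).} Set $r:=\beta-q$. The defining equation gives
\[
r=\sum_{j=1}^{m-1}q\beta^{-j}\in(0,1),
\]
so $q<\beta<q+1$. I will show by induction that every orbit point $T_{-\beta}^n(1)$, up to the matching time, has the form
\[
1-\sum_{j\in S_n}q\beta^{-j}, \qquad S_n\subset\{1,\ldots,m-1\}.
\]
For example:
\begin{itemize}
\item $T_{-\beta}(1)=1-r$, so $S_1=\{1,\ldots,m-1\}$.
\item $\beta(1-r)=q\beta^{-(m-1)}\in(0,1)$, so the digit is $1$ and $T_{-\beta}^2(1)=1-q\beta^{-(m-1)}$.
\item $\beta T_{-\beta}^2(1)=q+\sum_{j\ne m-2}q\beta^{-j}$, which lies in $(q,q+1)$ because the sum is less than $r<1$. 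So the digit is $q+1$ and the next point again has the stated form.
\end{itemize}
At each step, multiplying by $\beta$ shifts the exponents in $S_n$ down by one. An exponent that reaches $0$ contributes $-q$, and this is absorbed either by the substitution $\beta=q+r$ or by the digit. Floors are controlled by the bound $\sum_{j\in S}q\beta^{-j}\le r<1$. This gives a deterministic rule $S_n\mapsto S_{n+1}$ together with the corresponding digits, which alternate between $1$ and $q+1$.

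\textbf{Step 2 (termination).} Since there are only finitely many subsets $S$, the orbit is eventually periodic. The real task is to show that the rule drives $S_n$ to the specific set $S^*$ for which
\[
1-\sum_{j\in S^*}q\beta^{-j}=\frac{1}{\beta+1},
\]
and does not enter a longer cycle. I expect to need the identity
\[
\frac{\beta}{\beta+1}=\sum_{j\in S^*}q\beta^{-j},
\]
and to check it by multiplying by $(\beta+1)\beta^{m-1}$ and reducing modulo $x^m-q(x^{m-1}+\cdots+1)$. I would first compute the cases $m=3,4$ by hand to guess $S^*$ and the word $w$; I expect $w$ to be an alternating pattern of $(q+1)$'s and $1$'s of length about $m$.

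\textbf{Step 3 (symbolic alternative).} If the orbit bookkeeping becomes unwieldy, I will instead guess $w$ from the small cases and verify two things:
\begin{itemize}
\item the value identity $\sum_n -d_n/(-\beta)^n=1$ for $d=w1^\infty$, which is a polynomial identity in $\beta$ using the defining equation;
\item that $w1^\infty$ satisfies the Ito--Sadahiro admissibility condition \eqref{eq:adimissible} (equivalently condition (1) of Proposition \ref{prop:beta-expansion-1}), namely $\sigma^k(w1^\infty)\preceq w1^\infty$ in the alternating order.
\end{itemize}
Together these show that $w1^\infty=\pi_{-\beta}(1)$. By the Remark following Lemma \ref{le:orbit-fixed}, it then follows that $\beta\in\mathcal{M}$.

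The main obstacle is Step 2 (or the admissibility check in Step 3): identifying the exact pattern for general $m$ and proving it terminates at $\frac{1}{\beta+1}$.
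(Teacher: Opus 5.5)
The orbit bookkeeping $T_{-\beta}^n(1)=1-\sum_{j\in S_n}q\beta^{-j}$, with digits alternating $q+1,1,q+1,\dots$, is essentially the mechanism of the paper's proof. However, the target you set is false for odd $m$.

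Already for $m=3$:
\[
T_{-\beta}(1)=q\beta^{-3},\qquad T_{-\beta}^2(1)=1-q\beta^{-2},\qquad \beta T_{-\beta}^2(1)=q+q\beta^{-2},
\]
so the next digit is $q+1$ and $T_{-\beta}^3(1)=1-q\beta^{-2}=T_{-\beta}^2(1)$. The orbit therefore freezes at the fixed point $\frac{q+1}{\beta+1}$. So $\pi_{-\beta}(1)=(q+1)1(q+1)^\infty$, which is not of the form $w1^\infty$. Since a fixed point is never left, $\frac{1}{\beta+1}$ is never reached; indeed no $S\subseteq\{1,2\}$ satisfies $\sum_{j\in S}q\beta^{-j}=\frac{\beta}{\beta+1}$. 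The same happens for every odd $m$, so both Step 2 and the guess in Step 3 fail there.

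Step 3 would also need a different justification even with the right word. The condition \eqref{eq:adimissible} is formulated in terms of $\pi_{-\beta}(1)$, the very object you are trying to identify. Self-admissibility plus the value identity does not by itself show that the sequence is the expansion of $1$ in this base. The correct criterion is that every tail value $\sum_{i\ge1}-d_{n+i-1}/(-\beta)^i$ lies in $(0,1]$.

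More fundamentally, the proposal omits the actual content of the theorem. Finiteness of the possible $S_n$ only gives eventual periodicity, and a cycle of length greater than one would not give matching. What is needed is the explicit orbit, which is the paper's formula \eqref{eq:T-beta-q-m-1}. For $1\le k\le m-1$:
\begin{itemize}
\item if $k$ is even, $S_k=\{m-1,m-3,\dots,m-k+1\}$;
\item if $k$ is odd, $S_k=\{1,\dots,m-1\}\setminus\{m-2,m-4,\dots,m-k+1\}$, where the removed set is empty for $k=1$.
\end{itemize}
This follows by a straightforward induction from $\sum_{j=1}^m q\beta^{-j}=1$ together with your floor bound.

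At $k=m-1$ this gives
\[
x^*:=T_{-\beta}^{m-1}(1)=\sum_{0\le i<m/2}q\beta^{-(m-2i)}.
\]
Pairing the terms of $\beta x^*$ with those of $x^*$ yields
\[
(\beta+1)x^*=\sum_{j=1}^m q\beta^{-j}=1 \quad\text{for even } m,\qquad (\beta+1)x^*=q+\sum_{j=1}^m q\beta^{-j}=q+1 \quad\text{for odd } m.
\]
Hence $x^*$ is the fixed point with digit $1$ (even $m$), resp. $q+1$ (odd $m$), and $T_{-\beta}^m(1)=T_{-\beta}^{m-1}(1)$. Correspondingly $\pi_{-\beta}(1)=((q+1)1)^{m/2}1^\infty$, resp. $((q+1)1)^{(m-1)/2}(q+1)^\infty$. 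Lemma \ref{le:orbit-fixed} then finishes the proof.
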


\begin{proof}
Let $\beta = \beta_{q ,m}$, $q\geq 1$, $m\geq 2$ be a generalized multinacci number, i.e.,
$
\beta^m = q\beta^{m-1} + q\beta^{m-2} + \cdots + q.
$
Divide both sides of the equation by $\beta^m$ and $\beta^{m-1}$, we obtain:
\begin{equation}\label{eq1}
1 = q \cdot \beta^{-1} + q \cdot \beta^{-2} + \cdots + q \cdot \beta^{-m}\quad \text{and}\quad \beta = q +q\cdot \beta^{-1} + q \cdot \beta^{-2} + \cdots + q \cdot \beta^{-m+1}.
\end{equation}
By Lemma \ref{le:orbit-fixed}, we only need to prove that there exists an $n = n(\beta)$ such that $T_{-\beta}^k(1) = T_{-\beta}^n(1)$ for all $k \geq n$. For generalized multinacci number $\beta = \beta_{q ,m}$,
we claim that $n=m-1$ and for $1\leq k\leq m-1$,
\begin{equation}\label{eq:T-beta-q-m-1}
T_{-\beta}^k(1) =
\begin{cases}
\sum\limits_{i=0}^{\frac{k-1}{2}} \frac{q}{\beta^{m-2i}}=\frac{q}{\beta^m}+
\frac{q}{\beta^{m-2}}+\ldots+\frac{q}{\beta^{m-k+1}},\, k\text{ is odd;}\\
\sum\limits_{i=0}^{\frac{k}{2} } \frac{q}{\beta^{m-2i}}+\sum\limits_{i=1}^{m-k-1}\frac{q}{\beta^{i}}=
\frac{q}{\beta^m}+\ldots+\frac{q}{\beta^{m-k}}+\frac{q}{\beta^{m-k-1}}
+\ldots+\frac{q}{\beta},\, k\text{ is even.}
\end{cases}
\end{equation}
And we denote $0=\sum_{i=1}^0\frac{q}{\beta^i}$.
 We now compute the orbit $\{T_{-\beta}^k(1)\}_{k \geq 0}$ inductively.

For $k=1$, by the fact that $q < \beta < q+1$, we have
\[
T_{-\beta}^1(1) = -\beta \cdot 1 + \lfloor \beta \cdot 1 \rfloor + 1 = -\beta + q + 1=\frac{q}{\beta^m},
\]
where that last equation follows by \eqref{eq1}. We have that \eqref{eq:T-beta-q-m-1} for $k=1$.

For $k=2$, by $0<\beta \cdot T_{-\beta}^1(1) = \frac{q}{\beta^{m-1}}<1$, we obtain that
\[
T_{-\beta}^2(1) = -\beta \cdot T_{-\beta}^1(1) + 0 + 1 = -\frac{q}{\beta^{m-1}} + 1=\sum_{i=1}^{m-2}\frac{q}{\beta^i}+\frac{q}{\beta^m},
\]
where that last equation follows by \eqref{eq1} and $\sum_{i=1}^0\frac{q}{\beta^i}=0$.
Note that, for $m=2$, $$T_{-\beta}^2(1)=T_{-\beta}(1)=-\frac{q}{\beta^m}.$$ The left cases are $m\geq 3$.

We suppose that \eqref{eq:T-beta-q-m-1} holds for even $1\leq k\leq m-2$, i.e.,
$$T_{-\beta}^k(1)=\sum\limits_{i=0}^{\frac{k}{2} } \frac{q}{\beta^{m-2i}}+\sum\limits_{i=1}^{m-k-1}\frac{q}{\beta^{i}}=
\frac{q}{\beta^m}+\ldots+\frac{q}{\beta^{m-k}}+\frac{q}{\beta^{m-k-1}}
+\ldots+\frac{q}{\beta}.$$
Then,
$$T_{-\beta}^{k+1}(1)=-\beta T_{-\beta}^k(1) +q+1=\sum\limits_{i=0}^{\frac{k+1-1}{2} } \frac{q}{\beta^{m-2i}}=
\frac{q}{\beta^m}+\ldots+\frac{q}{\beta^{m-k}}.$$
Similarly, if \eqref{eq:T-beta-q-m-1} holds for odd $1\leq k\leq m-2$, i.e.,
$$T_{-\beta}^k(1) =
\sum\limits_{i=0}^{\frac{k-1}{2}} \frac{q}{\beta^{m-2i}}=\frac{q}{\beta^m}+
\frac{q}{\beta^{m-2}}+\ldots+\frac{q}{\beta^{m-k+1}}.$$
Then,
\begin{align*}
T_{-\beta}^{k+1}(1)=-\beta T_{-\beta}^k(1) +1=&\sum\limits_{i=0}^{\frac{k+1}{2} } \frac{q}{\beta^{m-2i}}+\sum_{i=1}^{k-2}\frac{q}{\beta^i}\\
=&
\frac{q}{\beta^m}+\ldots+\frac{q}{\beta^{m-k-1}}+
\frac{q}{\beta^{m-k-2}}+\ldots+\frac{q}{\beta}.
\end{align*}
Then we prove that \eqref{eq:T-beta-q-m-1} holds for even $k+1$.

Now we consider even $k=m$, by \eqref{eq:T-beta-q-m-1}, we have
\begin{equation*}
T_{-\beta}^{m-1}(1) =
\sum\limits_{i=0}^{\frac{m-2}{2}} \frac{q}{\beta^{m-2i}}=\frac{q}{\beta^m}+
\frac{q}{\beta^{m-2}}+\ldots+\frac{q}{\beta^{2}}.
\end{equation*}
By a simple calculation, we have
$$T_{-\beta}^m(1)=
-\beta T_{-\beta}^{m-1}(1) +1= \frac{q}{\beta^m}+
\frac{q}{\beta^{m-2}}+\ldots+\frac{q}{\beta^{2}}=T_{-\beta}^{m-1}(1).
$$
Similarly, we can prove $T_{-\beta}^m(1)=T_{-\beta}^{m-1}(1)$ for odd $m$.
\end{proof}

\begin{remark}
The inverse direction of Proposition \ref{prorpo1} is not right since we can find counterexamples such that $\beta$ is not a generalized multinacci number but  $\beta\in \mathcal{M}$. For instance, let $\pi_{-\beta}(1)=211\overline{2}$ with $\beta\approx1.324717\in(1,2)$.
\end{remark}
\subsection{Denseness of simple $(-\beta)$ numbers}
In this subsection, we will give the full proof of the density property of simple negative $\beta$ numbers (see Theorem \ref{thm:sft-matching}).
Our strategy to prove Theorem \ref{thm:sft-matching} is to construct a periodic sequence satisfying the conditions in Proposition \ref{prop:beta-expansion-1} such that it can be arbitrarily closed to  any non-periodic $ d_1 d_2\cdots$.

We call an infinite sequence $ d_1 d_2\cdots\in \mathcal{A}^\N$ is \emph{self-admissible}, if it satisfies the condition $(1)$ in Proposition \ref{prop:beta-expansion-1}, i.e., $ d_k d_{k+1}\cdots \preceq  d_1 d_2\cdots$ for all $k \geq 1$.  First, we state that for any non-periodic sequence, there are self-admissible periodic sequences can arbitrarily closed to  $ d_1 d_2\cdots$.

\begin{lemma}\label{le:contract-periodic}
Let $\beta \in (1, +\infty)$ with negative $\beta$ expansion    $\pi_{-\beta}(1) =  d_1 d_2\cdots$ of 1 under $\beta$ is non-periodic. Then for any  $\varepsilon > 0$, one can find a self-admissible periodic sequence $\upsilon=\upsilon(-\beta,\varepsilon)$ with $\rho(\pi_{-\beta}(1), \upsilon) < \varepsilon$.
\end{lemma}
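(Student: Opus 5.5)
The plan is to take as candidate the periodic sequence $\upsilon=(d_1\cdots d_n)^\infty$ for a suitably chosen large $n$. Since $\rho(\pi_{-\beta}(1),(d_1\cdots d_n)^\infty)\le(\lf\beta\rf+1)^{-(n+1)}$, closeness is automatic once $n$ is large. The real work is to choose $n$ so that $\upsilon$ is self-admissible, i.e.\ $\sigma^k\upsilon\preceq\upsilon$ for $1\le k<n$. The natural choice is to let $n$ run through the indices at which the prefix $d_1\cdots d_n$ is ``maximal'' among its cyclic shifts in the alternating order.

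First I will record the structure of $\mathbf d=\pi_{-\beta}(1)$. By Proposition \ref{prop:beta-expansion-1}(1), $\sigma^k\mathbf d\preceq\mathbf d$ for all $k\ge1$. Because $\mathbf d$ is non-periodic, the inequality is strict for every $k\ge1$: if $\sigma^k\mathbf d=\mathbf d$, then $\mathbf d$ would be periodic. Hence for each $k$ there is a first index $j=j(k)$ where $d_{k+j}\neq d_j$, and the comparison at position $j$ goes the ``right'' way. That is, $d_{k+j}<d_j$ if $j$ is odd and $d_{k+j}>d_j$ if $j$ is even.

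Second, I will produce infinitely many good cut points $n$. Given $N$, I pick $n\ge N$ of the form $n=k+j(k)-1$ for suitable $k$, so that $d_1\cdots d_n$ ends just before a strict drop. Concretely, I take $n$ such that $\sigma^{k}\mathbf d$ and $\mathbf d$ agree on $d_{k+1}\cdots d_n=d_1\cdots d_{n-k}$ and differ at the next position in the favourable direction.

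An alternative, and cleaner, route is a parity trick. Choose $n$ large with $d_1\cdots d_n$ such that, for every $1\le k<n$, the first disagreement between $d_{k+1}\cdots d_n$ and $d_1\cdots d_{n-k}$ occurs inside the block. The finitely many $k$ with $j(k)\le n-k$ then satisfy $\sigma^k\upsilon\prec\upsilon$ directly, since the comparison is decided within one period.

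The remaining $k$ are those where $d_{k+1}\cdots d_n=d_1\cdots d_{n-k}$ is a border of the prefix. For these I compare $\sigma^k\upsilon=d_{k+1}\cdots d_n(d_1\cdots d_n)^\infty$ with $\upsilon$. After the common part of length $n-k$, this reduces to comparing $\upsilon$ with $\sigma^{n-k}\upsilon$, shifted in parity by $n-k$. If $n-k$ is odd the order flips, and this is exactly the obstacle.

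The main obstacle is handling these borders. I would first try to choose $n$ so that $d_1\cdots d_n$ is unbordered, or at least so that every border has even length. Failing that, I would try modifying the last letter: replace $d_n$ by $d_n\pm1$ (sign chosen by the parity of $n$) so that the block becomes strictly larger than all its proper suffixes in the alternating order. This is similar to the $(d_1\cdots d_{k-1}(d_k\pm1))$ blocks appearing in Proposition \ref{prop:beta-expansion-1}(3),(4).

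To make the modification available, I would pick $n$ with $d_n<\lf\beta\rf+1$ (respectively $d_n>1$) at the needed parity. Non-periodicity guarantees infinitely many such indices, since otherwise $\mathbf d$ is eventually constant. Eventually constant sequences are handled separately: for such $\mathbf d$, a direct choice like $d_1\cdots d_n$ with a final tweak gives a periodic self-admissible sequence.

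Finally, the perturbation changes only position $n$, so $\rho\le(\lf\beta\rf+1)^{-n}<\varepsilon$ for $n$ large, which completes the argument.
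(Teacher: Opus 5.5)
There is a genuine gap. The one step with real content is choosing $n$ so that $\upsilon=(d_1\cdots d_n)^\infty$ is self-admissible, and it is never carried out; the devices you offer for it rest on a parity error.

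Write $w=d_1\cdots d_n$ and suppose $d_{k+1}\cdots d_n=d_1\cdots d_L$ with $L=n-k$. Then $\sigma^k\upsilon=d_1\cdots d_L\,\upsilon$ and $\upsilon=d_1\cdots d_L\,\sigma^L\upsilon$.
\begin{itemize}
\item For odd $L$ the order flips, so $\sigma^k\upsilon\prec\upsilon\iff\sigma^L\upsilon\prec\upsilon$. This is just self-admissibility at the shorter shift $L$, so odd borders are harmless.
\item For even $L$ there is no flip, and $\sigma^k\upsilon\preceq\upsilon$ becomes $\upsilon\preceq\sigma^L\upsilon$. Together with the requirement at shift $L$ this forces $\sigma^L\upsilon=\upsilon$, which is impossible when $n$ is the primitive period.
\end{itemize}
So the flip is not the obstacle, and ``every border has even length'' is the fatal case, not a fallback.

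Each of your devices fails for a concrete reason.
\begin{itemize}
\item Your cut $n=k+j(k)-1$ ``just before a strict drop'' makes $d_{k+1}\cdots d_n$ a border of length $j(k)-1$. It also leaves the drop outside the block, where it plays no role.
\item The last-letter tweak does not make $w$ larger than all its proper suffixes. Raising $w$ at position $n$ also raises every border of length $L\equiv n\pmod 2$ at that border's last position $L$, so such a suffix now exceeds the prefix.
\item The tweak can also turn a suffix that lost to the prefix by one unit in its last letter into a new even border.
\item Long unbordered prefixes need not exist.
\end{itemize}

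Here is a concrete example where all of these fail at once. The sequence $\mathbf d=3212\overline{3}$ satisfies the four conditions of Proposition \ref{prop:beta-expansion-1}, so it is a non-periodic $\pi_{-\beta}(1)$ with $2<\beta<3$.
\begin{itemize}
\item Every prefix $3212\,3^m$ with $m\ge 1$ has the border $3$, so it is neither unbordered nor ``all borders even''.
\item For odd $n$ your tweak is unavailable, since $d_n=3$ is already the maximal digit.
\item For even $n$ the tweak yields $3212\,3^{m-1}2$. Its suffix $32$ is now an even border, and $\sigma^{n-2}\upsilon=323\cdots\succ 321\cdots=\upsilon$.
\item Yet $(3212\,3^m)^\infty$ itself is self-admissible, and its only border has odd length.
\end{itemize}

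The missing idea is to choose the cut so that a drop lies \emph{inside} the block, at a position that destroys every even-length border and leaves only odd ones. This is what the paper does, in two cases.
\begin{itemize}
\item If $\lfloor\beta\rfloor+1$ occurs only finitely often, it cuts far beyond the last occurrence. Counting occurrences of the maximal digit then shows the block has no border at all.
\item Otherwise it takes $n$ with $d_n=\lfloor\beta\rfloor+1$, the least $j$ with $d_{j+1}\cdots d_n=d_1\cdots d_{n-j}$, and the least $k\ge n$ with $d_{k+1}\ne d_{k-j+1}$. It then cuts at $k+1$ or $k+2$ according to the parity of $k-j$.
\end{itemize}
With that cut, every even-length suffix $d_{i+1}\cdots$ of the block with $i\ge j$ inherits the strict drop in the favourable direction, and minimality of $j$ handles $i<j$. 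The surviving odd-length borders are treated by the reduction above, with non-periodicity of $\mathbf d$ excluding the equality case.
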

\begin{proof}
The strategy is to construct a periodic self-admissible sequence by selecting a sufficiently long initial segment (prefix) of the sequence $\pi_{-\beta}(1)$. Note that $d_1=\lf \beta\rf+1$ and $\sigma^i(\pi_{-\beta}(1)) \prec \pi_{-\beta}(1)$ if  $d_{k+1}\neq \lf\beta\rf+1$.
We divide the proof into two cases with several subcases by consider the digit value equals to $\lf\beta\rf+1$ or not.

\noindent\textbf{Case 1:} The sequence $\pi_{-\beta}(1) = d_1 d_2\cdots$ contains finitely many $\lfloor \beta \rfloor + 1$. Let \begin{equation}\label{eq:N-v-n-1}
 N:=\max\{k:d_{k+1}=\lfloor \beta \rfloor + 1\},\quad \text{and}\quad \upsilon_n := \overline{ d_1\cdots d_{2N}d_{2N+1}\ldots d_{2N+n}},\text{ for } n\geq 1.\end{equation}
 We now verify that $\sigma^{i}(\upsilon_n) \prec \upsilon_n$ for all $i \geq 1$.
Since $\upsilon_n$ is periodic, we only need to consider $1\leq i \leq 2N+n-1$.
For $N+1\leq i \leq 2N+n-1 $, by \eqref{eq:N-v-n-1}, we have that $ d_{i+1}<\lfloor \beta \rfloor + 1$. Then, $d_{i+1}\cdots  d_{2N+l} \prec \pi_{-\beta}(1)|_{2N+l-i}$, which implies that $\sigma^i(\upsilon_n) \prec \upsilon_n$. For $1 \leq i \leq N$, the self-admissibility of $\pi_{-\beta}(1)$ gives $ d_{i+1}\cdots d_{N+1}\cdots  d_{2N+l} \preceq \pi_{-\beta}(1)|_{2N+l-i}$, where the equality cannot hold since the number of $(\lfloor \beta \rfloor + 1)$ in $ d_{i+1}\cdots d_{N+1}\cdots  d_{2N+l}$ is less than $\pi_{-\beta}(1)|_{2N+l-i}$. So, $\sigma^i(\upsilon_n) \prec \upsilon_n$ for $1 \leq i \leq N$. Thus, we prove that $\sigma^i(\upsilon_n) \prec \upsilon_n$ for $1 \leq i \leq 2N+l-1$.

\noindent\textbf{Case 2:} The sequence $\pi_{-\beta}(1) = d_1 d_2\cdots$ contains infinitely many $\lfloor \beta \rfloor + 1$.
Choose an $n \geq 3$ with $ d_n = \lfloor \beta \rfloor + 1$. Let $1\leq j\leq n-1 $ be the minimal integer satisfying $ d_{j+1}\cdots d_{n} = d_{1}\cdots d_{n-j}$ (existence guaranteed by $ d_1 =  d_n = \lfloor \beta \rfloor + 1$).
By self-admissibility and non-periodicity of $\pi_{-\beta}(1)$, there exists a minimal $k \geq n$ such that $\sigma^k(\pi_{-\beta}(1)) \prec \pi_{-\beta}(1)$ with $ d_{k-j+1} \neq  d_{k+1}$, which means that
\begin{equation}\label{eq:d-j-n-k}
 d_{j+1}\cdots d_{n}\cdots  d_{k} = d_{1}\cdots d_{n-j}\cdots d_{k-j}.
\end{equation}
The relation between $ d_{k+1}$ and $ d_{k-j+1}$ depends on the sign of $(-1)^{k-j}$. We consider two subcases, which are odd $(k-j)$  and even $(k-j)$.

\noindent\textbf{Subcase 1:} $(k-j)$ is odd. The alternating order implies $ d_{k-j+1} <  d_{k+1}$ since $\sigma^k(\pi_{-\beta}(1)) \prec \pi_{-\beta}(1)$. Set
\begin{equation}\label{eq:v-2}
\upsilon_n: = \overline{ d_1 d_2\cdots d_k d_{k+1}}.
\end{equation}
 To prove $\sigma^i(\upsilon_n) \prec \upsilon_n$ for all $i\geq 1$, we only need to verify  shifts where $ d_{i+1} = \lfloor \beta \rfloor + 1$ for $1\leq i\leq k$ since $\upsilon_n$ is periodic and $ d_{i+1} < \lfloor \beta \rfloor + 1$ are trivial.

For $1 \leq i < j$, self-admissibility and minimality of $j$ yield
\[
 d_{i+1}\cdots d_n \prec  d_1\cdots d_{n-i}.
\]
So, $\sigma^i(\upsilon_n) \prec \upsilon_n$. For $j \leq i < k+1$ with $(k-i)$ odd, \eqref{eq:d-j-n-k} gives $ d_{i+1}\cdots  d_k = d_{i-j+1}\cdots d_{k-j}$. Then, $ d_{k-j+1} <  d_{k+1}$ and self-admissibility give
\[
 d_{i+1}\cdots d_k d_{k+1} \prec d_{i-j+1}\cdots d_{k-j} d_{k-j+1} \preceq  d_1\cdots d_{k-i} d_{k-i+1},
\]
which proves that $\sigma^i(\upsilon_n) \prec \upsilon_n$. If $j \leq i < k+1$ with $(k-i)$ even, the self-admissibility of  $\pi_{-\beta}(1)$ gives
\[
 d_{i+1}\cdots d_k d_{k+1} \preceq  d_1\cdots d_{k-i+1}.
\]
If $d_{i+1}d_{i+2} \cdots d_{k}d_{k+1}\prec d_{ 1}d_{ 2} \cdots d_{k-i+1}$, then obviously $\sigma^{i}(v_n)\prec v_n$. If there exist some $i$ such that $d_{i+1}d_{i+2} \cdots d_{k}d_{k+1}=d_{ 1}d_{ 2} \cdots d_{k-i+1}$, then combing with the fact that $d_{ 1}d_{ 2} \cdots d_{k-i+1}\succeq d_{k-i+2}\cdots d_{k+1}$ and $(k-i+1)$ is odd, we have
\begin{equation}\label{eq222}
(d_{i+1}  \cdots   d_{k+1})(d_{ 1}  \cdots d_{i})\preceq(d_{ 1} \cdots d_{k-i+1})(d_{k-i+2}\cdots d_{k+1}).
\end{equation}
\par Next we prove that the inequality (\ref{eq222}) is strictly ``$\prec$'' by contradiction.
If the ``='' holds in inequality (\ref{eq222}),  then
\begin{equation}\label{eq22222}
d_{i+1} \cdots d_{k+1}=d_{ 1}  \cdots d_{k-i+1}, \ {\rm and} \ d_{ 1}  \cdots d_{i}= d_{k-i+2}\cdots d_{k+1}.\end{equation}
There are two cases need to be considered.  Case 1, $k-i+1>i$. Denote the finite word by $d_{ 1}  \cdots d_{i}=\mathbf{a}$, and hence $d_{ 1}  \cdots d_{k-i+1}=\mathbf{ab}$. By (\ref{eq22222}), we have $d_1\cdots d_{k+1}=\mathbf{aba}=\mathbf{aab}$. Hence there exists finite word $\mathbf{c}$ and integers $s$ and $t$ such that $\mathbf{a}=\mathbf{c}^s$ and $\mathbf{b}=\mathbf{c}^t$. Since $|\mathbf{ab}|=k-i+1$ is odd, $(d_{k+2} \cdots d_{k+i+1})$ can only be $\mathbf{a}$, and $\pi_{-\beta}(1)|_{k+i+1}=\mathbf{aaba}=\mathbf{aaab}$. Recursively, $(d_{k+i+2} \cdots d_{k+2i+1})$ can only be $\mathbf{a}$ and $\pi_{-\beta}(1)=\mathbf{ab}\overline{\mathbf{a}}=\overline{\mathbf{c}}$, which contradicts with our initial assumption that $\pi_{-\beta}(1)$ is not periodic. Case 2, $k-i+1\leq i$. Denote the finite word by $d_{ 1}  \cdots d_{k-i+1}=\mathbf{u}$ and hence $d_{1}\cdots d_{i}=\mathbf{uv}$ ($\mathbf{v}$ is empty when $k-i+1= i$). Also by (\ref{eq22222}), we have $d_1\cdots d_{k+1}=\mathbf{uvu}=\mathbf{uuv}$, and there exists finite word $\mathbf{e}$ and integers $l$ and $r$ such that $\mathbf{u}=\mathbf{e}^l$ and $\mathbf{v}=\mathbf{e}^r$.
By the self-admissibility of $\pi_{-\beta}(1)$ and odd $(k-i+1)$, we have $(d_{ k+2} \cdots d_{k+i+1}) =\mathbf{uv}$. 
and $\pi_{-\beta}(1)|_{k+i+1}=\mathbf{uuvuv}=\mathbf{uuvvu}$.
Recursively, $(d_{k+i+2} \cdots d_{k+2i+1})$ can only be $\mathbf{uv}$ and $\pi_{-\beta}(1)=\mathbf{u}\overline{\mathbf{uv }}=\overline{\mathbf{e}}$, which contradicts with our initial assumption that $\pi_{-\beta}(1)$ is not periodic. As a result, we have $\sigma^{i}(v_n)\prec v_n$ when $j\leq i < k+1$ with even number $(k-i)$.

\noindent\textbf{Subcase 2:} $(k-j)$ is even. By the definition of alternating lexicographic order, we have  $ d_{k-j+1}>d_{k+1} $. In this case, we let
\begin{equation}\label{eq:v-22}v_n=\overline{d_{1}\cdots d_{k}d_{k+1}d_{k+2}},
\end{equation}
 and prove that $v_n$ is self-admissible, i.e., $\sigma^{i}(v_n)\prec v_n$ for all $1\leq i < k+2$. Here we still only need to consider $d_{i+1}=\lfloor \beta \rfloor + 1$, because $\sigma^{i}(v_n)\prec v_n$ is clear when $d_{i+1}<\lfloor \beta \rfloor + 1$.
Similar to  Subcase 1, if $1\leq i < j$, by the self-admissibility of $\pi_{-\beta}(1)$ and the minimality of $j$, we have
$$d_{i+1}\cdots d_{n}\prec d_{1}\cdots d_{n-i}.$$
Hence we have $\sigma^{i}(v_n)\prec v_n$ for $1\leq i < j$. Next we consider the case that $j\leq i < k+2$ with even number $(k-i)$. Since $d_{i+1}\cdots d_{k}=d_{i-j+1} \cdots d_{k-j}$, we have
\[
\begin{aligned}
d_{i+1} \cdots d_{k}d_{k+1}&\prec d_{i-j+1} \cdots d_{k-j}d_{k-j+1}\\
& \preceq d_{1} d_{2}\cdots d_{k-i}d_{k-i+1}.
\end{aligned}
\]
Hence $\sigma^{i}(v_n)\prec v_n$ when $j\leq i < k+1$ with even number $(k-i)$. It remains to consider the case that $j\leq i < k+2$ with odd number $(k-i)$. By the self-admissibility of $\pi_{-\beta}(1)$, we have $d_{i+1} \cdots d_{k+2}\preceq d_{ 1}\cdots d_{k-i+2}$.
If $d_{i+1} \cdots d_{k+2}\prec d_{ 1} \cdots d_{k-i+2}$, then obviously $\sigma^{i}(v_n)\prec v_n$.
If there exist some $i$ such that $d_{i+1} \cdots d_{k+2}= d_{ 1} \cdots d_{k-i+2},$ totally similar to the proof in Subcase 1, we can check it into two cases: $k-i+2>i$ and $k-i+2\leq i$. As a result, we have $\sigma^{i}(v_n)\prec v_n$ when $j\leq i < k+2$ with odd number $(k-i)$.
 \par These constructions produce infinitely many self-admissible periodic sequences $\upsilon_n$ sharing arbitrarily long prefixes with $\pi_{-\beta}(1)$. Under the metric $\rho$ on $\{1,2,\dots,\lfloor \beta \rfloor + 1\}^{\mathbb{N}}$, the sequences $\upsilon_n$ converge to $\pi_{-\beta}(1)$.
\end{proof}

\begin{remark}\label{re1}
The above approximation can be achieved in two ways: from below and from above.
If the period of the periodic self-admissible sequence $\upsilon_n$ we constructed is odd, then $\upsilon_n \prec \pi_{-\beta}(1)$; if the period is even, then $\upsilon_n \succ \pi_{-\beta}(1)$.
\end{remark}

\begin{example}\label{ex1}\rm
Here we give two examples to illustrate the construction in Lemma \ref{le:contract-periodic}.
\begin{enumerate}
  \item Let $\beta \in (1,2)$ be the real root of $\beta^4 - \beta^3 - \beta^2 - \beta - 1 = 0$. So $\pi_{-\beta}(1) = 212\overline{1}$. Since the number of symbol $2$ in $\pi_{-\beta}(1)$ is finite, by \eqref{eq:N-v-n-1}, set $N = 2$ and define $\upsilon_n = \overline{2121(1)^n}$ for $n \geq 1$. Then, $\upsilon_n$ is a periodic self-admissible sequence and $\lim_{n \to \infty} \rho(\pi_{-\beta}(1), \upsilon_n) = 0$.
  \item {Let $\beta \in (1,2)$ satisfy $\pi_{-\beta}(1) = 21212211\overline{21212212}$, which contains infinitely many symbols $2$. By \eqref{eq:v-2}, we construct infinitely many self-admissible periodic sequences by letting $\upsilon_n = \overline{(2121221)1(2121221)^n2}$ for $n\geq 1$.}
\end{enumerate}
\end{example}
We call an infinite sequence \emph{$(-\beta)$-admissible} if it satisfies the four conditions in Proposition \ref{prop:beta-expansion-1}. In other words, if $d_1d_2\ldots$ is  a $(-\beta)$-admissible sequence, there exists an unique $\beta > 1$ such that the negative  $\beta$-expansion of $1$ is $d_1d_2\ldots$.

\begin{theorem}\label{thm1}
The set of simple $(-\beta)$ numbers is dense in $(1,+\infty)$.
\end{theorem}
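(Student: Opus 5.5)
The plan is to combine Lemma \ref{le:contract-periodic} with the order isomorphism of Proposition \ref{prop:order-relation}. Fix $\beta_0\in(1,+\infty)$ and $\delta>0$. We want a simple $-\beta$ number in $(\beta_0-\delta,\beta_0+\delta)$. If $\pi_{-\beta_0}(1)$ is already periodic, there is nothing to prove. So assume $\pi_{-\beta_0}(1)=d_1d_2\cdots$ is non-periodic.

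By Lemma \ref{le:contract-periodic} we get self-admissible periodic sequences $\upsilon_n$ agreeing with $\pi_{-\beta_0}(1)$ on prefixes of length tending to infinity. We then check that each $\upsilon_n$, at least for large $n$, satisfies all four conditions of Proposition \ref{prop:beta-expansion-1}, not only condition (1).

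\textbf{Condition (2).} Since $\pi_{-\beta_0}(1)\succ w$ strictly and the order topology coincides with the $\rho$-topology, every sequence sharing a long enough prefix with $\pi_{-\beta_0}(1)$ is also $\succ w$. This uses that $\pi_{-\beta_0}(1)\neq w$, which holds because $w$ is not an expansion of $1$ for any $\beta>1$. Hence $\upsilon_n\succ w$ for $n$ large.

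\textbf{Conditions (3) and (4).} These are the delicate part, and I expect them to be the main obstacle. For a periodic $\upsilon_n=\overline{u_1\cdots u_p}$, membership in $\{u_1\cdots u_k,\,u_1\cdots u_{k-1}(u_k-1)\}^\infty\setminus\{(u_1\cdots u_k)^\infty\}$ or in $\{u_1\cdots u_k1,\,u_1\cdots u_{k-1}(u_k+1)\}^\infty$ forces strong repetition inside the prefix $u_1\cdots u_p$. I would argue as follows.

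\begin{itemize}
\item If $\upsilon_n$ were such a concatenation with block length $k$ small compared with the agreed prefix length, then $\pi_{-\beta_0}(1)$ would begin with a long word built from the same two blocks. Passing to the limit along $n$, using finitely many candidate $k$ for bounded $k$, we get either that $\pi_{-\beta_0}(1)$ itself lies in such a set, contradicting Proposition \ref{prop:beta-expansion-1} for $\beta_0$, or that $\pi_{-\beta_0}(1)$ is periodic, contradicting the assumption.
\item If instead $k$ is comparable to the period $p$, the block decomposition of one period essentially uses the prefix itself. We then exploit the explicit form of $\upsilon_n$ in the Case 1 and Case 2 constructions, where the last letter was altered relative to $\pi_{-\beta_0}(1)$, to show that the second block cannot occur.
\end{itemize}

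If this gets messy, a fallback is to modify the construction slightly. For example, we could use $\upsilon_n'=\overline{d_1\cdots d_{L}c}$ with a suitable last letter $c$, chosen by parity so that it is strictly maximal in alternating order among admissible choices. Condition (3)/(4) violations then become equalities that contradict the strict inequalities already established in the proof of Lemma \ref{le:contract-periodic}.

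\textbf{Conclusion.} Once $\upsilon_n$ is shown to be $(-\beta)$-admissible in the sense of Proposition \ref{prop:beta-expansion-1}, there is a unique $\beta_n>1$ with $\pi_{-\beta_n}(1)=\upsilon_n$, and $\beta_n$ is a simple $-\beta$ number. It remains to show $\beta_n\to\beta_0$.

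By Remark \ref{re1} we may arrange approximants from both sides: odd-period ones with $\upsilon_n\prec\pi_{-\beta_0}(1)$, and even-period ones with $\upsilon_n\succ\pi_{-\beta_0}(1)$. Alternatively we simply use the monotonicity of $\beta\mapsto\pi_{-\beta}(1)$ from Proposition \ref{prop:order-relation}.

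Suppose $\beta_n\not\to\beta_0$. Then along a subsequence $\beta_n\le\beta_0-\delta$ or $\beta_n\ge\beta_0+\delta$. In the first case $\upsilon_n=\pi_{-\beta_n}(1)\preceq\pi_{-(\beta_0-\delta)}(1)\prec\pi_{-\beta_0}(1)$; the second case gives the symmetric inequality. But $\pi_{-(\beta_0-\delta)}(1)$ differs from $\pi_{-\beta_0}(1)$ at some fixed position, so $\upsilon_n$ cannot share arbitrarily long prefixes with $\pi_{-\beta_0}(1)$. This contradicts the construction, so $\beta_n\to\beta_0$, which proves density.
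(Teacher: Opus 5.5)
\textbf{There is a genuine gap, at exactly the step you flagged: conditions (3) and (4) of Proposition \ref{prop:beta-expansion-1} for the approximants.}

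Your second bullet assumes that the last letter of the period of $\upsilon_n$ was altered relative to $\pi_{-\beta_0}(1)$. That is not so. In every construction of Lemma \ref{le:contract-periodic}, the period block ($d_1\cdots d_{2N+n}$, $d_1\cdots d_{k+1}$ or $d_1\cdots d_{k+2}$) is an unaltered prefix of $\pi_{-\beta_0}(1)$, and this can be fatal.

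Take the paper's Example \ref{ex1}(1): $\pi_{-\beta_0}(1)=212\overline{1}$ and $\upsilon_n=\overline{2121\,1^n}$, with period $p=n+4$. Then $\sigma^{p-1}(\upsilon_n)=1\upsilon_n$. If $\upsilon_n=\pi_{-\beta}(1)$ for some $\beta$, this tail would be the expansion of $T_{-\beta}^{p-1}(1)\in(0,1]$. But $1\upsilon_n$ has value $\frac1\beta-\frac1\beta\cdot 1=0$.

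In terms of Proposition \ref{prop:beta-expansion-1}: $\upsilon_n\in\{d_1\cdots d_k1,\,d_1\cdots d_{k-1}(d_k+1)\}^\infty$ with $k=p-1$. The side condition holds, because $d_1\cdots d_{k-1}(d_k+1)$ begins with $212$ while $w$ begins with $211$. So (4) fails for every $n$, and none of these $\upsilon_n$ is the expansion of $1$ for any base.

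Since $k$ is comparable to the period, your first bullet does not see this violation, and there is no altered letter for the second bullet to exploit. The same failure occurs whenever $\beta_0\in(1,2)$ falls in Case 1: all digits after position $N+1$ equal $1$, so the period block ends in $1$ once $n\ge 2$. It also occurs in Subcase 2 whenever $d_{k+2}=1$. The paper's proof has the same hole; it simply asserts that $\upsilon_n$ ``naturally'' satisfies (3) and (4).

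\textbf{The fallback and the bounded-$k$ argument.} Your fallback points in the right direction: the construction itself must be changed, for instance so that the period block never ends in the digit $1$. As written, though, it is not an argument. The letter $c$ is not specified. The strict inequalities in the proof of Lemma \ref{le:contract-periodic} were proved for the unaltered blocks, so self-admissibility of $\overline{d_1\cdots d_Lc}$, and then (3)--(4), must be re-established from scratch.

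Your first bullet also only shows that, for each fixed $K$, no violation with $k\le K$ occurs once $n$ is large. Violations with $k=k_n\to\infty$ but $k_n$ small relative to the period are covered by neither bullet.

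\textbf{What is sound.} Condition (2) follows correctly from the order topology. The convergence $\beta_n\to\beta_0$ via the strict monotonicity in Proposition \ref{prop:order-relation} is also correct, granted admissibility of the approximants. This last step genuinely differs from the paper, which compares topological entropies. Your monotonicity argument is cleaner and more rigorous: agreement of $\#B_i$ for $i\le n_k$ together with $h=\inf_i\frac1i\log\#B_i$ by itself only gives $\limsup_n\log\beta_n\le\log\beta_0$.
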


\begin{proof}
Let $\mathcal{F}$ denote the set of simple $(-\beta)$ numbers. For any $\beta\notin\mathcal{F}$, we need to find $\beta_n\in\mathcal{F}$ with $\lim_{n\to\infty}|\beta-\beta_n|=0$. Our stage to prove this is to  construct periodic $(-\beta)$-admissible sequences $\upsilon_n$ arbitrarily close to $\pi_{-\beta}(1)$. Then, to prove that for any $\varepsilon>0$, there exists $N(\varepsilon)=N$ such that  $|\beta-\beta_n|<\varepsilon$ for all $n\geq N$ where $\beta_n$ is the base such that negative $\beta_n$-expansion of $1$ is $\upsilon_n$.

Let $\beta\notin\mathcal{F}$ with $\pi_{-\beta}(1)=d_1d_2\cdots$ non-periodic.
By the proof of  Proposition \ref{prop:beta-expansion-1},
 we can construct self-admissible periodic sequences $\upsilon_n=\overline{d_1d_2\cdots d_{n_k}}$, where $n_k$ depends on $n$, seeing details in \eqref{eq:N-v-n-1}, \eqref{eq:v-2} and \eqref{eq:v-22}. 
We already have the self-admissible property. Now we turn to check  $\upsilon_n=\overline{d_1d_2\cdots d_{n_k}}$ satisfy all the other conditions in  Proposition \ref{prop:beta-expansion-1}.
Note that  $\beta\in(1,+\infty)$, $\pi_{-\beta}(1)$ satisfies condition (2) in Proposition \ref{prop:beta-expansion-1}, i.e.,
\[
\pi_{-\beta}(1) \succ w_1w_2\cdots := \lim_{n\to\infty} \phi^n(2) = 211222112112112221122\cdots.
\]
Thus denote by $l=l(\beta):=\min\{i:d_i \neq w_i\}$.
 Choosing $n$ large enough ensures that $k_n>l$. Then, $d_1d_2\cdots d_{k_n} \succ w_1w_2\cdots w_{k_n}$ and $\upsilon_n \succ \lim_{n\to\infty} \phi^n(2)$.

With $k_n>l$, $\upsilon_n=\overline{d_1d_2\cdots d_{k_n}}$ naturally satisfies (3) and (4) in Proposition \ref{prop:beta-expansion-1}. Then, we proved that $\upsilon_n$ is  $(-\beta_n)$-admissible for all $k_n>l$.

Now we are ready to prove $\beta_n$ can be arbitrarily close to $\beta$.

Since $\upsilon_n$ is $(-\beta)$-admissible, Proposition \ref{prop:beta-expansion-1} gives a unique $\beta_n\in(1,+\infty)$ with $\pi_{-\beta_n}(1)=\upsilon_n$. By the topological entropy definition in \eqref{eq:entropy} and Proposition \ref{prop:entropy}, we have
\[
h_{top}S_{-\beta_n}= \lim_{k\to\infty} \frac{1}{k} \log|B_k(S_{-\beta_n})|=\log\beta_n.
\]
 For any $n\geq1$ with $n_k>l$, the first $n_k$ items of both $\pi_{-\beta}$ and $\upsilon_n$ are $d_1\ldots d_{n_k}$. Then,  $\# B_i(S_{-\beta_n}) = \# B_i(S_{-\beta})$ for all $i\leq n_k$. Thus for any $\varepsilon>0$, there exists $N=N_\varepsilon \in\mathbb{N}$ such that $|h_{top}S_{-\beta_n}-h_{top}S_{-\beta}| =|\log\beta-\log\beta_n|< \varepsilon$ whenever $n \geq N$. So, we prove that $\lim_{n\to\infty}\beta_n=\beta$.
 Hence $\mathcal{F}$ is dense in $(1,+\infty)$.
\end{proof}

%
%
%
%

\section*{Acknowledgement}

\end{document}